\newtheorem{lm}{Lemma}[section]
\newtheorem{prop}[lm]{Proposition}
\newtheorem{coro}[lm]{Corollary}
\newtheorem{teo}[lm]{Theorem}
\theoremstyle{definition}
\newtheorem{oss}[lm]{Remark}
\newtheorem{defi}[lm]{Definition}
\newtheorem*{ack}{Acknowledgements}
\numberwithin{equation}{section}
\author[Brasco]{Lorenzo Brasco}
\address[L.\ Brasco]{Dipartimento di Matematica e Informatica
\newline\indent
Universit\`a degli Studi di Ferrara
\newline\indent
Via Machiavelli 35, 44121 Ferrara, Italy}
\email{lorenzo.brasco@unife.it}
\author[Lindgren]{Erik Lindgren}
\address[E. Lindgren]{Department of Mathematics, KTH -- Royal Institute of Technology
\newline\indent
100 44, Stockholm, Sweden}
\email{eriklin@math.kth.se}
\subjclass[2010]{35P30, 35A02, 35B65}
\keywords{$p-$Laplacian, nonlinear eigenvalue problems, Lane-Emden equation, weighted  Sobolev spaces, Poincar\'e-Sobolev constants.}
\dedicatory{To Peter Lindqvist, a gentleman and $p-$Laplacian master, on the occasion of his 70th birthday}
\title[Uniqueness of extremals]{Uniqueness of extremals\\ for some sharp Poincar\'e-Sobolev constants}
\begin{document}

\begin{abstract} 
We study the sharp constant for the embedding of $W^{1,p}_0(\Omega)$ into $L^q(\Omega)$, in the case $2<p<q$.
We prove that for smooth connected sets, when $q>p$ and $q$ is sufficiently close to $p$, extremal functions attaining the sharp constant are unique, up to a multiplicative constant. This in turn gives the uniqueness of solutions with minimal energy to the Lane-Emden equation, with super-homogeneous right-hand side.
\par
The result is achieved by suitably adapting a linearization argument due to C.-S. Lin. We rely on some fine estimates for solutions of $p-$Laplace--type equations by L. Damascelli and B. Sciunzi.
\end{abstract}

\maketitle
\begin{center}
\begin{minipage}{10cm}
\small
\tableofcontents
\end{minipage}
\end{center}

\section{Introduction} 

\subsection{Setting of the problem}
Let $\Omega\subset\mathbb{R}^N$ be an open set, for $1<p<\infty$ we denote by $\mathcal{D}^{1,p}_0(\Omega)$ the completion of $C^\infty_0(\Omega)$ with respect to the norm
\[
\varphi\mapsto \|\nabla \varphi\|_{L^p(\Omega)}.
\]
It is well-known that, under suitable assumptions on the set $\Omega$, the space $\mathcal{D}^{1,p}_0(\Omega)$ is continuously embedded into $L^q(\Omega)$, provided the exponent $q$ is such that
\[
1\le q\le p^*=\left\{\begin{array}{cc}
\dfrac{N\,p}{N-p},& \mbox{ if } p<N,\\
&\\
\mbox{ any finite exponent}, & \mbox{ if } p=N,\\
&\\
\infty,& \mbox{ if }p> N,
\end{array}
\right.
\]
see for example \cite[Chapter 15, Sections 4 \& 5]{Maz}. In this paper, we are interested in studying the sharp constant for this embedding, i.e., the quantity defined by
\[
\lambda_{p,q}(\Omega)=\inf_{u\in \mathcal{D}^{1,p}_0(\Omega)\setminus\{0\}}\frac{\displaystyle\int_\Omega |\nabla u|^p\, dx}{\displaystyle\left(\int_\Omega |u|^q\,dx\right)^\frac{p}{q}},
\]
when the exponent $q$ is {\it super-homogeneous} and {\it subcritical}, i.e., it satisfies
\[
p<q<p^*.
\] 
This problem can be rewritten in equivalent form as
\begin{equation}
\label{eq:pq}
\lambda_{p,q}(\Omega)=\inf_{u\in \mathcal{D}^{1,p}_0(\Omega)}\left\{\int_\Omega |\nabla u|^p\, dx\, :\, \int_\Omega |u|^q=1\right\}.
\end{equation}
We want to address the question of {\it uniqueness} of extremals for the variational problem \eqref{eq:pq}, provided the latter is well-posed (i.e., it admits a solution).
By uniqueness we mean uniqueness up to the choice of the sign, also referred to as {\it simplicity} of $\lambda_{p,q}(\Omega)$.
\par
We recall that the infimum in \eqref{eq:pq} is positive and it is actually a minimum, whenever $\Omega$ has finite volume. Indeed, in this case the embedding 
\[
\mathcal{D}^{1,p}_0(\Omega)\hookrightarrow L^q(\Omega),
\]
is compact\footnote{We point out that the variational problem \eqref{eq:pq} may be well-posed under more general assumptions on $\Omega$, allowing for suitable classes of unbounded sets, possibly with infinite volume. We do not insist on this point, since in any case our main result will hold for a class of {\it bounded} and {\it smooth} open sets.}, under the above restrictions on $q$. It is also useful to recall that in this case, we have that 
\[
\|\nabla \varphi\|_{L^p(\Omega)}\qquad \mbox{ and }\qquad \|\varphi\|_{W^{1,p}(\Omega)}:=\|\nabla \varphi\|_{L^p(\Omega)}+\|\varphi\|_{L^p(\Omega)},
\]
are equivalent norms on $C^\infty_0(\Omega)$. It is sufficient to observe that on an open set with finite volume we have the Poincar\'e inequality
\[
C_\Omega\,\int_\Omega |\varphi|^p\,dx\le \int_\Omega |\nabla \varphi|^p\,dx,\qquad \mbox{ for every } \varphi\in C^\infty_0(\Omega),
\]
at our disposal.
Thus, under the assumptions we will take on $\Omega$, the space $\mathcal{D}^{1,p}_0(\Omega)$ can be identified with the more common space $W^{1,p}_0(\Omega)$, defined as the closure of $C^\infty_0(\Omega)$ in the usual Sobolev space $W^{1,p}(\Omega)$. In what follows, we will always make this identification.
\par
Finally, it is plain to see that any minimizer $u$  of \eqref{eq:pq} solves the following quasilinear version of the {\it Lane-Emden equation} 
\begin{equation} 
\label{eq:eq}
-\Delta_p u =\lambda\,|u|^{q-2}\,u,\ \mbox{ in }\Omega,
\end{equation}
with $\lambda=\lambda_{p,q}(\Omega)$.

\subsection{Previous results}
The question tackled in this paper is quite classical, it is thus important to recall some existing results, so to put things into the right framework. 
\vskip.2cm\noindent
We start from the case of the Laplacian, i.\,e., we choose $p=2$. We refer the interested reader to \cite{BF} for a more comprehensive overview in this case. 
\par
It is known that for any open bounded connected set $\Omega\subset\mathbb{R}^N$, extremals are unique in the {\it sub-homogeneous regime}, i.e., when $1\le q< 2$, without any regularity assumption on $\Omega$. Actually, in this regime the result is much stronger, since one can infer uniqueness of positive solutions to equation \eqref{eq:eq}. This is a classical result by Brezis and Oswald, see \cite{BO}. By using this result and the fact that any extremal for $\lambda_{2,q}$ must have constant sign (see \cite[Proposition 2.3]{BF}), we get the simplicity property.
\par
In the limit case $q=2$, the quantity $\lambda_{2,2}(\Omega)$ is nothing but the first eigenvalue of the Dirichlet-Laplacian. Thus, we fall into the realm of Linear Spectral Theory, which guarantees again the simplicity property (see for example \cite[Theorem 1.2.5]{He}).
\par
When turning to the {\it super-homogeneous regime} $2^*>q>2$, the picture changes. Uniqueness of extremals is known to hold in balls (see for example \cite[Theorem 2 \& Corollary 1]{KL}) and for planar convex sets (see \cite[Theorem 1]{Lin} and also \cite[Theorem 4.5]{BF}). However, there are examples that show that extremals may not be unique. In  \cite[Proposition 1.2]{Naz} Nazarov shows that simplicity of $\lambda_{2,q}(\Omega)$ fails when $\Omega$ is a sufficiently thin spherical shell. Another counter-example can be found in \cite[Example 4.7]{BF}, where the authors consider a starshaped set consisting of two hypercubes overlapping in a small region near one corner. The example in \cite{BF} is greatly inspired by Dancer's fundamental contributions on multiplicity results for the Lane-Emden equation, see \cite{Da1, Da2}.
\vskip.2cm\noindent
For a general $1<p<\infty$, uniqueness of minimizers (up to multiplicity) holds true again for the {\it sub-homogeneous case} $1\le q<p$. This is a consequence of the stronger uniqueness result for positive solutions of \eqref{eq:eq} contained in \cite[Th\'eor\`eme 1]{DiSa} (see also \cite[Theorem 4]{IO}), which is the quasilinear counterpart of the result by Brezis and Oswald. When combining this result with the fact that extremals for $\lambda_{p,q}$ must have constant sign (see for example the proof of \cite[Theorem 1]{IO} or \cite[Theorem 1.2]{KLP}), we can infer simplicity.
\par
For $q=p$, the quantity $\lambda_{p,p}(\Omega)$ is the first eigenvalue of the $p-$Laplacian with Dirichlet conditions, then it is mandatory to refer to \cite{Lindqvist} for the relevant simplicity result. 
\par
For $p^*>q>p$, the situation is again different. Clearly, this is not a surprise, in light of the case $p=2$ previously discussed. As before, simplicity is known to be true in a ball (see \cite{AY}), but fails in general. The counter-example by Nazarov works in this case, too. We also refer to Kawohl's paper \cite{Kawohl} for the very same example. It is important to recall the precise structure of this counter-example: in \cite{Kawohl, Naz} it is observed that
{\it for every} $q>p$, there exists a sufficiently thin spherical shell such that simplicity for $\lambda_{p,q}$ fails. 
\vskip.2cm\noindent
Finally, the limiting case $q=p^*$ deserves a comment. It is well-known that in this case the variational problem \eqref{eq:pq} is well-defined only for $p>N$. The latter implies that the limit exponent is $q=\infty$ and simplicity is known to hold in bounded {\it convex} sets, as recently proved by Hynd and the second author, see \cite[Theorem 1.1]{HL}.
On the other hand, simplicity can fail already for starshaped sets, see \cite[Section 5]{HL}. It is not difficult to see that the very same counter-examples of \cite{HL} can be adapted to prove more generally that simplicity in starshaped sets fails already for $q$ large enough. Indeed, the counter-examples in \cite{HL} are given by suitable non-convex sets having two orthogonal axis of symmetry, for which one can prove that the extremals for $\lambda_{p,\infty}$ do not inherit the same symmetries. These examples include for instance a thin dumbbell domain or a thin bowtie-type domain, see \cite[Section 5]{HL}.
This lack of symmetry allows to produce at least two distinct extremals, by simply composing an extremal with a reflection. By an easy limit argument, one can show that the same phenomenon must happen for $\lambda_{p,q}$, when is $q$ finite and large enough.

\subsection{Main results}
The main result of this paper asserts that for $q>p$ and $q$ close enough to $p$, simplicity of $\lambda_{p,q}$ must hold, at least in sets which are regular enough. More precisely, we have the following:

\begin{teo}
\label{thm:main}
Let $p>2$ and let $\Omega\subset\mathbb{R}^N$ be an open bounded connected set, with $C^{1,\alpha}$ boundary, for some $0<\alpha<1$. There exists $\overline{q}=\overline{q}(N,p,\Omega)>p$ such that for every $p\le q< \overline{q}$ the extremals of \eqref{eq:pq} are unique, up to the choice of their sign.
\end{teo}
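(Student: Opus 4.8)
The plan is to implement a linearization/continuity argument in the spirit of C.-S. Lin, treating $q$ as a parameter near $p$ and arguing by contradiction. Suppose that for a sequence $q_n\downarrow p$ there exist two extremals $u_n,v_n$ of \eqref{eq:pq} for $\lambda_{p,q_n}(\Omega)$ which are not proportional. By the known sign property, extremals have constant sign, so we may take $u_n,v_n\geq 0$ and normalize them, say $\|u_n\|_{L^{q_n}}=\|v_n\|_{L^{q_n}}=1$; then $\int_\Omega|\nabla u_n|^p\,dx=\int_\Omega|\nabla v_n|^p\,dx=\lambda_{p,q_n}(\Omega)$. Both solve the Lane--Emden equation \eqref{eq:eq} with $\lambda=\lambda_{p,q_n}(\Omega)$. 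The first block of the argument is a convergence/compactness step: using that $q_n\to p$, that $|\Omega|<\infty$ (so the embeddings are compact and $\lambda_{p,q_n}(\Omega)\to\lambda_{p,p}(\Omega)$), and uniform $C^{1,\beta}_{\mathrm{loc}}$ (and up-to-the-boundary $C^{1,\alpha'}$) regularity for $p$-Laplace-type equations, one extracts subsequential limits $u_n\to u_\infty$, $v_n\to v_\infty$ in $C^1(\overline\Omega)$, where $u_\infty,v_\infty$ are nonnegative first eigenfunctions of the Dirichlet $p$-Laplacian. By Lindqvist's simplicity of $\lambda_{p,p}(\Omega)$ (cited in the excerpt) and the normalization, $u_\infty=v_\infty=:\phi$, the positive $L^p$-normalized first eigenfunction, which by Hopf-type boundary estimates satisfies $\phi>0$ in $\Omega$ and $|\nabla\phi|\geq c>0$ near $\partial\Omega$.

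The second, and central, block is the linearization. Set $w_n=u_n-v_n$; since $u_n,v_n$ are not proportional, $w_n\not\equiv 0$, and after rescaling we may assume $\|w_n\|_{L^\infty}=1$ (or use an $L^p$/energy normalization — the precise choice has to be made so that the limiting equation is nondegenerate). Subtracting the two equations and writing $-\Delta_p u_n+\Delta_p v_n$ as an integrated derivative along the segment $tu_n+(1-t)v_n$, we get that $w_n$ solves a linear equation of the form
\[
-\mathrm{div}\bigl(A_n(x)\nabla w_n\bigr)=\lambda_{p,q_n}(\Omega)\,B_n(x)\,w_n+R_n,
\]
where $A_n(x)\to (p-2)|\nabla\phi|^{p-4}\nabla\phi\otimes\nabla\phi+|\nabla\phi|^{p-2}\mathrm{Id}$ and $B_n(x)\to (p-1)\phi^{p-2}$, uniformly on compact subsets of $\{\nabla\phi\neq 0\}$. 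Here the degeneracy of the $p$-Laplacian at critical points of $\phi$ is the crux: the coefficient matrix $A_n$ degenerates where $\nabla u_n,\nabla v_n\to 0$. This is exactly where the fine estimates of Damascelli and Sciunzi enter — they give summability of $|\nabla u_n|^{-\gamma}$ (for appropriate $\gamma$, using $p>2$) and Hardy-type/weighted Sobolev bounds, ensuring that the critical set is negligible in the right weighted sense and that one may pass to the limit in the weighted Sobolev space adapted to the weight $|\nabla\phi|^{p-2}$. One then shows $w_n/\|w_n\|$ converges to a nontrivial solution $w_\infty$ of the linearized eigenvalue equation
\[
-\mathrm{div}\Bigl(|\nabla\phi|^{p-2}\bigl(\mathrm{Id}+(p-2)\tfrac{\nabla\phi\otimes\nabla\phi}{|\nabla\phi|^2}\bigr)\nabla w_\infty\Bigr)=(p-1)\,\lambda_{p,p}(\Omega)\,\phi^{p-2}\,w_\infty,
\]
in the appropriate weighted space, with a homogeneous Dirichlet (or natural) boundary condition; here the $\overline q$-closeness is used to absorb the remainder $R_n$, which carries the factor $q_n-p$ together with terms like $|u_n|^{q_n-2}u_n-|u_n|^{p-2}u_n$ that vanish as $q_n\to p$.

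The third block is to derive a contradiction from the existence of such a nontrivial $w_\infty$. Differentiating the eigenvalue relation or using that $\phi$ is the $L^p$-normalized first eigenfunction and $u_n,v_n$ share the same normalization, one gets an orthogonality constraint $\int_\Omega \phi^{p-1}w_\infty\,dx=0$ (roughly: $w_\infty$ is orthogonal to $\phi$ in the relevant weighted $L^2$-pairing), so $w_\infty$ cannot be a multiple of $\phi$. But the operator $L_\phi v:=-\mathrm{div}(|\nabla\phi|^{p-2}(\mathrm{Id}+(p-2)\nabla\phi\otimes\nabla\phi/|\nabla\phi|^2)\nabla v)$ is, by the second variation of the $p$-Dirichlet energy at $\phi$ subject to the $L^p$-mass constraint, nonnegative, and its kernel on the constraint is trivial — equivalently, $(p-1)\lambda_{p,p}(\Omega)$ is \emph{not} an eigenvalue of $L_\phi$ on the subspace $\{\int\phi^{p-1}v=0\}$, because simplicity and nondegeneracy of the first $p$-eigenvalue (again Lindqvist, plus the strict convexity coming from $p>2$ in the directions transversal to $\nabla\phi$) force the only solution there to be $w_\infty\equiv 0$. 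This contradicts $\|w_\infty\|=1$, and the contradiction proves the theorem.

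I expect the main obstacle to be the passage to the limit in the linearized equation across the critical set of $\phi$ (and of $u_n,v_n$): making rigorous the convergence $A_n\nabla w_n\to |\nabla\phi|^{p-2}(\cdots)\nabla w_\infty$ in $L^1$, establishing that $w_\infty$ lies in the correct weighted Sobolev space and satisfies the limiting equation with no spurious measure concentrated on $\{\nabla\phi=0\}$, and controlling the remainder $R_n$ uniformly. This is precisely the step where the quantitative gradient estimates of Damascelli and Sciunzi (summability of negative powers of $|\nabla u|$ for $p>2$, and the associated weighted Poincaré/Sobolev inequalities) are indispensable; everything else — the compactness to $\phi$, the orthogonality, and the final spectral argument — is comparatively standard once that weighted functional-analytic framework is in place.
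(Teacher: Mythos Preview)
Your strategy matches the paper's: argue by contradiction, take $q_n\downarrow p$, use $C^1(\overline\Omega)$--convergence of $u_n,v_n$ to the first $p$--eigenfunction $u$, linearize by subtracting the two Euler--Lagrange equations, and use the Damascelli--Sciunzi integrability of negative powers of $|\nabla u_n|$ to pass to the limit in the weighted space $X^{1,2}_0(\Omega;|\nabla u|^{p-2})$. Two comments.

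First, the remainder $R_n$ in your linearized equation is spurious. Subtracting the two equations and writing \emph{both} sides as an exact integral along the segment $t\mapsto t\,u_n+(1-t)\,v_n$ (the paper's formulas \eqref{gradienti}--\eqref{funzioni}) gives an identity with no error term; the exponent $q_n$ sits inside the weight $B_n$ (the paper's $w_n$), not in a separate remainder. Nothing needs to be ``absorbed''.

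Second, your endgame differs from the paper's and is arguably cleaner. You observe that $\|u_n\|_{L^{q_n}}=\|v_n\|_{L^{q_n}}$ forces, after dividing by $\|u_n-v_n\|$ and passing to the limit, $\int_\Omega u^{p-1}\,w_\infty\,dx=0$, which is incompatible with $w_\infty$ being a nonzero multiple of $u$. The paper instead proves (Lemma~\ref{lm:nonempty}, via Picone's inequality for the $p$--Laplacian) that $u_n-v_n$ must change sign, and then uses the uniform weighted Sobolev inequality to get a lower bound on $|\{\pm\phi_n>0\}|$ independent of $n$, contradicting strong $L^2$ convergence to the sign-definite limit. Both routes, however, rest on the same spectral fact: that $(p-1)\,\lambda_p(\Omega)$ is the first eigenvalue of the linearized operator and is simple, with eigenspace spanned by $u$ (Propositions~\ref{prop:primo} and~\ref{prop:unique}). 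This is \emph{not} Lindqvist's simplicity for the nonlinear problem; it is a separate weighted Picone argument carried out in $X^{1,2}_0(\Omega;|\nabla u|^{p-2})$, and your sketch glosses over it. You should isolate and prove it as a standalone lemma.
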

\begin{oss}
In light of the counter-examples discussed above, the previous result is essentially optimal. Indeed, these show that we cannot expect uniqueness to hold for all $q>p$, without any assumptions on the set $\Omega$. It is sufficient to think to the case of the spherical shell. However, we are not able to say whether the $C^{1,\alpha}$ regularity of the boundary is really necessary or not: we will try to explain in the next subsection what are the difficulties in removing this assumption. Finally, we recall that for $p=2$, the very same result of Theorem \ref{thm:main} is true for every open bounded connected set $\Omega$ and even for more general open sets, without any regularity assumption on the boundary, see \cite[Proposition 4.3]{BF}.
\end{oss}
The result of Theorem \ref{thm:main} in turn implies a uniqueness result for the solutions of \eqref{eq:eq} having minimal energy. More precisely, for $\lambda>0$ let us introduce the energy functional
\[
\mathfrak{F}_{q,\lambda}(\varphi)=\frac{1}{p}\,\int_\Omega |\nabla \varphi|^p\,dx-\frac{\lambda}{q}\,\int_\Omega |\varphi|^q\,dx,\qquad \mbox{ for every } \varphi\in W^{1,p}_0(\Omega),
\]
which is naturally associated to \eqref{eq:eq}. In particular, we observe that $u$ is a solution of \eqref{eq:eq} if and only if it is a critical point of $\mathfrak{F}_{q,\lambda}$. Moreover, for a critical point $u$, it is easily seen that
\[
\mathfrak{F}_{q,\lambda}(u)=\left(\frac{\lambda}{p}-\frac{\lambda}{q}\right)\,\int_\Omega |u|^q\,dx.
\]
It is sufficient to test the weak formulation of \eqref{eq:eq} with $u$ itself.
\begin{defi}
With the notation above, we will say that $u\in W^{1,p}_0(\Omega)$ is a {\it nontrivial solution of \eqref{eq:eq} with minimal energy} if 
\[
\int_\Omega |u|^q\,dx=\inf\left\{\int_\Omega |v|^q\,dx\, :\, v\in W^{1,p}_0(\Omega)\setminus\{0\} \mbox{ is a critical point of } \mathfrak{F}_{q,\lambda}\right\}.
\]
\end{defi}
We then obtain the following
\begin{coro}
\label{coro:positive}
Let $\lambda>0$, with the notation and assumptions of Theorem \ref{thm:main}, for every $p< q<\overline{q}$ there exists a unique nontrivial solution of \eqref{eq:eq} with minimal energy, up to the choice of the sign.
\end{coro}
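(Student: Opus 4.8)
The plan is to deduce Corollary \ref{coro:positive} from Theorem \ref{thm:main} by a scaling argument identifying the nontrivial critical points of $\mathfrak{F}_{q,\lambda}$ with minimal $L^q$ norm with the extremals of \eqref{eq:pq}. First I would record the scaling identity satisfied by critical points: if $v\in W^{1,p}_0(\Omega)\setminus\{0\}$ is a critical point of $\mathfrak{F}_{q,\lambda}$, i.e.\ a nontrivial weak solution of \eqref{eq:eq}, then testing the equation with $v$ itself gives $\int_\Omega|\nabla v|^p\,dx=\lambda\int_\Omega|v|^q\,dx$; dividing by $\left(\int_\Omega|v|^q\,dx\right)^{p/q}$ and using $q>p$ one obtains
\[
\int_\Omega |v|^q\,dx=\left(\frac{1}{\lambda}\,\frac{\displaystyle\int_\Omega|\nabla v|^p\,dx}{\displaystyle\left(\int_\Omega|v|^q\,dx\right)^{p/q}}\right)^{\frac{q}{q-p}}.
\]
Thus, on the set of nontrivial critical points of $\mathfrak{F}_{q,\lambda}$, the quantity $\int_\Omega|v|^q\,dx$ is a strictly increasing function of the Rayleigh quotient of $v$; hence (also recalling the formula $\mathfrak{F}_{q,\lambda}(v)=(\lambda/p-\lambda/q)\int_\Omega|v|^q\,dx$ on critical points, with $\lambda/p-\lambda/q>0$) a nontrivial critical point has minimal energy precisely when it minimizes the Rayleigh quotient among all nontrivial critical points.

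Next I would identify this minimal value of the Rayleigh quotient with $\lambda_{p,q}(\Omega)$. By definition of $\lambda_{p,q}(\Omega)$, every nontrivial critical point has Rayleigh quotient $\ge\lambda_{p,q}(\Omega)$. Conversely, since $\Omega$ is bounded the embedding $W^{1,p}_0(\Omega)\hookrightarrow L^q(\Omega)$ is compact, so \eqref{eq:pq} admits a minimizer $w$, which we normalize by $\int_\Omega|w|^q\,dx=1$; then $w$ solves \eqref{eq:eq} with $\lambda_{p,q}(\Omega)$ in place of $\lambda$, and a direct computation shows that $c\,w$, with $c=\left(\lambda_{p,q}(\Omega)/\lambda\right)^{1/(q-p)}>0$, is a nontrivial critical point of $\mathfrak{F}_{q,\lambda}$ whose Rayleigh quotient equals that of $w$, namely $\lambda_{p,q}(\Omega)$. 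Combining the two steps: $u\in W^{1,p}_0(\Omega)\setminus\{0\}$ is a nontrivial solution of \eqref{eq:eq} with minimal energy if and only if its Rayleigh quotient equals $\lambda_{p,q}(\Omega)$, i.e.\ if and only if $u/\left(\int_\Omega|u|^q\,dx\right)^{1/q}$ is an extremal of \eqref{eq:pq}.

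Finally I would invoke Theorem \ref{thm:main}: for $p<q<\overline{q}$ the extremals of \eqref{eq:pq} are unique up to the sign, so $u/\left(\int_\Omega|u|^q\,dx\right)^{1/q}=\pm\widetilde w$ for a fixed extremal $\widetilde w$, while the displayed identity forces $\int_\Omega|u|^q\,dx=\left(\lambda_{p,q}(\Omega)/\lambda\right)^{q/(q-p)}$, a value independent of $u$. Hence $u=\pm\left(\lambda_{p,q}(\Omega)/\lambda\right)^{1/(q-p)}\widetilde w$ is uniquely determined up to its sign, and existence is clear because the function $c\,w$ constructed above is one such solution. I do not expect any genuinely hard step here: once Theorem \ref{thm:main} is available, the only points requiring care are the bookkeeping of the scaling exponents (where $q>p$ is repeatedly used) and checking that the infimum defining the minimal energy is actually attained, precisely at the rescaled extremals — both of which follow at once from the identity above and from the existence of a minimizer for \eqref{eq:pq}.
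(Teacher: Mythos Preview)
Your proof is correct and follows essentially the same approach as the paper: both arguments use the identity $\int_\Omega|\nabla v|^p\,dx=\lambda\int_\Omega|v|^q\,dx$ for critical points to show that minimal-energy solutions are exactly those whose Rayleigh quotient equals $\lambda_{p,q}(\Omega)$, then construct the explicit rescaling $\left(\lambda_{p,q}(\Omega)/\lambda\right)^{1/(q-p)}w$ of an extremal $w$ of \eqref{eq:pq} to identify the infimum and invoke Theorem~\ref{thm:main} for uniqueness up to sign. Your write-up is slightly more detailed in spelling out the monotone dependence of $\int_\Omega|v|^q\,dx$ on the Rayleigh quotient, but the substance is the same.
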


\subsection{Some comments on the proof}
The proof of Theorem \ref{thm:main} is largely inspired by that of \cite[Lemma 3]{Lin}, dealing with the case of the Laplacian, i.e., $p=2$ and $q>2$. The result in \cite{Lin} is actually stronger, as it permits to infer uniqueness of positive solutions to \eqref{eq:eq} for $q$ sufficiently close to $2$ and not only for extremals of $\lambda_{2,q}$. This is achieved under the assumption that $\Omega$ is a smooth\footnote{The smoothness hypothesis in not explicitly stated in \cite[Lemma 3]{Lin}. However, a closer inspection of its proof reveals that this is needed in the argument which uses the moving plane method, at the beginning of page 17 there.} open bounded and {\it convex} set.
\par
The proof of \cite{Lin} proceeds by contradiction and it is based on a linearization argument. In order to clarify the contents of our paper, let us try to sketch the idea of \cite[Lemma 3]{Lin}, by sticking for the moment to the simpler case of extremals for $\lambda_{2,q}$. 
\par
By assuming that simplicity fails for every $q>2$, there must exist a sequence $\{q_n\}_{n\in\mathbb{N}}$ such that $q_n\searrow 2$ and $\lambda_{2,q_{n}}(\Omega)$ admits two distinct positive minimizers $u_n$ and $v_n$. Their difference has the following properties:
\begin{itemize}
\item $u_n-v_n$ is sign-changing on $\Omega$;
\vskip.2cm
\item $u_n-v_n$ solves the {\it linearized} equation
\begin{equation}
\label{linear}
-\Delta \psi=\lambda_{2,q_n}(\Omega)\,V_n\,\psi,\qquad \mbox{ where } V_n:=(q_n-1)\,\int_0^1 ((1-t)\,v_n+t\,u_n)^{q_n-2}\,dt.
\end{equation}
\end{itemize} 
Moreover, by using the minimality, it is not difficult to see that both $u_n$ and $v_n$ must converge to a first positive eigefunction of the Dirichlet-Laplacian, with unit $L^2$ norm. By further using that
\[
\lambda_{2,q_n}(\Omega)\to \lambda_{2,2}(\Omega)\qquad \mbox{ and }\qquad V_n\to 1,
\]
we get that the rescaled difference
\[
\phi_n:=\frac{u_n-v_n}{\|u_n-v_n\|_{L^2(\Omega)}},
\]
converges to a non-trivial limit function $\phi$, which can be proved to be a {\it sign-changing first eigenfunction} of the Dirichlet-Laplacian. This gives the desired contradiction, since first eigenfunctions must have constant sign.
\par
The general case of positive solutions to \eqref{eq:eq} is more complicated, since obtaining that solutions must converge to a first Dirichlet eigenfunction requires some nontrivial a priori ``universal'' estimates, i.e., estimates which are uniform as $q$ converges to $2$. This is tackled by means of an ingenious trick, which exploits the moving plane method: it is only here that the smoothness and convexity assumptions come into play in \cite{Lin}. We also refer to \cite[Theorem 5]{Da2} and \cite[Theorem 4.1]{DGP} for a similar uniqueness result, for some classes of planar sets (symmetric but not necessarily convex).
\vskip.2cm\noindent
The transposition of this method to the case of the $p-$Laplacian is bound to immediately face some huge obstructions, already in the simpler case of extremals for $\lambda_{p,q}$. This is mainly due to the nonlinearity of the $p-$Laplacian: in this case, we can say that the difference $u_n-v_n$ of two positive extremals for $\lambda_{p,q_n}(\Omega)$ solves the linearized equation
\begin{equation}
\label{}
-\mathrm{div}(A_n\,\nabla \psi)=\lambda_{p,q_n}(\Omega)\,V_n\,\psi,
\end{equation}
where $V_n$ is as in \eqref{linear}, but now we have the coefficient matrix $A_n$ which is {\it degenerate elliptic}. More precisely, we have
\[
\frac{1}{C}\, \Big(|\nabla u_n|^{p-2}+|\nabla v_n|^{p-2}\Big)\,|\xi|^2\le \langle A_n\,\xi,\xi\rangle\le C\, \Big(|\nabla u_n|^{p-2}+|\nabla v_n|^{p-2}\Big)\,|\xi|^2,\quad \mbox{ for every }\xi\in\mathbb{R}^N.
\]
Now, inferring some suitable compactness for the rescaled sequence 
\[
\phi_n:=\frac{u_n-v_n}{\|u_n-v_n\|_{L^2(\Omega)}},
\]
and passing to the limit in the linearized equation above is quite problematic. 
It is precisely here that we need global regularity informations on the functions $u_n$ and $v_n$, which in turn call into play for some assumptions on the boundary of $\Omega$. More precisely, we need to know that $|\nabla u_n|$
 and $|\nabla v_n|$ are still integrable, even when raised to some suitable {\it negative} powers.
\par
This in turn permits to obtain a compact embedding in some $L^t$ space, for weighted Sobolev spaces of functions such that
\[
\int_\Omega \left(|\nabla u_n|^{p-2}+|\nabla v_n|^{p-2}\right)\, |\nabla \varphi|^2\,dx<+\infty.
\] 
Here we make use of some striking results proved by Damascelli and Sciunzi in \cite{DS}. Unfortunately, the results in \cite{DS} are stated for positive solutions of a Lane-Emden--type equation
\[
-\Delta_p u=f(u),
\] 
without explicit a priori estimates, thus it is not clear whether these regularity estimates hold uniformly or not, as $n$ goes to $\infty$. In the same way, the embedding results are stated for a positive solution, without making precise in the statement how these embeddings depend on the solution itself. Here as well, we have to guarantee that both the embedding constant and the target space are stable, as $n$ goes to $\infty$.
\par
For these reasons, a non-negligible part of the paper is devoted to reprove these results. We claim no originality here, however it is mandatory to go through the proofs of \cite{DS} and carefully check that all the regularity estimates and the weighted embeddings hold {\it uniformly} for the family of solutions $\{u_n\}_{n\in\mathbb{N}}, \{v_n\}_{n\in\mathbb{N}}$.  
\par
This will show that it is possible to pass to the limit in the linearized equation. Then the conclusion of the proof is similar to that of \cite{Lin} exposed above: we will obtain convergence to a non-trivial limit function $\phi$, which can be shown again to be a {\it sign-changing first eigenfunction} of a certain {\it weighted} linear eigenvalue problem. We will show that this is a contradiction, by means of a suitable weighted Picone--type identity.
\vskip.2cm\noindent
Finally, we humbly admit that we have not been able to extend our result to the more general case of positive solutions of \eqref{eq:eq}. This seems quite a challenging task, which we plan to tackle in the future. 

\subsection{Plan of the paper}
In Section \ref{sec:prel} we prove some preliminary facts, which will be needed for the linearization argument previously discussed. In particular, we devote this section to prove uniform (with respect to $q$) regularity estimates for positive extremals of \eqref{eq:pq}, as well as to discuss the stability of weighted embeddings, with respect to a varying weight. Here we need to go through the proofs of \cite{DS}.
\par
In Section \ref{sec:3}, we analyze the first eigenvalue of a certain weighted linear eigenvalue problem. This is a crucial ingredient for the proof of our main result. We prove in particular the existence of a first eigenfunction and its uniqueness, up to a multiplicative constant.
\par
The central part of the paper is then Section \ref{sec:proof}, where we prove Theorem \ref{thm:main}, along the lines detailed above. This section also contains the proof of Corollary \ref{coro:positive}.
\par
Finally, we include three appendices: Appendix \ref{sec:A} and Appendix \ref{sec:B} both contain some technical facts, while Appendix \ref{sec:C} contains the crucial regularity results by Damascelli and Sciunzi, with a uniform control on the relevant a priori estimates.

\begin{ack}
We wish to thank Giovanni Franzina, who first drew our attention on Lin's paper \cite{Lin}. L.B. wants to thank Giulio Ciraolo for first introducing him to the results by Damascelli and Sciunzi, some years ago. We are grateful to Vladimir Bobkov and Grey Ercole for pointing out the papers  \cite{Ta} and \cite{Er2}, respectively.  
\par 
Part of this work has been done during a visit of E.\,L. to Bologna in October 2018 and a visit of L.\,B. to Stockholm in March 2019. Hosting institutions are gratefully acknowledged. Erik Lindgren was supported by the Swedish Research Council, grant no. 2017-03736. 
\end{ack}

\section{Preliminaries}
\label{sec:prel}

\subsection{Notation}
Here we briefly fix some notations that we are going to use throughout the paper.
We will indicate by $\omega_N$ the measure of the $N-$dimensional open ball, with radius $1$.
\par
If $u\in L^1_{\rm loc}(\mathbb{R}^N)$, we set 
\[
u_+:=\max\{u,0\}\qquad \mbox{ and }\qquad u_-:=\max\{-u,0\}.
\]
For an $N\times N$ matrix $A=(a_{i,j})_{i,j=1}^N$ with real coefficients, we will consider its norm
\[
|A|=\left(\sum_{i,j=1}^N |a_{i,j}|^2\right)^\frac{1}{2}.
\] 

\subsection{Basic properties}

We start with a well-known result, i.e. the fact that minimizers of \eqref{eq:pq} have constant sign.
We give here an elementary proof. Observe that the result holds true regardless of the fact that $\Omega$ is connected or not and no regularity assumptions are needed. For simplicity, we assume $\Omega$ to have finite volume: as explained in the introduction, this is a sufficient condition to have well-posedness of \eqref{eq:pq}.
\begin{lm}
Let $1<p<q<p^*$ and let $\Omega\subset\mathbb{R}^N$ be an open set, with finite volume. Then every solution of \eqref{eq:pq} must have constant sign.
\end{lm}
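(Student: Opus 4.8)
The plan is to argue by contradiction, or more directly by a symmetrization/truncation argument. Suppose $u$ is a minimizer of \eqref{eq:pq} which changes sign, so that both $u_+$ and $u_-$ are nonzero. The key observation is that $|\nabla u|^p = |\nabla u_+|^p + |\nabla u_-|^p$ almost everywhere (since $u_+$ and $u_-$ have disjoint supports up to the set $\{u=0\}$, on which $\nabla u = 0$ a.e.), and likewise $|u|^q = u_+^q + u_-^q$. First I would record that $u_+, u_- \in \mathcal{D}^{1,p}_0(\Omega)$, which follows from standard truncation results for Sobolev functions.

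Next I would exploit the super-homogeneity $q > p$ to get a strict inequality. Set $a = \int_\Omega |\nabla u_+|^p\,dx > 0$, $b = \int_\Omega |\nabla u_-|^p\,dx > 0$, and $A = \int_\Omega u_+^q\,dx > 0$, $B = \int_\Omega u_-^q\,dx > 0$. By definition of $\lambda_{p,q}(\Omega)$ applied to the admissible competitors $u_+$ and $u_-$ separately, we have $a \ge \lambda_{p,q}(\Omega)\, A^{p/q}$ and $b \ge \lambda_{p,q}(\Omega)\, B^{p/q}$. On the other hand, $u$ being a minimizer gives
\[
a + b = \int_\Omega |\nabla u|^p\,dx = \lambda_{p,q}(\Omega)\left(\int_\Omega |u|^q\,dx\right)^{p/q} = \lambda_{p,q}(\Omega)\,(A+B)^{p/q}.
\]
Combining these, $\lambda_{p,q}(\Omega)\,(A+B)^{p/q} = a + b \ge \lambda_{p,q}(\Omega)\left(A^{p/q} + B^{p/q}\right)$, hence $(A+B)^{p/q} \ge A^{p/q} + B^{p/q}$. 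But since $0 < p/q < 1$ and $A, B > 0$, the strict subadditivity of the map $t \mapsto t^{p/q}$ forces $(A+B)^{p/q} < A^{p/q} + B^{p/q}$, a contradiction. Therefore one of $u_+$, $u_-$ must vanish identically, i.e. $u$ has constant sign.

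The main point requiring care — though it is routine — is the identity $|\nabla u|^p = |\nabla u_+|^p + |\nabla u_-|^p$ a.e. and the membership $u_\pm \in \mathcal{D}^{1,p}_0(\Omega)$; these rely on the fact that $\nabla u = 0$ a.e. on $\{u = 0\}$ and on the density definition of $\mathcal{D}^{1,p}_0(\Omega)$ together with the finite-volume assumption (which makes $\mathcal{D}^{1,p}_0(\Omega) = W^{1,p}_0(\Omega)$, so that truncations stay in the space). I expect no genuine obstacle here: the argument is purely variational and uses only the strict concavity of $t\mapsto t^{p/q}$, which is exactly where the hypothesis $q > p$ enters. Note that connectedness and boundary regularity of $\Omega$ play no role, consistent with the statement.
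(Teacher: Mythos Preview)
Your proof is correct. Both your argument and the paper's hinge on the same concavity of $t\mapsto t^{p/q}$ for $p/q<1$, but the mechanisms differ slightly. The paper first uses the Euler--Lagrange equation: testing the equation for $u$ with $\varphi=u_+$ gives the \emph{exact} identity $\int_\Omega |\nabla u_+|^p\,dx=\lambda_{p,q}(\Omega)\int_\Omega u_+^q\,dx$, and then the single-variable inequality $t\le t^{p/q}$ for $t\in[0,1]$ (applied to $t=\int_\Omega u_+^q\,dx$) forces $\|u_+\|_{L^q}=1$, hence $u=u_+$. Your route is purely variational: you never invoke the equation, and instead compare $u_\pm$ directly against the infimum and use strict subadditivity of $t\mapsto t^{p/q}$ on the two pieces. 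Your version has the mild advantage of not needing the Euler--Lagrange equation at all; the paper's version is marginally shorter since it handles only one piece $u_+$ rather than both. The underlying idea is the same.
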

\begin{proof} Let $u\in W^{1,p}_0(\Omega)$ be a solution of \eqref{eq:pq} and let us suppose that $u_+\not\equiv 0$.
By minimality, we have that $u$ verifies
\[
\int_\Omega \langle |\nabla u|^{p-2}\,\nabla u,\nabla \varphi\rangle\,dx=\lambda_{p,q}(\Omega)\,\int_\Omega |u|^{q-2}\,u\,\varphi\,dx,\qquad \mbox{ for every } \varphi\in W^{1,p}_0(\Omega).
\] 
By choosing the test function $\varphi=u_+$, we get
\[
\int_\Omega |\nabla u_+|^p\,dx=\lambda_{p,q}(\Omega)\,\int_\Omega u_+^q\,dx.
\]
Thanks to the fact that $p/q<1$ and that $u$ has unitary $L^q$ norm, we have 
\begin{equation}
\label{normetta}
\int_\Omega u_+^q\,dx\le \left(\int_\Omega u_+^q\,dx\right)^\frac{p}{q},
\end{equation}
with strict inequality, unless the $L^q$ norm of $u_+$ is $1$. The last two equations imply that we must have
\[
\lambda_{p,q}(\Omega)\le \frac{\displaystyle \int_\Omega |\nabla u_+|^p\,dx}{\displaystyle\left(\int_\Omega |u_+|^q\,dx\right)^\frac{p}{q}}\le \frac{\displaystyle \int_\Omega |\nabla u_+|^p\,dx}{\displaystyle\int_\Omega |u_+|^q\,dx}=\lambda_{p,q}(\Omega).
\]
Thus equality must hold everywhere, in particular equality in \eqref{normetta} gives that $u_+$ has unitary $L^q$ norm. By recalling the normalization taken on $u$, this implies that $u=u_+$. This gives the desired conclusion.
\end{proof}
The following technical lemma holds for positive solutions of \eqref{eq:eq}, not necessarily minimizers of \eqref{eq:pq}. It states that if there are two different solutions of \eqref{eq:eq}, their difference must change sign. Here as well, no regularity assumptions on $\Omega$ are needed.
\begin{lm}
\label{lm:nonempty}
Let $1<p<q<p^*$ and let $\Omega\subset\mathbb{R}^N$ be an open connected set, with finite volume. Let  $\lambda>0$ and let $u,v\in W^{1,p}_0(\Omega)$ be two distinct positive solutions of the Lane-Emden equation \eqref{eq:eq}. Then we must have
\[
|\{x\in\Omega\, :\, u(x)>v(x)\}|>0\qquad \mbox{ and }\qquad |\{x\in\Omega\, :\, u(x)<v(x)\}|>0.
\]
In other words, the difference $u-v$ must change sign in $\Omega$.
\end{lm}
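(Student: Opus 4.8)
The plan is to argue by contradiction, assuming that $u\not\equiv v$ but $u\ge v$ on all of $\Omega$ (the case $u\le v$ being symmetric), and to derive a contradiction from a Picone-type comparison combined with the strong maximum principle. First I would record the weak formulations: for every $\varphi\in W^{1,p}_0(\Omega)$,
\[
\int_\Omega \langle |\nabla u|^{p-2}\nabla u,\nabla\varphi\rangle\,dx=\lambda\int_\Omega u^{q-1}\varphi\,dx,\qquad
\int_\Omega \langle |\nabla v|^{p-2}\nabla v,\nabla\varphi\rangle\,dx=\lambda\int_\Omega v^{q-1}\varphi\,dx.
\]
Since $u,v$ are positive solutions of \eqref{eq:eq} on a connected domain, by the Harnack inequality and the strong maximum principle for the $p$-Laplacian they are strictly positive in $\Omega$ and locally bounded away from zero on compact subsets; this is what will make the test functions below admissible (at least after a standard truncation/approximation argument near $\partial\Omega$).

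The key step is a Picone-type inequality. I would use the test function $\varphi_1 = u - v^{p}/u^{p-1}$ in the equation for $u$ and $\varphi_2 = v - u^{p}/v^{p-1}$ in the equation for $v$ — or, more cleanly, the anisotropic Picone inequality of Allegretto–Huang type, which states that for positive, sufficiently regular $a,b$,
\[
|\nabla b|^p \;\ge\; \left\langle |\nabla a|^{p-2}\nabla a,\; \nabla\!\left(\frac{b^p}{a^{p-1}}\right)\right\rangle,
\]
with equality (when $a$ is $p$-harmonic-type) forcing $b = c\,a$ for a constant $c$. Applying this with $(a,b)=(u,v)$ and with $(a,b)=(v,u)$, plugging $b^p/a^{p-1}$ as test function into the equation for $a$, and adding the two resulting inequalities, the gradient terms are dominated by $\int_\Omega(|\nabla u|^p+|\nabla v|^p)$ while the right-hand sides combine to
\[
\lambda\int_\Omega \big(u^{q-1} - v^{q-1}\big)\left(\frac{v^p}{u^{p-1}} - \frac{u^p}{v^{p-1}}\right)dx
\;=\;-\,\lambda\int_\Omega \big(u^{q-1}-v^{q-1}\big)\,\frac{u^{2p-1}-v^{2p-1}}{(uv)^{p-1}}\,dx,
\]
up to rearrangement. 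Since $q>p>1$ and we are assuming $u\ge v$, both factors $u^{q-1}-v^{q-1}$ and $u^{2p-1}-v^{2p-1}$ have the same sign pointwise, so this integral is $\le 0$, giving $0\le(\text{something})\le 0$; hence every inequality is an equality. The equality case in Picone then forces $v=c\,u$ for some constant $c\in(0,1]$, and substituting $v=cu$ back into \eqref{eq:eq} yields $c^{p-1}=c^{q-1}$, so $c=1$ because $q\ne p$. This contradicts $u\not\equiv v$.

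The main obstacle I anticipate is justifying the use of $v^p/u^{p-1}$ (and $u^p/v^{p-1}$) as admissible test functions in $W^{1,p}_0(\Omega)$: near $\partial\Omega$ both $u$ and $v$ vanish, and the ratios could a priori blow up. The clean fix is boundary regularity — since we are free to use that $u,v$ are positive weak solutions and, under mild hypotheses, the Hopf lemma for the $p$-Laplacian gives $|\nabla u|, |\nabla v|$ bounded below near $\partial\Omega$ so that $u/v$ and $v/u$ extend continuously up to the boundary — but for this lemma, which explicitly assumes no regularity on $\Omega$, I would instead truncate: work with $\min\{v,M\}$, $\min\{u,M\}$, or with $(u-\varepsilon)_+$, $(v-\varepsilon)_+$, prove the Picone inequality at that level, and pass to the limit using monotone/dominated convergence and the fact that $\int_\Omega(|\nabla u|^p+|\nabla v|^p)<\infty$. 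One must check that no boundary terms survive this limit; this is the only genuinely technical point, and it is standard for Picone-type arguments. Everything else — the pointwise sign analysis of the right-hand side and the equality-case discussion — is elementary given $q>p$.
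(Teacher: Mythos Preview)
Your overall strategy---a Picone comparison to rule out an ordering $u\ge v$---is exactly the right one and is what the paper does. However, the symmetrized version you describe does not work. If you plug $v^p/u^{p-1}$ into the equation for $u$ and $u^p/v^{p-1}$ into the equation for $v$, apply Picone on each, and add, the correct outcome (after using $\int|\nabla u|^p=\lambda\int u^q$ and the analogous identity for $v$) is
\[
0\;\le\;\lambda\int_\Omega\big(u^{q-p}-v^{q-p}\big)\big(u^{p}-v^{p}\big)\,dx .
\]
Your factorization with $(u^{q-1}-v^{q-1})(u^{2p-1}-v^{2p-1})/(uv)^{p-1}$ is not what comes out; more importantly, the integrand $(u^{q-p}-v^{q-p})(u^{p}-v^{p})$ is pointwise nonnegative for \emph{all} $u,v>0$, so the displayed inequality is a tautology and cannot force equality in Picone. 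In other words, by adding the two one-sided estimates you have erased the asymmetry that carries the information.

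The repair is simply to use only one side. Testing the equation for $u$ with (an approximation of) $v^p/u^{p-1}$ and applying Picone gives
\[
\lambda\int_\Omega u^{q-p}\,v^p\,dx\;\le\;\int_\Omega|\nabla v|^p\,dx\;=\;\lambda\int_\Omega v^q\,dx,
\qquad\text{i.e.}\qquad
\int_\Omega\big(u^{q-p}-v^{q-p}\big)\,v^{p}\,dx\;\le\;0.
\]
Under the contradiction hypothesis $u\ge v$ and $q>p$, the integrand is nonnegative, and since $v>0$ on the connected set $\Omega$ by the strong minimum principle, one concludes $u=v$ a.e., contradicting distinctness. This is precisely the paper's argument; the approximation $v_n^p/(u+\varepsilon)^{p-1}$ with $v_n\in C^\infty_0(\Omega)$, $v_n\to v$ in $W^{1,p}_0(\Omega)$, followed by $\varepsilon\downarrow 0$ via Fatou, handles the admissibility issue you correctly flagged, without any boundary regularity. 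Note also that no equality-case analysis of Picone is needed: the contradiction comes directly from the sign of the final integrand, not from deducing $v=cu$.
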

\begin{proof}
We argue by contradiction and suppose, for example, that
\[
|\{x\in\Omega\, :\, u(x)<v(x)\}|=0.
\]
Thus we are assuming that
\begin{equation}
\label{over}
u(x)\ge v(x)\ \mbox{ for a.\,e. }x\in\Omega\qquad \mbox{ and }\qquad |\{x\in\Omega\, :\, u(x)>v(x)\}|>0.
\end{equation}
Let $\{v_n\}_{n\in\mathbb{N}}\subset C^\infty_0(\Omega)$ be a sequence such that
\[
\lim_{n\to\infty} \|\nabla v_n-\nabla v\|_{L^p(\Omega)}=0,
\]
which exists by definition of $W^{1,p}_0(\Omega)$. We can assume each $v_n$ to be non-negative and that we have almost everywhere convergence in $\Omega$, as well.
For every $\varepsilon>0$, we take the admissible test function $\varphi=v_n^p/(u+\varepsilon)^{p-1}$ in the weak formulation of the equation for $u$. This yields
\[
\begin{split}
\lambda\,\int_\Omega u^{q-1}\,\frac{v^p_n}{(u+\varepsilon)^{p-1}}\,dx&=\int_\Omega \left\langle |\nabla u|^{p-2}\,\nabla u,\nabla \left(\frac{v_n^p}{(u+\varepsilon)^{p-1}}\right)\right\rangle\,dx\\
&=\int_\Omega \left\langle |\nabla (u+\varepsilon)|^{p-2}\,\nabla (u+\varepsilon),\nabla \left(\frac{v_n^p}{(u+\varepsilon)^{p-1}}\right)\right\rangle\,dx\\
&\le \int_\Omega |\nabla v_n|^p\,dx.
\end{split}
\]
In the last inequality, we used {\it Picone's inequality} for the $p-$Laplacian, see \cite{AH}. By taking the limit as $n$ goes to $\infty$, we thus get
\[
\lambda\,\int_\Omega u^{q-1}\,\frac{v^p}{(u+\varepsilon)^{p-1}}\,dx\le \int_\Omega |\nabla v|^p\,dx.
\]
On the other hand, by using that $v$ is a solution of the same equation, we get
\[
\int_\Omega |\nabla v|^p\,dx=\lambda\,\int_\Omega v^q\,dx.
\]
We thus obtain for every $\varepsilon>0$
\[
\int_\Omega u^{q-1}\,\frac{v^p}{(u+\varepsilon)^{p-1}}\,dx\le \int_\Omega v^q\,dx.
\]
By taking the limit as $\varepsilon$ goes to $0$, using Fatou's Lemma and the fact that $u$ is positive, from the previous estimate we get
\[
\int_\Omega u^{q-p}\,v^p\le \int_\Omega v^q\,dx \qquad \mbox{ that is }\qquad \int_\Omega (u^{q-p}-v^{q-p})\,v^p\,dx\le 0.
\]
Since $q-p>0$ and $v>0$ in $\Omega$ by the minimum principle and the connectedness assumption, we get a contradiction with \eqref{over}.
\end{proof}
\begin{oss}
We seize the opportunity to notice that, in the semilinear case $p=2$, the previous result can be obtained by using the fact that
\[
\int_\Omega (v\,\Delta u-u\,\Delta v)\,dx=0,
\]
as in the proof of \cite[Lemma 3]{Lin}. This is based on the fact that $-\Delta$ is a self-adjoint operator, thus this proof can not be extended to the case $p\not =2$. We circumvented this difficulty by using a convexity trick, i.e. Picone's inequality.
\end{oss}
\subsection{Uniform estimates}
In the following result, we give an $L^\infty$ estimate for solutions of the Lane-Emden equation. The result is well-known, but here the main focus is on the precise form of the a priori estimate.
\begin{prop}[$L^\infty$ estimate]
\label{prop:Linfty}
Let $1<p< q_0<p^*$ and let $\Omega\subset\mathbb{R}^N$ be an open set, with finite volume. 
For every $p\le q\le q_0$, let $u\in W^{1,p}_0(\Omega)$ be a positive weak solution of the Lane-Emden equation \eqref{eq:eq},
for some $\lambda>0$.
Then we have $u\in L^\infty(\Omega)$, with the following estimate
\[
\|u\|_{L^\infty(\Omega)}\le
C\, \left(\lambda^\frac{N}{p\,q}\,\|u\|_{L^q(\Omega)}\right)^\frac{p\,q}{p\,q-(q-p)\,N},
\]
for a constant $C=C(N,p,q_0)>0$.
\end{prop}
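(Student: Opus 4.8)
The plan is to run a Moser-type iteration adapted to the $p$-Laplacian, keeping careful track of how the constants depend on $\lambda$ and $\|u\|_{L^q(\Omega)}$. First I would fix a normalization: since the estimate is homogeneous, it suffices to prove it after rescaling. Concretely, for $t>1$ and $M>0$ I would test the weak formulation of \eqref{eq:eq} with $\varphi = u\,\min\{u,M\}^{p(t-1)}$ (a truncation is needed to keep $\varphi\in W^{1,p}_0(\Omega)$ since we do not yet know $u$ is bounded). The left-hand side produces, after discarding the truncated region and using the chain rule, a multiple of $\int_\Omega |\nabla (u_M^{t})|^p\,dx$ where $u_M=\min\{u,M\}$, with a constant of the form $c\,t^p/(t-1)^{?}$ — more precisely $\int |\nabla u|^{p-2}\langle\nabla u,\nabla\varphi\rangle \geq c\,t^{1-p}\int|\nabla u_M^t|^p$. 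The right-hand side is $\lambda\int_\Omega u^{q-1}\,u\,u_M^{p(t-1)}\,dx \le \lambda\int_\Omega u_M^{pt}\,u^{q-p}\,dx$.

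Next I would invoke the Sobolev inequality $\|w\|_{L^{p^*}(\Omega)}^p \le S_{N,p}\,\|\nabla w\|_{L^p(\Omega)}^p$ (available since $p<p^*$; when $p\ge N$ one works with any large fixed exponent, which is where the dependence on $q_0$ and the structural exponent enters) applied to $w=u_M^t$, and then estimate the right-hand side $\lambda\int_\Omega u_M^{pt}\,u^{q-p}\,dx$ by Hölder's inequality, splitting off the factor $u^{q-p}$ in $L^{N/(q-p)}$-type norm — this is exactly where the condition $q<p^*$, i.e. $(q-p)N < pq$, is used to make the iteration gap $\kappa := p^*/p > 1$ beat the loss. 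One gets a recursive inequality of the shape
\[
\|u_M\|_{L^{p^* t}(\Omega)} \le \big(C\,t\big)^{1/t}\,\Big(\lambda^{1/p}\,\|u\|_{L^q(\Omega)}^{(q-p)/p}\Big)^{1/t}\,\|u_M\|_{L^{pt}(\Omega)}.
\]
Iterating over $t_k = \kappa^k$, $k=0,1,2,\dots$, starting from $t_0$ chosen so that $p t_0 = q$ (so the initial norm is exactly $\|u\|_{L^q(\Omega)}$), the product $\prod_k (C\kappa^k)^{\kappa^{-k}}$ converges to a finite constant $C(N,p,q_0)$, the product of the $L^q$-norm factors telescopes to an exponent $\sum_k \kappa^{-k} = \kappa/(\kappa-1)$, and one passes $M\to\infty$ by monotone convergence to conclude $u\in L^\infty$ with the stated bound; a short computation reconciles the exponent $\frac{pq}{pq-(q-p)N}$ with $\kappa/(\kappa-1)$ after accounting for the $\lambda^{N/(pq)}$ normalization hidden in how $\lambda$ and $\|u\|_{L^q}$ combine.

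The main obstacle I expect is \emph{bookkeeping of the constants} rather than any conceptual difficulty: one must ensure that at no point does a constant depend on $q$ itself in an unbounded way as $q\to p$ or $q\to q_0$, and in particular that the Hölder exponent used to absorb $u^{q-p}$, and hence the iteration gap $\kappa$, stays uniformly bounded away from $1$ for $p\le q\le q_0$ — this is guaranteed precisely because $q_0<p^*$ is fixed. The other delicate point is the case $p\ge N$, where $p^*=\infty$ is not literally available: there one replaces $p^*$ by an arbitrary fixed exponent larger than $q_0$ (the embedding $W^{1,p}_0(\Omega)\hookrightarrow L^r(\Omega)$ holds for all finite $r$), which changes the numerology slightly but not the structure, and one checks the final exponent still comes out as claimed. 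Everything else — the truncation argument, the chain rule estimate $|\nabla u_M^t| = t\,u_M^{t-1}|\nabla u_M|$, the monotone passage $M\to\infty$ — is routine.
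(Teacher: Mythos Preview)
Your approach is correct and follows the same Moser iteration scheme as the paper, but the execution differs in two points worth noting.

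First, the paper handles the nonlinearity more directly: since $u\in L^\infty(\Omega)$ is taken as already known, it tests with $\varphi=u^\beta$ (no truncation) and simply bounds $u^{q-1+\beta}\le \|u\|_{L^\infty(\Omega)}^{q-p}\,u^{\beta+p-1}$. The iteration then carries the factor $\|u\|_{L^\infty(\Omega)}^{q-p}$ as a constant, producing in the end
\[
\|u\|_{L^\infty(\Omega)}\le C\,\lambda^{N/(pq)}\,\|u\|_{L^\infty(\Omega)}^{(q-p)N/(pq)}\,\|u\|_{L^q(\Omega)},
\]
and the exponent $(q-p)N/(pq)<1$ (equivalently $q<p^*$) allows absorption into the left-hand side, yielding the stated bound immediately. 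Your H\"older-based handling of $u^{q-p}$ is also viable, but as written the recursion is slightly inconsistent: splitting off $u^{q-p}$ in the $L^{q/(q-p)}$ norm would leave $\|u_M\|_{L^{qt}}$ on the right rather than $\|u_M\|_{L^{pt}}$, so the effective iteration gap is $p^*/q$, not $p^*/p$. This still works, but the bookkeeping you flag as the main obstacle needs to be redone accordingly.

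Second, for $p\ge N$ the paper does not iterate with an ad hoc large exponent. For $p=N$ it uses the Ladyzhenskaya interpolation inequality in place of Sobolev, and for $p>N$ it bypasses iteration entirely via a Morrey-type interpolation inequality
\[
\|\varphi\|_{L^\infty}\le Q_{N,p}\,\|\nabla\varphi\|_{L^p}^{\frac{N}{pq-(q-p)N}}\,\|\varphi\|_{L^q}^{\frac{p-N}{pq-(q-p)N}},
\]
combined with $\int|\nabla u|^p=\lambda\int u^q$ from the equation. This gives the claimed estimate in one line and makes the $q$-independence of the constant transparent.
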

\begin{proof}
As already said, the fact that $u\in L^\infty(\Omega)$ is well-known, we focus on obtaining the precise a priori estimate, through a Moser's iteration. The function $u$ solves
\[
\int_\Omega \langle |\nabla u|^{p-2}\,\nabla u,\nabla \varphi\rangle\,dx=\lambda\,\int_\Omega u^{q-1}\,\varphi,
\]
for every $\varphi\in W^{1,p}_0(\Omega)$. We take $\beta\ge 1$ and insert the test function
\[
\varphi=u^\beta.
\]
This gives 
\[
\frac{\beta\,p^p}{(\beta+p-1)^p}\,\int_\Omega\left|\nabla u^\frac{\beta+p-1}{p}\right|^p\,dx=\lambda\,\int_\Omega u^{q-1+\beta}\le \lambda\,\|u\|_{L^\infty(\Omega)}^{q-p}\,\int_\Omega u^{\beta+p-1}\,dx.
\]
By observing that for every $\beta\ge 1$ we have
\[
\left(\frac{\beta+p-1}{p}\right)^p\,\frac{1}{\beta}\le \left(\frac{\beta+p-1}{p}\right)^{p-1},
\]
we can rewrite the previous estimate as
\[
\int_\Omega\left|\nabla u^\frac{\beta+p-1}{p}\right|^p\,dx\le \left(\frac{\beta+p-1}{p}\right)^{p-1}\,\lambda\,\|u\|_{L^\infty(\Omega)}^{q-p}\,\int_\Omega u^{\beta+p-1}\,dx.
\]
We set for simplicity $\vartheta=(\beta+p-1)/p$, thus the previous inequality is equivalent to
\begin{equation}
\label{moser}
\int_\Omega\left|\nabla u^\vartheta\right|^p\,dx\le \vartheta^{p-1}\,\lambda\,\|u\|_{L^\infty(\Omega)}^{q-p}\,\int_\Omega u^{p\,\vartheta}\,dx.
\end{equation}
We now need to distinguish three cases, depeding on whether $p<N$, $p=N$ or $p>N$.
\vskip.2cm\noindent
{\it Case $1<p<N$}: we recall the {\it Sobolev inequality}  
\[
S_{N,p}\,\left(\int_\Omega |\varphi|^{p^*}\,dx\right)^\frac{p}{p^*}\le \int_\Omega |\nabla \varphi|^p\,dx,\qquad \mbox{ for every } \varphi\in W^{1,p}_0(\Omega),
\]
where $p^*=(N\,p)/(N-p)$.
By using this inequality in the left-hand side of \eqref{moser}, we get
\[
S_{N,p}\,\left(\int_\Omega u^{p^*\vartheta}\,dx\right)^\frac{p}{p^*}\le \vartheta^{p-1}\,\lambda\,\|u\|_{L^\infty(\Omega)}^{q-p}\,\int_\Omega u^{p\,\vartheta}\,dx,
\]
and thus
\begin{equation}
\label{moser2}
\left(\int_\Omega u^{p^*\vartheta}\,dx\right)^\frac{1}{p^*\vartheta}\le (\vartheta^\frac{1}{\vartheta})^\frac{p-1}{p}\,\left(\frac{\lambda\,\|u\|_{L^\infty(\Omega)}^{q-p}}{S_{N,p}}\right)^\frac{1}{p\,\vartheta}\,\left(\int_\Omega u^{p\,\vartheta}\,dx\right)^\frac{1}{p\,\vartheta}.
\end{equation}
We define the sequence 
\[
\vartheta_0=\frac{q}{p},\qquad \vartheta_{i+1}=\frac{p^*}{p}\,\vartheta_i=\left(\frac{N}{N-p}\right)^{i+1}\,\frac{q}{p},
\]
and use \eqref{moser2} with $\vartheta_i$. By iterating infinitely many times and observing that
\[
\sum_{i=0}^\infty \frac{1}{p\,\vartheta_i}=\frac{1}{p}\,\frac{p}{q}\,\sum_{i=0}^\infty \left(\frac{N-p}{N}\right)^{i}=\frac{N}{p\,q},
\]
and
\[
\begin{split}
\lim_{n\to\infty} \prod_{i=0}^n \vartheta_i^\frac{1}{\vartheta_i}&=\exp\left(\sum_{i=0}^\infty \frac{\log \vartheta_i}{\vartheta_i}\right)\\
&= \exp\left(\frac{p}{q}\,\log \frac{q}{p}\,\sum_{i=0}^\infty \left(\frac{N-p}{N}\right)^i+\frac{p}{q}\log\frac{N}{N-p}\,\sum_{i=0}^\infty i\,\left(\frac{N-p}{N}\right)^i \right)\\
&\le \exp\left(\log \frac{q_0}{p}\,\sum_{i=0}^\infty \left(\frac{N-p}{N}\right)^i\right)\,\exp\left(\log\frac{N}{N-p}\,\sum_{i=0}^\infty i\,\left(\frac{N-p}{N}\right)^i \right)=:C_0,
\end{split}
\]
we obtain
\begin{equation}
\label{almost}
\|u\|_{L^\infty(\Omega)}\le C_0^\frac{p-1}{p}\, \left(\frac{\lambda}{S_{N,p}}\right)^\frac{N}{p\,q}\,\|u\|_{L^\infty(\Omega)}^\frac{(q-p)\,N}{p\,q}\,\left(\int_\Omega u^q\,dx\right)^\frac{1}{q},
\end{equation}
where $C_0=C_0(N,p,q_0)>0$. We now observe that
\[
\frac{(q-p)\,N}{p\,q}<1\qquad \Longleftrightarrow\qquad q<p^*,
\]
which holds true. Thus from \eqref{almost} we get
\[
\|u\|_{L^\infty(\Omega)}\le C_0^\frac{(p-1)\,q}{p\,q-(q-p)\,N}\, \left(\frac{\lambda}{S_{N,p}}\right)^\frac{N}{p\,q-(q-p)\,N}\,\|u\|_{L^q(\Omega)}^\frac{p\,q}{p\,q-(q-p)\,N},
\]
as desired.
\vskip.2cm\noindent
{\it Case $p=N$}. In this case, we do not have the Sobolev inequality at our disposal. We can replace it with the following {\it Ladyzhenskaya interpolation inequality} for $N<\gamma<\infty$ (see \cite[Theorem 12.83]{Le})
\[
L_{N,\gamma}\,\left(\int_\Omega |\varphi|^\gamma\,dx\right)^\frac{N}{\gamma}\le \left(\int_\Omega |\nabla \varphi|^N\,dx\right)^\frac{\gamma-N}{\gamma}\,\left(\int_\Omega |\varphi|^N\,dx\right)^\frac{N}{\gamma},
\]
for every $\varphi\in W^{1,N}_0(\Omega)$. We use this inequality with the choice $\gamma=2\,N$ in \eqref{moser}. This gives
\[
L_{N,2\,N}\,\left(\int_\Omega u^{2\,N\,\vartheta}\,dx\right)^\frac{1}{2}\le \left(\vartheta^{N-1}\,\lambda\,\|u\|_{L^\infty(\Omega)}^{q-N}\,\int_\Omega u^{N\,\vartheta}\,dx\right)^\frac{1}{2}\,\left(\int_\Omega u^{N\,\vartheta}\,dx\right)^\frac{1}{2}.
\]
After some algebraic manipulations, we get the following replacement of \eqref{almost}
\begin{equation}
\label{moser3}
\left(\int_\Omega u^{2\,N\,\vartheta}\,dx\right)^\frac{1}{2\,N\,\vartheta}\le \left(\vartheta^\frac{1}{\vartheta}\right)^\frac{N-1}{2\,N}\,\left(\frac{\lambda\,\|u\|_{L^\infty(\Omega)}^{q-N}}{L_{N,2\,N}^2}\right)^\frac{1}{2\,N\,\vartheta}\,\left(\int_\Omega u^{N\,\vartheta}\,dx\right)^\frac{1}{N\,\vartheta}.
\end{equation}
We are now in the same situation as above.
We define this time
\[
\vartheta_0=\frac{q}{N},\qquad \vartheta_{i+1}=2\,\vartheta_i=2^{i+1}\,\frac{q}{N},
\]
and use \eqref{moser3} with $\vartheta_i$.
We observe that 
\[
\sum_{i=0}^\infty \frac{1}{2\,N\,\vartheta_i}=\frac{1}{2\,N}\,\frac{N}{q}\,\sum_{i=0}^\infty 2^{-i}=\frac{1}{q},
\]
and
\[
\begin{split}
\lim_{n\to\infty} \prod_{i=0}^n \vartheta_i^\frac{1}{\vartheta_i}&=\exp\left(\sum_{i=0}^\infty \frac{\log \vartheta_i}{\vartheta_i}\right)\\
&=\exp\left(\frac{N}{q}\log \frac{q}{N}\,\sum_{i=0}^\infty \left(\frac{1}{2}\right)^i+\frac{N}{q}\log2\,\sum_{i=0}^\infty i\,\left(\frac{1}{2}\right)^i \right)\\
&\le \exp\left(\log \frac{q_0}{N}\,\sum_{i=0}^\infty \left(\frac{1}{2}\right)^i\right)\,\exp\left(\log 2\,\sum_{i=0}^\infty i\,\left(\frac{1}{2}\right)^i \right)=:C_1,
\end{split}
\]
thus by iterating the estimate we obtain
\[
\|u\|_{L^\infty(\Omega)}\le C_1^\frac{N-1}{2\,N}\, \left(\frac{\lambda}{L^2_{N,2\,N}}\right)^\frac{1}{q}\,\|u\|_{L^\infty(\Omega)}^{1-\frac{N}{q}}\,\left(\int_\Omega u^q\,dx\right)^\frac{1}{q},
\]
where $C_1=C_1(N,q_0)>0$. With some elementary manipulations, we now get the desired estimate.
\vskip.2cm\noindent
{\it Case $p>N$}. This is the easiest case, it is sufficient to use the {\it Morrey--type interpolation inequality} (see Proposition \ref{prop:morrey} below)
\[
\|\varphi\|_{L^\infty(\Omega)}\le Q_{N,p}\,\left(\int_{\Omega} |\nabla \varphi|^p\,dx\right)^\frac{N}{p\,q-(q-p)\,N}\,\left(\int_\Omega |\varphi|^q\,dx\right)^\frac{p-N}{p\,q-(q-p)\,N},\quad \mbox{ for every } \varphi\in W^{1,p}_0(\Omega).
\]
By further observing that from the equation we have
\[
\int_\Omega |\nabla u|^p\,dx=\lambda\,\int_\Omega |u|^q\,dx,
\]
we get
\[
\|u\|_{L^\infty(\Omega)}\le Q_{N,p}\,\lambda^\frac{N}{p\,q\,-(q-p)\,N}\,\left(\,\int_\Omega |u|^q\,dx\right)^\frac{p}{p\,q-(q-p)\,N}.
\]
This concludes the proof.
\end{proof}
The following uniform $C^1$ estimate for solutions of \eqref{eq:pq} will play a crucial role in the proof of our main result. Here we need to enforce the assumptions on $\Omega$ and to work with minimizers of \eqref{eq:pq}.
\begin{teo}
\label{teo:uniformi}
Let $1<p<q_0<p^*$ and let $\Omega\subset\mathbb{R}^N$ be an open bounded connected set, with boundary of class $C^{1,\alpha}$, for some $0<\alpha<1$. For every $p\le q\le q_0$, let $u_q\in W^{1,p}_0(\Omega)$ be a positive minimizer of \eqref{eq:pq}. 
Then there exist $\chi=\chi(\alpha,N,p,q_0, \Omega)\in (0,1)$, $\delta=\delta(\alpha,N,p,q_0,\Omega)>0$ and $\mu_0=\mu_0(\alpha,N,p,q_0,\Omega)>0$, $\mu_1=\mu_1(\alpha,N,p,q_0,\Omega)>0$ such that:
\begin{itemize}
\item $u_q\in C^{1,\chi}(\overline\Omega)$ with the uniform estimate 
\[
\|u_q\|_{C^{1,\chi}(\overline\Omega)}\le L,
\]
for some $L=L(\alpha,N,p,\Omega,q_0)>0$;
\vskip.2cm
\item by defining $\Omega_\delta=\Big\{x\in\Omega\, :\, \mathrm{dist}(x,\partial\Omega)\le \delta\Big\}$, we have
\[
|\nabla u_q|\ge \mu_0,\qquad \mbox{ in } \Omega_\delta,
\]
and 
\[
u_q\ge \mu_1,\qquad \mbox{ in } \overline{\Omega\setminus \Omega_\delta}.
\]
\end{itemize}
\end{teo}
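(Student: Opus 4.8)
The plan is to assemble the theorem from three layers of regularity theory, all made quantitative and uniform in $q\in[p,q_0]$. First I would establish the global $C^{1,\chi}$ bound. By Proposition \ref{prop:Linfty}, since $u_q$ is a minimizer of \eqref{eq:pq} we have $\lambda=\lambda_{p,q}(\Omega)$ and $\|u_q\|_{L^q(\Omega)}=1$; moreover $\lambda_{p,q}(\Omega)$ stays in a compact subinterval of $(0,\infty)$ as $q\to p$ (this follows from the equivalence of $\|\nabla\cdot\|_{L^p}$ and the embedding constants, together with continuity of $q\mapsto\lambda_{p,q}(\Omega)$; I would record this as a short preliminary computation using H\"older to compare $\|u\|_{L^q}$ with $\|u\|_{L^p}$ on the finite-volume set). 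Hence the exponent $\frac{p\,q}{p\,q-(q-p)\,N}$ and the constant $C(N,p,q_0)$ in Proposition \ref{prop:Linfty} are bounded uniformly, giving $\|u_q\|_{L^\infty(\Omega)}\le M$ with $M=M(\alpha,N,p,q_0,\Omega)$. The right-hand side of \eqref{eq:eq} is then $\lambda_{p,q}(\Omega)\,u_q^{q-1}$, which is bounded in $L^\infty(\Omega)$ uniformly in $q$. Now I invoke the global gradient regularity theory for the $p$-Laplacian with bounded right-hand side on $C^{1,\alpha}$ domains — Lieberman's boundary estimate (and the interior estimates of DiBenedetto/Tolksdorf) — which yields $u_q\in C^{1,\chi}(\overline\Omega)$ with $\|u_q\|_{C^{1,\chi}(\overline\Omega)}\le L$, where $\chi$ and $L$ depend only on $N$, $p$, $\alpha$, $\mathrm{diam}\,\Omega$, the $C^{1,\alpha}$-character of $\partial\Omega$, and the uniform $L^\infty$ bound on the datum; in particular they do not depend on $q$. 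This is the first bullet.

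For the second bullet I would exploit Hopf's lemma, again quantitatively. Since $u_q>0$ solves $-\Delta_p u_q = \lambda_{p,q}(\Omega)\,u_q^{q-1}\ge 0$ in $\Omega$ with $u_q=0$ on $\partial\Omega$, and the datum is bounded, the boundary point lemma for the $p$-Laplacian on $C^{1,\alpha}$ domains (Vázquez's strong maximum principle and Hopf lemma, or the version in Damascelli--Sciunzi) gives $\partial u_q/\partial\nu \le -c_0 < 0$ on $\partial\Omega$ with $c_0$ depending only on the structural data and on a \emph{uniform lower bound} for $u_q$ away from the boundary. To get that lower bound uniformly in $q$ I would use a Harnack-type argument: fix a ball $B\subset\subset\Omega$; since $u_q\to$ a first Dirichlet $p$-eigenfunction (normalized in $L^p$, with $L^q$-normalization converging to the $L^p$-one because $|\Omega|<\infty$) — or, more self-containedly, since $\|u_q\|_{L^q}=1$ forces $\|u_q\|_{L^\infty}$ bounded below (otherwise $\int u_q^q\to 0$), and the weak Harnack inequality propagates a positive $L^p$-mass on $B$ to a pointwise lower bound on a slightly smaller ball — I obtain $\inf_{B'}u_q\ge m_0>0$ uniformly. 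Combining the uniform $C^1$ bound (which controls $|\nabla u_q|$ everywhere) with this interior lower bound and the quantitative Hopf lemma, there is $\delta>0$, depending only on the data, such that $|\nabla u_q|\ge\mu_0$ throughout $\Omega_\delta$: indeed $|\nabla u_q|$ is bounded below near $\partial\Omega$ by the Hopf estimate, and by continuity of $\nabla u_q$ with a modulus uniform in $q$ (from $C^{1,\chi}$) this lower bound persists in a fixed tubular neighborhood. Finally, $u_q\ge\mu_1$ on $\overline{\Omega\setminus\Omega_\delta}$ follows from the interior Harnack lower bound applied on a compact exhaustion: $\overline{\Omega\setminus\Omega_\delta}$ is a fixed compact subset of $\Omega$, connected $\Omega$ lets us chain Harnack balls from $B'$, and the number of chaining steps is bounded by a geometric quantity independent of $q$.

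The main obstacle I expect is precisely the \emph{uniformity} of the Hopf-type lower bound $|\nabla u_q|\ge\mu_0$ in $\Omega_\delta$: the classical boundary point lemma for the $p$-Laplacian is not stated with constants tracked through the degeneracy, and one must be careful that the comparison functions (torsion-type barriers on inner-touching balls of a fixed radius, available by the uniform interior ball condition that $C^{1,\alpha}$ regularity supplies) produce a lower bound on the normal derivative depending only on the interior ball radius, on $\inf u_q$ on the contact sphere, and on $p,N$ — all of which we have controlled uniformly. Propagating this from $\partial\Omega$ inward to a fixed-width collar, rather than just to the boundary, is where the uniform $C^{1,\chi}$ modulus is essential: it guarantees that the set where $|\nabla u_q|$ could drop below $\mu_0/2$ cannot reach within distance $\delta$ of $\partial\Omega$, with $\delta$ independent of $q$. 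Everything else — the $L^\infty$ bound, the interior Hölder gradient estimate, the Harnack lower bound — is standard once the datum $\lambda_{p,q}(\Omega)\,u_q^{q-1}$ is known to be uniformly bounded, which is the content of Proposition \ref{prop:Linfty} together with the elementary control on $\lambda_{p,q}(\Omega)$ as $q\to p^+$.
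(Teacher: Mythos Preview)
Your proposal is essentially correct and agrees with the paper through the first bullet: uniform control on $\lambda_{p,q}(\Omega)$ via continuity in $q$, then Proposition~\ref{prop:Linfty} for the $L^\infty$ bound, then Lieberman's global $C^{1,\chi}$ estimate. The difference lies in how you establish the uniform lower bounds on $|\nabla u_q|$ near $\partial\Omega$ and on $u_q$ in the interior.

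The paper does not attempt a quantitative Hopf lemma or Harnack chaining. Instead it argues by \emph{compactness}: for each fixed $q$, Hopf's lemma (in the form of \cite{MS}, valid for $C^{1,\alpha}$ domains) gives $\min_{\partial\Omega}|\nabla u_q|>0$, and the minimum principle gives $\min_{\overline{\Omega\setminus\Omega_\delta}}u_q>0$. Uniformity in $q\in[p,q_0]$ is then obtained softly: the family $\{u_q\}$ is precompact in $C^1(\overline\Omega)$ by the uniform $C^{1,\chi}$ bound, and any $C^1$--limit along a sequence $q_n\to q^*$ is again a positive minimizer for $\lambda_{p,q^*}(\Omega)$, hence satisfies Hopf and the minimum principle with a strictly positive constant. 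An Arzel\`a--Ascoli contradiction argument then yields the uniform $\overline\mu$ and $\mu_1$. The propagation from $\partial\Omega$ into the collar $\Omega_\delta$ via the $C^{0,\chi}$ modulus of $\nabla u_q$ is exactly as you describe.

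Your quantitative route would also work, but one point needs correcting: $C^{1,\alpha}$ regularity with $\alpha<1$ does \emph{not} supply a uniform interior ball condition (that requires $C^{1,1}$), so the classical torsion-barrier Hopf argument is not directly available. You would need a Hopf lemma adapted to $C^{1,\alpha}$ domains, such as the one in \cite{MS} that the paper invokes, and then check that its constant depends only on the structural data you have controlled. The compactness argument sidesteps this entirely, at the cost of losing any explicit dependence of $\mu_0,\mu_1,\delta$ on the data---which for the purposes of this paper is harmless.
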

\begin{proof}
As already observed, each $u_q$ is a solution of the quasilinear equation 
\[
-\Delta_p u_q=\lambda_{p,q}(\Omega)\, u_q^{q-1},\qquad \mbox{ in }\Omega,
\]
with homogeneous Dirichlet boundary conditions and the normalization condition
\[
\int_\Omega |u_q|^q\,dx=1.
\]
By \cite{Er} (see also \cite[Theorem 1]{AFI}), we know that the function $q\mapsto \lambda_{p,q}(\Omega)$ is continuous and positive. Thus there exist two constants $\Lambda_0,\lambda_0>0$ depending only on $N,p,\Omega$ and $q_0$ such that 
\begin{equation}
\label{lowerlambda}
\lambda_0\le \lambda_{p,q}(\Omega)\le \Lambda_0,\qquad \mbox{ for every } q\in[p,q_0].
\end{equation}
By using this fact and the normalization condition, we get from Proposition \ref{prop:Linfty} that there exists a constant $C=C(N,p,q_0,\Omega)>0$ such that
\[
\|u_q\|_{L^\infty}\le C,\qquad \qquad \mbox{ for every } q\in[p,q_0].
\] 
The uniform $C^{1,\chi}(\overline\Omega)$ estimate now follows by applying \cite[Theorem 1]{Lie88}.
\vskip.2cm\noindent
As for the uniform lower bound on the gradient, we observe at first that we can apply a suitable version of the Hopf's Lemma (see \cite[Theorem 1]{MS}) to each $u_q$. This yields
\[
\min_{\partial\Omega}|\nabla u_q|>0,\qquad \mbox{ for every } p\le q\le q_0.
\]
By using that the family $\{|\nabla u_q|\}_{p\le q\le q_0}$ has a uniform $C^{0,\chi}(\partial\Omega)$ estimate, an application of Arzel\`a-Ascoli Theorem gives that there exists a constant $\overline\mu>0$ such that 
\[
\min_{\partial\Omega}|\nabla u_q|\ge \overline\mu,\qquad \mbox{ for every } p\le q\le q_0.
\]
We now choose $\delta_0>0$ sufficiently small, such that each point $x\in\Omega_{\delta_0}$ can be uniquely written as
\[
x=x'-|x'-x|\,\nu_\Omega(x'),\qquad \mbox{ with } x'\in\partial\Omega.
\]
This is possible thanks to the regularity of $\partial\Omega$. Here $\nu_\Omega$ stands for the normal outer versor. We then get for every $p\le q\le q_0$ , every $0<\delta\le \delta_0$ and every $x\in\Omega_{\delta}$
\[
|\nabla u_q(x)|\ge |\nabla u_q(x')|-\Big||\nabla u_q(x')|-|\nabla u_q(x)|\Big|\ge \overline\mu-L\,|x'-x|^\chi\ge (\overline\mu-L\,\delta^\chi).
\]
If we now choose
\[
\delta=\min\left\{\left(\frac{\overline\mu}{2}\,\frac{1}{L}\right)^\frac{1}{\chi},\delta_0\right\},
\]
and set $\mu_0=\overline\mu/2$, we obtain
\[
|\nabla u_q(x)|\ge \mu_0,\qquad \mbox{ for every }x\in\Omega_\delta, \, p\le q\le q_0.
\] 
Finally, the uniform lower bound on $u_q$ in $\overline{\Omega\setminus\Omega_\delta}$ can be proved by observing that
\[
\min_{\overline{\Omega\setminus\Omega_\delta}}u_q>0,\qquad \mbox{ for every } p\le q\le q_0,
\] 
thanks to the minimum principle. As before, since the family $\{u_q\}_{p\le q\le q_0}$ is equi-bounded and equi-Lipschitz, by Arzel\`a-Ascoli Theorem we get the existence of $\mu_1>0$ such that
\[
\min_{\overline{\Omega\setminus\Omega_\delta}}u_q\ge \mu_1,\qquad \mbox{ for every } p\le q\le q_0.
\]
This concludes the proof.
\end{proof}
\begin{oss} We remark that the conclusion of the theorem above also holds for $\delta/2$. The lower bound on the gradient is immediate since $\Omega_{\delta/2}\subset \Omega_\delta$. The lower bound on $u_q$ can be deduced with an identical compactness argument.
\end{oss}
\subsection{Weighted embeddings}
The next result is due to Damascelli and Sciunzi, see \cite{DS}. We are interested in the stability both of the embedding constant and of the embedding exponent, with respect to a varying power $q$. We point out that the exponent $\sigma_0$ below is not optimal, but it will be largely sufficient for our purposes.
\begin{teo}[Uniform weighted Sobolev inequality]
\label{teo:WSI}
Let $2<p<q_0<p^*$ and let $\Omega\subset\mathbb{R}^N$ be an open bounded connected set, with boundary of class $C^{1,\alpha}$, for some $0<\alpha<1$. For every $p\le q\le q_0$, let $u_q\in W^{1,p}_0(\Omega)$ be a positive minimizer of \eqref{eq:pq}.
We define
\begin{equation}
\label{sigma0}
\sigma_0=2\,\left(1-\frac{1}{(2\,p-3)\,N}\right)^{-1},
\end{equation}
then for every $2<\sigma<\sigma_0$, there exists $\mathcal{T}=\mathcal{T}(\alpha,N,p,q_0,\sigma,\Omega)>0$ such that
\begin{equation}
\label{sorbola}
\mathcal{T}\,\left(\int_\Omega |\varphi|^\sigma\,dx\right)^\frac{2}{\sigma}\le \int_{\Omega} |\nabla u_q|^{p-2}\,|\nabla \varphi|^2\,dx,\qquad \mbox{ for every } \varphi\in C^\infty_0(\Omega),\ q\in[p,q_0].
\end{equation}
Moreover, such an inequality holds for every $\varphi \in W^{1,p}_0(\Omega)$, as well. The constant $\mathcal{T}$ goes to $0$ as $\sigma\nearrow \sigma_0$.
\end{teo}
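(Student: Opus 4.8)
The plan is to deduce \eqref{sorbola} from the ordinary (unweighted) Sobolev inequality on $W^{1,t}_0(\Omega)$ for a well-chosen $t\in(1,2)$, the passage from $|\nabla\varphi|^t$ to $|\nabla u_q|^{p-2}|\nabla\varphi|^2$ being paid for, via H\"older's inequality, by a \emph{negative} power of $|\nabla u_q|$; the threshold $\sigma_0$ of \eqref{sigma0} is exactly what keeps that negative power within the summability range provided by the Damascelli--Sciunzi estimates recalled in Appendix~\ref{sec:C}. Concretely, given $2<\sigma<\sigma_0$ I set $t:=N\sigma/(N+\sigma)$, so that $\sigma=t^\ast:=Nt/(N-t)$; using $\sigma>2$ and \eqref{sigma0} one checks $1<t<2$ and $t<N$, so the embedding $W^{1,t}_0(\Omega)\hookrightarrow L^\sigma(\Omega)$ holds with a constant depending only on $N$ and $t$. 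For $\varphi\in C^\infty_0(\Omega)$, H\"older's inequality with exponents $2/t$ and $2/(2-t)$ then gives
\[
\int_\Omega |\nabla\varphi|^t\,dx
=\int_\Omega |\nabla u_q|^{-\frac{(p-2)\,t}{2}}\,\Big(|\nabla u_q|^{p-2}\,|\nabla\varphi|^2\Big)^{\frac{t}{2}}\,dx
\le\left(\int_\Omega |\nabla u_q|^{-\gamma}\,dx\right)^{\frac{2-t}{2}}\left(\int_\Omega |\nabla u_q|^{p-2}\,|\nabla\varphi|^2\,dx\right)^{\frac{t}{2}},
\]
where $\gamma:=(p-2)\,t/(2-t)$.

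The crux is to bound $\int_\Omega|\nabla u_q|^{-\gamma}\,dx$ uniformly for $q\in[p,q_0]$. I would split $\Omega=\Omega_\delta\cup(\Omega\setminus\Omega_\delta)$, with $\delta$ as in Theorem~\ref{teo:uniformi}. On the boundary layer $\Omega_\delta$ one has $|\nabla u_q|\ge\mu_0$, whence $\int_{\Omega_\delta}|\nabla u_q|^{-\gamma}\,dx\le\mu_0^{-\gamma}\,|\Omega|$. On the complement, whose closure is compactly contained in $\Omega$, I invoke the Damascelli--Sciunzi weighted summability of $|\nabla u_q|^{-1}$ in the uniform-in-$q$ form of Appendix~\ref{sec:C}: this yields $|\nabla u_q|^{-1}\in L^r$ on such sets, with bounds independent of $q$, for every $r$ below an explicit threshold depending only on $N$ and $p$, and \eqref{sigma0} is designed precisely so that $\gamma=(p-2)\,t/(2-t)$ stays below that threshold if and only if $\sigma<\sigma_0$. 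Hence $\int_{\Omega\setminus\Omega_\delta}|\nabla u_q|^{-\gamma}\,dx\le C$, and altogether $\int_\Omega|\nabla u_q|^{-\gamma}\,dx\le C=C(\alpha,N,p,q_0,\sigma,\Omega)$.

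Plugging this into the displayed inequality and raising to the power $2/t$ gives $\|\nabla\varphi\|_{L^t(\Omega)}^2\le C\int_\Omega|\nabla u_q|^{p-2}|\nabla\varphi|^2\,dx$, and composing with the Sobolev inequality of the first paragraph produces \eqref{sorbola} on $C^\infty_0(\Omega)$ with $\mathcal{T}=\mathcal{T}(\alpha,N,p,q_0,\sigma,\Omega)>0$; since our bound for $\int_\Omega|\nabla u_q|^{-\gamma}$ degenerates as $\gamma$ tends to the threshold, i.e.\ as $\sigma\nearrow\sigma_0$, the constant $\mathcal{T}$ so obtained tends to $0$. Finally, for general $\varphi\in W^{1,p}_0(\Omega)$ I pick $\varphi_n\in C^\infty_0(\Omega)$ with $\nabla\varphi_n\to\nabla\varphi$ in $L^p(\Omega)$; since $\Omega$ is bounded and $p>2$ this also gives $\nabla\varphi_n\to\nabla\varphi$ in $L^2(\Omega)$, and as $|\nabla u_q|^{p-2}\in L^\infty(\Omega)$ by Theorem~\ref{teo:uniformi} the right-hand side of \eqref{sorbola} converges along the sequence; then the left-hand side is Cauchy in $L^\sigma(\Omega)$, its limit is identified with $\varphi$ through Poincar\'e's inequality, and \eqref{sorbola} passes to the limit.

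The one genuinely delicate point is the uniform bound of the second paragraph, namely that the Damascelli--Sciunzi summability of $|\nabla u_q|^{-1}$ holds with constants that do not blow up as $q\to p$. Since \cite{DS} establishes these estimates for a fixed positive solution, with no explicit a priori control, this requires re-examining their proof and tracking all constants through the uniform bounds $\|u_q\|_{L^\infty}\le C$ and $\|u_q\|_{C^{1,\chi}(\overline\Omega)}\le L$ of Proposition~\ref{prop:Linfty} and Theorem~\ref{teo:uniformi} and the two-sided bound \eqref{lowerlambda} on $\lambda_{p,q}(\Omega)$; this is the content of Appendix~\ref{sec:C}. Given that input, the rest is essentially bookkeeping, the only step needing care being the elementary verification that $\gamma=(p-2)\,t/(2-t)$ lies below the Damascelli--Sciunzi threshold exactly when $\sigma<\sigma_0$.
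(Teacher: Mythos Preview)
Your approach has a genuine gap. The ``elementary verification'' you defer---that $(p-2)t/(2-t)<p-1$ exactly when $\sigma<\sigma_0$---is in fact false. With $t=N\sigma/(N+\sigma)$, the condition $(p-2)t/(2-t)<p-1$ is equivalent to $t<2(p-1)/(2p-3)$, which solves to
\[
\sigma<\frac{2N(p-1)}{N(2p-3)-2(p-1)}.
\]
This threshold does not coincide with $\sigma_0$ of \eqref{sigma0}, and worse, it can fall at or below~$2$: for instance with $N=3$ and $p=4$ it equals exactly~$2$, so your argument proves nothing. More generally, your method yields some $\sigma>2$ only when $N<2+2/(p-2)$, which for $p\ge 4$ excludes every dimension $N\ge 3$.

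The underlying issue is that you use only the plain integrability $\int_\Omega|\nabla u_q|^{-r}\,dx<\infty$ for $r<p-1$ (the case $\gamma=0$ of Theorem~\ref{teo:sweden}), and then try to recover the gain in integrability via the ordinary Sobolev inequality; this is essentially Corollary~\ref{coro:nakata} followed by Sobolev. The paper instead exploits the full Riesz-type estimate
\[
\sup_{x\in\Omega}\int_\Omega \frac{dy}{|\nabla u_q(y)|^{r}\,|x-y|^{\gamma}}<\infty,\qquad r<p-1,\ \gamma<N-2,
\]
of Theorem~\ref{teo:sweden}: starting from the representation formula $|\varphi(x)|\le C\int_\Omega|\nabla\varphi(y)|\,|x-y|^{-(N-1)}\,dy$, one splits the kernel $|x-y|^{-(N-1)}$ by H\"older so that part of it is absorbed by the weighted summability of $|\nabla u_q|^{-r}$, and the remaining singular kernel is handled by the mapping properties of Riesz potentials on $L^s$. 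It is this additional gain from the kernel---not the Sobolev embedding---that produces the exponent $\sigma_0$; your route cannot reach it in general.
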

\begin{proof}
The inequality follows from \cite[Theorem 3.1]{DS}, by making the choices (with the notations of \cite{DS})
\[
\rho=|\nabla u_q|^{p-2},\qquad p=2,\qquad t=\frac{2\,p-3}{2\,p-4},\qquad \gamma=N-1-\frac{2\,p-3}{2\,p-4}.
\]
However, since we are particularly interested in keeping track of the dependence of the constant $\mathcal{T}$ on the data, we will briefly repeat the proof of \cite{DS}.

For every $\varphi\in C^\infty_0(\Omega)$, we recall the classical representation formula
\[
\varphi(x)=C\,\int_{\mathbb{R}^N} \left\langle \nabla \varphi(y),\frac{x-y}{|x-y|^N}\right\rangle\,dy,
\]
where $C=C(N)>0$, see for example \cite[Lemma 7.14]{GT}.
This in turn implies that
\[
|\varphi(x)|\le C\,\int_\Omega \frac{|\nabla \varphi(y)|}{|x-y|^{N-1}}\,dy,\qquad \mbox{ for every }x\in\Omega.
\]
We still use the notation
\[
t=\frac{2\,p-3}{2\,p-4},\qquad \gamma=N-1-t=N-1-\frac{2\,p-3}{2\,p-4},
\]
then by using H\"older's inequality with exponents 
\[
2\,t\qquad \mbox{ and }\qquad 2\,t/(2\,t-1),
\] 
we obtain
\[
|\varphi(x)|\le C\,\left(\int_\Omega \frac{1}{|\nabla u_q|^{t\,(p-2)}\,|x-y|^\gamma}\,dy\right)^\frac{1}{2\,t}\,\left(\int_\Omega \left(\frac{|\nabla \varphi(y)|\,|\nabla u_q|^\frac{p-2}{2}}{|x-y|^{N-1-\frac{\gamma}{2\,t}}}\right)^\frac{2\,t}{2\,t-1}\,dy\right)^\frac{2\,t-1}{2\,t}.
\]
Observe that by definition
\[
t\,(p-2)=p-\frac{3}{2}<p-1\qquad \mbox{ and }\qquad \gamma<N-2,
\]
thus we can apply Theorem \ref{teo:sweden} with $r=t\,(p-2)$ and get
\begin{equation}
\label{1}
|\varphi(x)|\le C\,\mathcal{S}^\frac{1}{2\,t}\,\left(\int_\Omega \left(\frac{|\nabla \varphi(y)|\,|\nabla u_q|^\frac{p-2}{2}}{|x-y|^{N-1-\frac{\gamma}{2\,t}}}\right)^\frac{2\,t}{2\,t-1}\,dy\right)^\frac{2\,t-1}{2\,t}.
\end{equation}
For simplicity, we now set
\[
F(y)= \left(|\nabla \varphi(y)|\,|\nabla u_q(y)|^\frac{p-2}{2}\right)^\frac{2\,t}{2\,t-1},
\]
and observe that 
\begin{equation}
\label{magia}
\|F\|_{L^\frac{2\,t-1}{t}(\Omega)}^\frac{2\,t-1}{2\,t}=\left(\int_\Omega |F|^\frac{2\,t-1}{t}\,dy\right)^\frac{1}{2}=\left(\int_\Omega |\nabla \varphi|^2\,|\nabla u_q|^{p-2}\,dy\right)^\frac{1}{2}.
\end{equation}
We notice that in view of the choice of $t$, we have
\[
\frac{2\,t-1}{t}=2-\frac{1}{t}=2\,\left(1-\frac{p-2}{2\,p-3}\right)=\frac{2\,p-2}{2\,p-3}>1.
\]
We also introduce the exponent $0<\Theta<N$ given by
\[
\left(N-1-\frac{\gamma}{2\,t}\right)\,\frac{2\,t}{2\,t-1}=N-\Theta\qquad \mbox{ that is }\qquad \Theta=N-\left(N-1-\frac{\gamma}{2\,t}\right)\,\frac{2\,t}{2\,t-1}.
\]
Thanks to the choices of $\gamma$ and $t$, it is not difficult to see that $\Theta$ is positive. More precisely, observe that this exponent is explicitly given by
\[
\Theta=\frac{t-1}{2\,t-1}=\frac{1}{2\,(p-1)}.
\]
In view of these definitions, we can rewrite \eqref{1} as
\begin{equation}
\label{2}
|\varphi(x)|\le C\,\mathcal{S}^\frac{1}{2\,t}\,\left(\int_\Omega \frac{|F(y)|}{|x-y|^{N-\Theta}}\,dy\right)^\frac{2\,t-1}{2\,t},
\end{equation}
so that one can recognize a suitable Riesz potential on the right-hand side. We then recall the classical potential estimate (see for example \cite[Lemma 7.12]{GT})
\begin{equation}
\label{3}
\left\|\int_\Omega \frac{|F(y)|}{|\cdot-y|^{N-\Theta}}\,dy\right\|_{L^m(\Omega)}\le \left(\frac{1-\delta}{\dfrac{\Theta}{N}-\delta}\right)^{1-\delta}\,\omega_N^\frac{N-\Theta}{N}\,|\Omega|^{\frac{\Theta}{N}-\delta}\,\|F\|_{L^s(\Omega)},
\end{equation}
where
\[
0\le \delta:=\frac{1}{s}-\frac{1}{m}<\frac{\Theta}{N}.
\]
We are now ready to finalize the proof of the weighted Sobolev inequality: we choose $2<\sigma<\sigma_0$, where $\sigma_0$ is given by \eqref{sigma0}. Through some lengthy yet elementary computations, we see that this choice guarantees that we have
\[
\frac{\sigma-2}{\sigma}\,\frac{t}{2\,t-1}<\frac{\Theta}{N}.
\]
We then take the $L^\sigma(\Omega)$ norm in \eqref{2}, so to get
\[
\begin{split}
\|\varphi\|_{L^\sigma(\Omega)}&\le C\,\mathcal{S}^\frac{1}{2\,t}\,\left\|\left(\int_\Omega \frac{|F(y)|}{|\cdot-y|^{N-\Theta}}\,dy\right)^\frac{2\,t-1}{2\,t}\right\|_{L^\sigma(\Omega)}\\
&=C\,\mathcal{S}^\frac{1}{2\,t}\,\left\|\int_\Omega \frac{|F(y)|}{|\cdot-y|^{N-\Theta}}\,dy\right\|^\frac{2\,t-1}{2\,t}_{L^{\sigma\,\frac{2\,t-1}{2\,t}}(\Omega)}.
\end{split}
\]
The last term can be estimated from above by using \eqref{3} with the choices
\[
m=\sigma\,\frac{2\,t-1}{2\,t}\qquad \mbox{ and }\qquad s=\frac{2\,t-1}{t}.
\]
These are feasible, since 
\[
\delta=\frac{1}{s}-\frac{1}{m}=\frac{t}{2\,t-1}-\frac{2\,t}{\sigma\,(2\,t-1)}=\frac{\sigma-2}{\sigma}\,\frac{t}{2\,t-1},
\]
is positive and smaller than $\Theta/N$, thanks to the choice of $\sigma$. We then obtain
\[
\|\varphi\|_{L^\sigma(\Omega)}\le C\,\mathcal{S}^\frac{1}{2\,t}\,\left[\left(\frac{1-\delta}{\dfrac{\Theta}{N}-\delta}\right)^{1-\delta}\,\omega_N^\frac{N-\Theta}{N}\,|\Omega|^{\frac{\Theta}{N}-\delta}\right]^\frac{2\,t-1}{2\,t}\,\|F\|^\frac{2\,t-1}{2\,t}_{L^\frac{2\,t-1}{t}(\Omega)}.
\]
By recalling \eqref{magia}, we thus obtained
\[
 \sqrt{\mathcal{T}}\,\|\varphi\|_{L^\sigma(\Omega)}\le \left(\int_\Omega |\nabla u_q|^{p-2}\,|\nabla \varphi|^2\,dx\right)^\frac{1}{2},
\]
with the constant $\mathcal{T}$ given by
\[
\mathcal{T}=\frac{1}{C^2\,\mathcal{S}^\frac{1}{t}}\,\left[\left(\frac{1-\delta}{\dfrac{\Theta}{N}-\delta}\right)^{1-\delta}\,\omega_N^\frac{N-\Theta}{N}\,|\Omega|^{\frac{\Theta}{N}-\delta}\right]^\frac{1-2\,t}{t}.
\]
By recalling that $C=C(N)>0$, that $\mathcal{S}=\mathcal{S}(\alpha,N,p,q_0,\Omega,r,\gamma)>0$, that $r=t\,(p-2)$, that $\Theta$ and $t$ depend on $p$ only, that $\gamma$ depends on $N$  and $p$ and that (finally!) $\delta$ only depends on $\sigma$ (which is fixed) and on $t$ (which depends only on $p$, as already said), we get the desired claim about the quality of the constant $\mathcal{T}$. We further observe that 
\[
\sigma\to \sigma_0 \qquad \Longrightarrow \qquad \delta \to \frac{\Theta}{N},
\]
which shows that $\mathcal{T}$ goes to $0$, as $\sigma$ approaches $\sigma_0$.
\vskip.2cm\noindent
Finally, we prove the last statement. Let us take $\varphi\in W^{1,p}_0(\Omega)$. By definition, there exists a sequence $\{\varphi_n\}_{n\in\mathbb{N}}\subset C^\infty_0(\Omega)$ such that
\[
\lim_{n\to\infty} \Big[\|\varphi_n-\varphi\|_{L^p(\Omega)}+\|\nabla \varphi_n-\nabla \varphi\|_{L^p(\Omega)}\Big]=0.
\]
We first observe that 
\[
\int_\Omega |\nabla u|^{p-2}\,|\nabla \varphi|^2\,dx\le \left(\int_\Omega |\nabla u|^p\,dx\right)^\frac{p-2}{p}\,\|\nabla \varphi\|_{L^p(\Omega)}^2<+\infty,
\]
thanks to H\"older's inequality. We then have
\[
\begin{split}
\Bigg|\int_\Omega |\nabla u|^{p-2}\,|\nabla \varphi_n|^2\,dx&-\int_\Omega |\nabla u|^{p-2}\,|\nabla \varphi|^2\,dx\Bigg|\\
&\le \left(\int_\Omega |\nabla u|^p\,dx\right)^\frac{p-2}{p}\,\left(\int_\Omega \Big||\nabla \varphi_n|^2-|\nabla \varphi|^2\Big|^\frac{p}{2}\,dx\right)^\frac{2}{p}\\
&=\left(\int_\Omega |\nabla u|^p\,dx\right)^\frac{p-2}{p}\,\left(\int_\Omega \Big||\nabla \varphi_n|-|\nabla \varphi|\Big|^\frac{p}{2}\,\Big||\nabla \varphi_n|+|\nabla \varphi|\Big|^\frac{p}{2}\,dx\right)^\frac{2}{p}\\
&\le \left(\int_\Omega |\nabla u|^p\,dx\right)^\frac{p-2}{p}\,\left(\int_\Omega \Big||\nabla \varphi_n|+|\nabla \varphi|\Big|^p\,dx\right)^\frac{1}{p}\\
&\times\left(\int_\Omega \Big||\nabla \varphi_n|-|\nabla \varphi|\Big|^p\,dx\right)^\frac{1}{p}.
\end{split}
\]
This shows that 
\[
\lim_{n\to\infty}\int_\Omega |\nabla u|^{p-2}\,|\nabla \varphi_n|^2\,dx=\int_\Omega |\nabla u|^{p-2}\,|\nabla \varphi|^2\,dx.
\]
On the other hand, by using that $\varphi_n$ converges to $\varphi$ almost everywhere (up to a subsequence), we can apply Fatou's Lemma and get
\[
\liminf_{n\to\infty} \int_\Omega |\varphi_n|^\sigma\,dx\ge \int_\Omega |\varphi|^\sigma\,dx.
\]
By using \eqref{sorbola} for $\varphi_n$, taking the limit as $n$ goes to $\infty$ and using the last two equations in display, we get that \eqref{sorbola} holds for $\varphi$, as well.
\end{proof}

\begin{coro}
\label{coro:nakata}
Let $2<p<q_0<p^*$ and let $\Omega\subset\mathbb{R}^N$ be an open bounded connected set, with boundary of class $C^{1,\alpha}$, for some $0<\alpha<1$. For every $p\le q\le q_0$, let $u_q\in W^{1,p}_0(\Omega)$ be a positive minimizer of \eqref{eq:pq}.
Then there exists an exponent $\theta=\theta(p)\in(1,2)$ and a constant $C=(N,p,q_0,\alpha,\Omega)>0$ such that
\[
\|\varphi\|_{W^{1,\theta}(\Omega)}\le C\, \left(\int_\Omega |\nabla u_q|^{p-2}\, |\nabla \varphi|^2 \,dx\right)^\frac{1}{2},\qquad \mbox{ for every }\varphi\in W^{1,p}_0(\Omega), \, q\in[p,q_0].
\] 
\end{coro}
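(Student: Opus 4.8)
The plan is to estimate the two parts of the $W^{1,\theta}$ norm separately: the gradient part by combining H\"older's inequality with a uniform bound for a negative power of $|\nabla u_q|$, and the lower order part by the Poincar\'e inequality on $W^{1,\theta}_0(\Omega)$.

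First I would fix the exponent. Since $p>2$, the number $\frac{2\,(p-1)}{2\,p-3}$ lies strictly between $1$ and $2$, so one may choose $\theta=\theta(p)\in\left(1,\frac{2\,(p-1)}{2\,p-3}\right)$, for instance $\theta=\frac12\left(1+\frac{2\,(p-1)}{2\,p-3}\right)$. Setting
\[
r=\frac{\theta\,(p-2)}{2-\theta},
\]
an elementary manipulation (using $2-\theta>0$ and $2\,p-3>0$) shows that $\theta<\frac{2\,(p-1)}{2\,p-3}$ is exactly equivalent to $r<p-1$, which is the integrability threshold appearing in the Damascelli--Sciunzi estimate of Appendix \ref{sec:C}.

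Next, given $\varphi\in W^{1,p}_0(\Omega)$, I would write the integrand as
\[
|\nabla\varphi|^\theta=\left(|\nabla u_q|^{p-2}\,|\nabla\varphi|^2\right)^{\theta/2}\,|\nabla u_q|^{-\theta\,(p-2)/2},
\]
and apply H\"older's inequality with the conjugate exponents $2/\theta$ and $2/(2-\theta)$, obtaining
\[
\int_\Omega|\nabla\varphi|^\theta\,dx\le\left(\int_\Omega|\nabla u_q|^{p-2}\,|\nabla\varphi|^2\,dx\right)^{\theta/2}\,\left(\int_\Omega|\nabla u_q|^{-r}\,dx\right)^{(2-\theta)/2}.
\]
The heart of the matter is to bound $\int_\Omega|\nabla u_q|^{-r}\,dx$ by a constant independent of $q\in[p,q_0]$. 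This is precisely what Theorem \ref{teo:sweden} provides: since $r<p-1$, it yields $\sup_{x\in\Omega}\int_\Omega|\nabla u_q|^{-r}\,|x-y|^{-\gamma}\,dy\le\mathcal S$ with $\mathcal S$ depending only on $\alpha,N,p,q_0,\Omega,r,\gamma$ (and thus uniform in $q$); taking $\gamma=0$, or a small $\gamma>0$ together with a covering of $\Omega$ by finitely many balls of radius $<1$ if one wants to stay in the admissible range $\gamma<N-2$, gives $\int_\Omega|\nabla u_q|^{-r}\,dx\le C$ uniformly. I expect this uniform negative-power integrability to be the only genuinely delicate point; it is the reason the $C^{1,\alpha}$ regularity of $\partial\Omega$ is needed.

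Finally, since $\theta<2<p$ and $\Omega$ is bounded, $W^{1,p}_0(\Omega)\subset W^{1,\theta}_0(\Omega)$, so the Poincar\'e inequality on $W^{1,\theta}_0(\Omega)$ gives $\|\varphi\|_{L^\theta(\Omega)}\le C_\Omega\,\|\nabla\varphi\|_{L^\theta(\Omega)}$ (alternatively one could bound $\|\varphi\|_{L^\theta}$ directly by $\|\varphi\|_{L^\sigma}$ via Theorem \ref{teo:WSI}, since $\theta<2<\sigma$ and $|\Omega|<\infty$). Combining this with the displayed gradient estimate and raising to the power $1/\theta$ produces
\[
\|\varphi\|_{W^{1,\theta}(\Omega)}\le C\,\left(\int_\Omega|\nabla u_q|^{p-2}\,|\nabla\varphi|^2\,dx\right)^{1/2},
\]
with $C=C(N,p,q_0,\alpha,\Omega)>0$, which is the assertion. (If the right-hand side is infinite the inequality is trivial; for $\varphi\in W^{1,p}_0(\Omega)$ it is finite by H\"older, exactly as observed in the proof of Theorem \ref{teo:WSI}.)
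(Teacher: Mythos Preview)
Your proof is correct and follows essentially the same route as the paper: the same choice of $\theta\in\left(1,\frac{2(p-1)}{2p-3}\right)$, the same H\"older splitting with exponents $2/\theta$ and $2/(2-\theta)$, the same appeal to Theorem~\ref{teo:sweden} (via \eqref{DS}, i.e.\ the case $\gamma=0$, which is already admissible in every dimension), and the same use of the Poincar\'e inequality on $W^{1,\theta}_0(\Omega)$.
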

\begin{proof}
We take $1<\theta <(2\,p-2)/(2\,p-3)$. Then H\"older's inequality with conjugate exponents 
\[
2/\theta\qquad \mbox{ and }\qquad 2/(2-\theta), 
\]
implies
\[
\begin{split}
\left(\int_\Omega |\nabla \varphi|^\theta\, dx\right)^\frac{1}{\theta}
&\leq \left(\int_\Omega |\nabla u_q|^{p-2}\,|\nabla \varphi|^2\, dx\right)^\frac{1}{2}\,\left(\int_\Omega \frac{1}{|\nabla u_q|^{\frac{\theta}{2-\theta}(p-2)}}\, dx\right)^\frac{2-\theta}{2\,\theta}\\
&\leq \widetilde{\mathcal{S}}^\frac{2-\theta}{2\theta}\,\left(\int_\Omega |\nabla u_q|^{p-2}\,|\nabla \varphi|^2\, dx\right)^\frac{1}{2},
\end{split}
\]
where we have used that 
$$
\frac{\theta}{2-\theta}\,(p-2)<(p-1),
$$
which allows us to use estimate \eqref{DS} from Theorem \ref{teo:sweden}, with $r=(p-2)\,\theta/(2-\theta)$. Since $\Omega$ is bounded, by H\"older's inequality we have that $W^{1,p}_0(\Omega)\subset W^{1,\theta}_0(\Omega)$, with continuous inclusion. Moreover, by Poincar\'e inequality 
\[
\|\nabla\varphi\|_{L^\theta(\Omega)}\qquad \mbox{ and }\qquad \|\varphi\|_{W^{1,\theta}(\Omega)},
\]
are equivalent norms on $W^{1,\theta}_0(\Omega)$. These facts conclude the proof.
\end{proof}

A consequence of Theorem \ref{teo:WSI} and Corollary \ref{coro:nakata} is the following compactness result with ``varying weights''. This naturally comes into play when linearizing the equation \eqref{eq:eq}, as explained in the Introduction.

\begin{coro}[Uniform compact embedding]
\label{lem:compact}
Let $2<p<q_0<p^*$ and let $\Omega\subset\mathbb{R}^N$ be an open bounded connected set, with boundary of class $C^{1,\alpha}$, for some $0<\alpha<1$. We take a sequence $\{q_n\}_{n\in\mathbb{N}}\subset [p,q_0]$ and consider accordingly $u_n\in W^{1,p}_0(\Omega)$ a positive minimizer of \eqref{eq:pq} with $q=q_n$. 
If $\{\phi_n\}_{n\in\mathbb{N}}\subset W_0^{1,p}(\Omega) $ is a sequence of functions satisfying
$$
\int_\Omega |\nabla u_n|^{p-2}\, |\nabla \phi_n|^2 \,dx \leq C, \qquad \mbox{ for every } n\in\mathbb{N},
$$
then
$\{\phi_n\}_{n\in\mathbb{N}}$ converges strongly in $L^2(\Omega)$ and weakly in $W^{1,\theta}_0(\Omega)$, up to a subsequence. Here $\theta$ is the same exponent as in Corollary \ref{coro:nakata}.
\end{coro}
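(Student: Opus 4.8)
The plan is to combine the uniform weighted Sobolev inequality from Theorem~\ref{teo:WSI} with the uniform weighted $W^{1,\theta}$ estimate from Corollary~\ref{coro:nakata} and then invoke standard (non-weighted) compactness. First I would apply Corollary~\ref{coro:nakata}: since each $u_n$ is a positive minimizer of \eqref{eq:pq} with exponent $q_n\in[p,q_0]$, the corollary (applied with the same $q_0$) gives a single exponent $\theta=\theta(p)\in(1,2)$ and a single constant $C'=C'(N,p,q_0,\alpha,\Omega)>0$, independent of $n$, such that
\[
\|\phi_n\|_{W^{1,\theta}(\Omega)}\le C'\,\left(\int_\Omega |\nabla u_n|^{p-2}\,|\nabla \phi_n|^2\,dx\right)^\frac{1}{2}\le C'\,\sqrt{C},\qquad \mbox{ for every } n\in\mathbb{N}.
\]
Thus $\{\phi_n\}_{n\in\mathbb{N}}$ is bounded in $W^{1,\theta}_0(\Omega)$. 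Since $W^{1,\theta}_0(\Omega)$ is reflexive (as $1<\theta<\infty$), up to a subsequence $\phi_n\rightharpoonup \phi$ weakly in $W^{1,\theta}_0(\Omega)$ for some $\phi\in W^{1,\theta}_0(\Omega)$.

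Next I would upgrade this weak convergence to strong convergence in $L^2(\Omega)$. The natural route is to use the uniform weighted Sobolev inequality \eqref{sorbola}: fix some $\sigma$ with $2<\sigma<\sigma_0$ (this is possible since $\sigma_0>2$ by \eqref{sigma0}, as $p>2$ forces $(2p-3)N>1$), so Theorem~\ref{teo:WSI} provides a constant $\mathcal{T}>0$ independent of $n$ with
\[
\mathcal{T}\,\left(\int_\Omega |\phi_n|^\sigma\,dx\right)^\frac{2}{\sigma}\le \int_\Omega |\nabla u_n|^{p-2}\,|\nabla \phi_n|^2\,dx\le C.
\]
Hence $\{\phi_n\}_{n\in\mathbb{N}}$ is also bounded in $L^\sigma(\Omega)$ with $\sigma>2$. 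On the other hand, the compact Sobolev embedding $W^{1,\theta}_0(\Omega)\hookrightarrow L^r(\Omega)$ is compact for every $r<\theta^*$ (when $\theta<N$; if $\theta\ge N$ it is compact into every $L^r$ with $r<\infty$), and in particular it is compact into $L^1(\Omega)$, so along a further subsequence $\phi_n\to\phi$ strongly in $L^1(\Omega)$ and a.e. in $\Omega$. Interpolating between $L^1$ and $L^\sigma$: for $1<2<\sigma$ write $\tfrac{1}{2}=\tfrac{1-\tau}{1}+\tfrac{\tau}{\sigma}$ and use $\|\phi_n-\phi\|_{L^2(\Omega)}\le \|\phi_n-\phi\|_{L^1(\Omega)}^{1-\tau}\,\|\phi_n-\phi\|_{L^\sigma(\Omega)}^{\tau}$, where $\|\phi_n-\phi\|_{L^\sigma(\Omega)}$ stays bounded (using that $\phi\in L^\sigma$ by Fatou applied to the a.e.\ convergent subsequence) and $\|\phi_n-\phi\|_{L^1(\Omega)}\to 0$. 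This yields $\phi_n\to\phi$ strongly in $L^2(\Omega)$.

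The step I expect to be the main obstacle is making sure the two uniform estimates I am quoting genuinely apply \emph{simultaneously} to the same subsequence and with $n$-independent constants: Corollary~\ref{coro:nakata} and Theorem~\ref{teo:WSI} each require the $u_n$ to be positive minimizers of \eqref{eq:pq} with exponents in a \emph{fixed} interval $[p,q_0]$, and the constants $\theta$, $C'$, $\mathcal{T}$ there depend only on $N,p,q_0,\alpha,\Omega$ (and the fixed $\sigma$), \emph{not} on the particular $q_n$ nor on $n$ --- this is precisely the point of stating those results uniformly, so once the hypotheses are in place the constants are automatically uniform. A minor technical care point is the case distinction $\theta<N$ versus $\theta\ge N$ in the Rellich--Kondrachov embedding, but in either case compactness into $L^1(\Omega)$ holds for a bounded-in-$W^{1,\theta}_0$ sequence on a bounded domain, so no genuine difficulty arises. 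Finally, the weak convergence in $W^{1,\theta}_0(\Omega)$ is preserved along the further subsequence extracted in the interpolation step, since passing to a subsequence does not destroy weak convergence, so the two conclusions hold along one common subsequence.
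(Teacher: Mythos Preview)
Your proof is correct and follows essentially the same route as the paper: boundedness in $W^{1,\theta}_0(\Omega)$ via Corollary~\ref{coro:nakata}, weak compactness plus Rellich--Kondra\v{s}ov, and then interpolation using the uniform $L^\sigma$ bound from Theorem~\ref{teo:WSI} to upgrade to strong $L^2$ convergence. The only cosmetic difference is that the paper interpolates between $L^\theta$ and $L^\sigma$ and shows the sequence is Cauchy in $L^2$, whereas you interpolate between $L^1$ and $L^\sigma$ and argue directly against the limit $\phi$ (using Fatou to place $\phi$ in $L^\sigma$); both variants work equally well.
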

\begin{proof} 
The assumption, in conjunction with Corollary \ref{coro:nakata}, entails that $\{\phi_n\}_{n\in\mathbb{N}}$ is a bounded sequence in $W^{1,\theta}_0(\Omega)$. By the classical Rellich-Kondra\v{s}ov Theorem, we get that this sequence converges weakly in $W^{1,\theta}(\Omega)$ and strongly in $L^\theta(\Omega)$, up to a subsequence. Moreover, since $W^{1,\theta}_0(\Omega)$ is also weakly closed, we get that the limit still belongs to $W^{1,\theta}_0(\Omega)$.
\par
In order to get the strong $L^2$ convergence, we observe that, if we denote by $\sigma_0$ the exponent of Theorem \ref{teo:WSI}, for every $2<\sigma<\sigma_0$ and $n,m\in\mathbb{N}$ we have
\[
\begin{split}
\|\phi_n-\phi_m\|_{L^2(\Omega)}&\le \|\phi_n-\phi_m\|_{L^\sigma(\Omega)}^{1-\tau}\,\|\phi_n-\phi_m\|_{L^\theta(\Omega)}^\tau\\
&\le \Big(\|\phi_n\|_{L^\sigma(\Omega)}+\|\phi_m\|_{L^\sigma(\Omega)}\Big)^{1-\tau}\,\|\phi_n-\phi_m\|_{L^\theta(\Omega)}^\tau\\
&\le \left(\frac{1}{\mathcal{T}}\right)^\frac{1-\tau}{2}\,\left(\left(\int_{\Omega} |\nabla u_n|^{p-2}\,|\nabla \phi_n|^2\,dx\right)^\frac{1}{2}+\left(\int_{\Omega} |\nabla u_m|^{p-2}\,|\nabla \phi_m|^2\,dx\right)^\frac{1}{2}\right)^{1-\tau}\\
&\times \|\phi_n-\phi_m\|_{L^\theta(\Omega)}^\tau\\
&\le \left(\frac{4\,C}{\mathcal{T}}\right)^\frac{1-\tau}{2}\,\|\phi_n-\phi_m\|_{L^\theta(\Omega)}^\tau.
\end{split}
\]
We used interpolation in Lebesgue spaces and the uniform weighted Sobolev inequality of Theorem \ref{teo:WSI}, applied to $\phi_n,\phi_m\in W^{1,p}_0(\Omega)$. The above estimate and the strong convergence in $L^\theta(\Omega)$ show that $\{\phi_n\}_{n\in\mathbb{N}}$ is a Cauchy sequence in $L^2(\Omega)$ and thus it strongly converges. By uniqueness of the limit, we conclude.
\end{proof}

\section{A weighted linear eigenvalue problem}
\label{sec:3}

In this section, we treat a {\it weighted} linear eigenvalue problem, that naturally arises when linearizing the quasilinear equation \eqref{eq:eq}. This is decisive in the proof of our main result. 
\par
It is convenient to introduce the notation
\[
\mathcal{H}(z)=\frac{1}{p}\,|z|^p,\qquad \mbox{ for every } z\in\mathbb{R}^N.
\]
Then we observe that 
\[
\nabla \mathcal{H}(z)=|z|^{p-2}\,z,\qquad \mbox{ for every } z\in\mathbb{R}^N,
\]
and
\begin{equation}
\label{hessian}
D^2 \mathcal{H}(z)=|z|^{p-2}\,\mathrm{Id}+(p-2)\,|z|^{p-4}\,z\otimes z,\qquad \mbox{ for every } z\in\mathbb{R}^N.
\end{equation}
In particular, we have the following facts for $p>2$
\begin{equation}
\label{high}
 D^2\mathcal{H}(z)\,z=(p-1)\,|z|^{p-2}\,z,\qquad |z|^{p-2}\,|\xi|^2\le \langle D^2\mathcal{H}(z)\,\xi,\xi\rangle\le (p-1)\,|z|^{p-2}\,|\xi|^2,\qquad \mbox{ for } z,\xi\in\mathbb{R}^N.
\end{equation}
We will repeatedly use the following elementary inequality.
\begin{lm}
\label{lm:many}
Let $2<p<\infty$ and let $\Omega\subset\mathbb{R}^N$ be an open set. For every $v,w,\varphi\in W^{1,1}_{\rm loc}(\Omega)$, we have 
\[
\left|\langle D^2\mathcal{H}(\nabla \varphi)\,\nabla v,\nabla v\rangle-\langle D^2\mathcal{H}(\nabla \varphi)\,\nabla w,\nabla w\rangle\right|\le (p-1)\,|\nabla \varphi|^{p-2}\,|\nabla v-\nabla w|\,\Big(|\nabla v|+|\nabla w|\Big),\ \mbox{ a.\,e. on }\Omega.
\]
\end{lm}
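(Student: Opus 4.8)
The plan is to reduce the asserted inequality to a purely pointwise (in $x$) linear-algebra statement about the symmetric matrix $A:=D^2\mathcal{H}(\nabla\varphi(x))$ and the vectors $a:=\nabla v(x)$, $b:=\nabla w(x)$. These are defined for a.e.\ $x\in\Omega$ since $v,w,\varphi\in W^{1,1}_{\rm loc}(\Omega)$, and on the null set $\{\nabla\varphi=0\}$ we simply read $A=0$ from \eqref{hessian} (recall $p>2$), so that there both sides of the claimed inequality vanish. Hence it suffices to prove, for every symmetric positive semidefinite $N\times N$ matrix $A$ and every $a,b\in\mathbb{R}^N$, the bound
\[
\big|\langle A\,a,a\rangle-\langle A\,b,b\rangle\big|\le \|A\|\,|a-b|\,\big(|a|+|b|\big),
\]
where $\|A\|$ denotes the operator norm, and then to insert the estimate on $\|A\|$ coming from \eqref{high}.

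First I would record the elementary bilinear identity, valid for any symmetric matrix $A$,
\[
\langle A\,a,a\rangle-\langle A\,b,b\rangle=\langle A\,(a-b),a+b\rangle,
\]
obtained by adding and subtracting $\langle A\,b,a\rangle$ and using the symmetry of $A$ in the form $\langle A\,b,a-b\rangle=\langle A\,(a-b),b\rangle$. Next, by \eqref{high}, for $p>2$ the matrix $A=D^2\mathcal{H}(\nabla\varphi)$ is symmetric and positive semidefinite, with $\langle A\,\xi,\xi\rangle\le (p-1)\,|\nabla\varphi|^{p-2}\,|\xi|^2$ for every $\xi\in\mathbb{R}^N$; since for a symmetric matrix the operator norm coincides with the largest eigenvalue, this yields $\|A\|\le (p-1)\,|\nabla\varphi|^{p-2}$. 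Combining the identity above with the Cauchy--Schwarz inequality and $|a+b|\le|a|+|b|$ gives
\[
\big|\langle A\,(a-b),a+b\rangle\big|\le \|A\|\,|a-b|\,|a+b|\le (p-1)\,|\nabla\varphi|^{p-2}\,|a-b|\,\big(|a|+|b|\big),
\]
and substituting back $a=\nabla v$, $b=\nabla w$ we obtain exactly the asserted inequality, holding a.e.\ on $\Omega$.

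There is no genuine obstacle here; the only points requiring a word of care are the null set $\{\nabla\varphi=0\}$ and the fact that membership in $W^{1,1}_{\rm loc}$ only provides gradients almost everywhere, both of which are harmless. If one prefers to avoid invoking the spectral characterization of the operator norm, one may instead apply the Cauchy--Schwarz inequality directly for the positive semidefinite bilinear form $(\xi,\eta)\mapsto\langle A\,\xi,\eta\rangle$, namely $|\langle A\,(a-b),a+b\rangle|\le \langle A\,(a-b),a-b\rangle^{1/2}\,\langle A\,(a+b),a+b\rangle^{1/2}$, and then estimate each factor by means of \eqref{high}; this route is self-contained and leads to the same constant $(p-1)$.
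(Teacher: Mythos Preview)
Your proof is correct and follows essentially the same route as the paper: both reduce the statement to a pointwise linear-algebra estimate of the form $|\langle A\,a,a\rangle-\langle A\,b,b\rangle|\le |A(a-b)|\,(|a|+|b|)$ for the symmetric matrix $A=D^2\mathcal{H}(\nabla\varphi)$, and then bound the action of $A$ by $(p-1)\,|\nabla\varphi|^{p-2}$.

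The only minor difference is how that pointwise bound is obtained. The paper appeals to its Lemma~\ref{lm:retarded}, which proves $|\langle A\,z,z\rangle-\langle A\,w,w\rangle|\le |A(z-w)|\,(|z|+|w|)$ via the integral parametrization $z_t=(1-t)\,w+t\,z$ and differentiation under the integral; it then bounds $|D^2\mathcal{H}(\nabla\varphi)\,\xi|$ directly from the explicit formula \eqref{hessian}. You instead use the one-line symmetric factorization $\langle A\,a,a\rangle-\langle A\,b,b\rangle=\langle A(a-b),a+b\rangle$ and bound the operator norm of $A$ through \eqref{high}. Your version is slightly more self-contained, since it dispenses with the auxiliary Lemma~\ref{lm:retarded}; the paper's version, on the other hand, does not require $A$ to be symmetric at the intermediate step (though symmetry is of course available here). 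Both arrive at the same constant $p-1$.
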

\begin{proof}
By Lemma \ref{lm:retarded}, we have 
\[
\left|\langle D^2\mathcal{H}(\nabla\varphi)\,\nabla v,\nabla v\rangle-\langle D^2\mathcal{H}(\nabla \varphi)\,\nabla w,\nabla w\rangle\right|\le |D^2\mathcal{H}(\nabla \varphi)\,(\nabla v-\nabla w)|\,\Big(|\nabla v|+|\nabla w|\Big).
\]
By using that the Hessian matrix is given by \eqref{hessian},
we get 
\[
|D^2\mathcal{H}(\nabla \varphi)\,(\nabla v-\nabla w)|\le (p-1)\,|\nabla \varphi|^{p-2}\,|\nabla v-\nabla w|.
\]
By using this inequality in the first estimate, we conclude the proof.
\end{proof}
\begin{prop}
\label{prop:primo}
Let $2<p<\infty$ and let $\Omega\subset\mathbb{R}^N$ be an open connected set with finite volume. Let $u\in W^{1,p}_0(\Omega)$ be the unique positive extremal of 
\begin{equation}
\label{lambda1}
\lambda_p(\Omega)=\min_{\varphi\in W^{1,p}_0(\Omega)}\left\{\int_\Omega |\nabla \varphi|^p\, dx\, :\, \int_\Omega |\varphi|^p=1\right\}.
\end{equation}
By setting 
\[
\lambda(\Omega;u)=\inf_{\varphi\in C^\infty_0(\Omega)}\left\{\int_\Omega \langle D^2\mathcal{H}(\nabla u)\,\nabla \varphi,\nabla \varphi\rangle\,dx\, :\, \int_\Omega u^{p-2}\,|\varphi|^2\,dx=1\right\},
\] 
we have
\[
\lambda(\Omega;u)= (p-1)\,\lambda_p(\Omega).
\]
\end{prop}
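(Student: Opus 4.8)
The plan is to prove the two inequalities $\lambda(\Omega;u)\le (p-1)\,\lambda_p(\Omega)$ and $\lambda(\Omega;u)\ge (p-1)\,\lambda_p(\Omega)$ separately. Throughout I will use that $u$ solves $-\Delta_p u=-\mathrm{div}(\nabla\mathcal{H}(\nabla u))=\lambda_p(\Omega)\,u^{p-1}$ in $\Omega$ in the weak sense, that it is normalized by $\int_\Omega u^p\,dx=1$, and that $u>0$ on $\Omega$ (by connectedness and the strong minimum principle), with $u\in C^{1}_{\mathrm{loc}}(\Omega)$ by the interior regularity theory for $p$-Laplace--type equations. In particular $\inf_K u>0$ for every compact $K\subset\Omega$.

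\medskip

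\emph{Upper bound.} By the first relation in \eqref{high}, $\langle D^2\mathcal{H}(\nabla u)\,\nabla u,\nabla u\rangle=(p-1)\,|\nabla u|^p$, so the normalization gives
\[
\int_\Omega \langle D^2\mathcal{H}(\nabla u)\,\nabla u,\nabla u\rangle\,dx=(p-1)\,\lambda_p(\Omega),\qquad \int_\Omega u^{p-2}\,u^2\,dx=1.
\]
Thus $u$ formally realizes the value $(p-1)\,\lambda_p(\Omega)$ in the Rayleigh quotient defining $\lambda(\Omega;u)$; since the infimum is restricted to $C^\infty_0(\Omega)$, I would take $\{\varphi_n\}_n\subset C^\infty_0(\Omega)$ with $\varphi_n\to u$ in $W^{1,p}_0(\Omega)$ and pass to the limit. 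The denominator converges because $u^{p-2}\in L^{p/(p-2)}(\Omega)$ and $\varphi_n^2-u^2=(\varphi_n-u)(\varphi_n+u)\to 0$ in $L^{p/2}(\Omega)$; the numerator converges by Lemma~\ref{lm:many} (applied with the third function equal to $u$), integrated and bounded via H\"older's inequality with exponents $p/(p-2),\,p,\,p$. Hence $\lambda(\Omega;u)\le (p-1)\,\lambda_p(\Omega)$.

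\medskip

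\emph{Lower bound: a weighted Picone identity.} Fix $\varphi\in C^\infty_0(\Omega)$ and set $\psi:=\varphi/u$. By the observations above, $\psi\in W^{1,p}(\Omega)$ with compact support in $\Omega$, and $u\,\psi^2=\varphi^2/u\in W^{1,p}_0(\Omega)$ is an admissible test function in the weak formulation of the equation for $u$. Expanding $\nabla\varphi=\psi\,\nabla u+u\,\nabla\psi$, using the two relations in \eqref{high} (in particular $D^2\mathcal{H}(\nabla u)\,\nabla u=(p-1)\,|\nabla u|^{p-2}\,\nabla u$) and the elementary identity $2\,u\,\psi\,\langle\nabla u,\nabla\psi\rangle=\langle\nabla u,\nabla(u\,\psi^2)\rangle-\psi^2\,|\nabla u|^2$, I expect to reach the pointwise identity
\[
\langle D^2\mathcal{H}(\nabla u)\,\nabla\varphi,\nabla\varphi\rangle=(p-1)\,|\nabla u|^{p-2}\,\left\langle\nabla u,\nabla\!\left(\frac{\varphi^2}{u}\right)\right\rangle+u^2\,\langle D^2\mathcal{H}(\nabla u)\,\nabla\psi,\nabla\psi\rangle,
\]
valid almost everywhere on $\Omega$. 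Integrating, testing the equation $-\Delta_p u=\lambda_p(\Omega)\,u^{p-1}$ with $\varphi^2/u$ so that the first term on the right becomes $(p-1)\,\lambda_p(\Omega)\int_\Omega u^{p-2}\,\varphi^2\,dx$, and discarding the last term (non-negative since $D^2\mathcal{H}(z)\ge 0$ for $p>2$ by \eqref{high}), I obtain
\[
\int_\Omega \langle D^2\mathcal{H}(\nabla u)\,\nabla\varphi,\nabla\varphi\rangle\,dx\ge (p-1)\,\lambda_p(\Omega)\int_\Omega u^{p-2}\,\varphi^2\,dx.
\]
Taking the infimum over $\varphi\in C^\infty_0(\Omega)$ yields $\lambda(\Omega;u)\ge (p-1)\,\lambda_p(\Omega)$, and together with the upper bound this proves the claim.

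\medskip

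\emph{The main obstacle.} The delicate step is the lower bound, namely making the substitution $\psi=\varphi/u$ fully rigorous: one must check that $\psi$ and $\varphi^2/u$ are genuine Sobolev functions, that the Leibniz rule $\nabla(u\,\psi)=\psi\,\nabla u+u\,\nabla\psi$ applies, that $\varphi^2/u\in W^{1,p}_0(\Omega)$ qualifies as a test function in the equation, and that every integral above is finite. This is precisely where the positivity of $u$ and its interior $C^1$ regularity are used, through the bound $\inf_K u>0$ on the compact support $K$ of $\varphi$. Once these technical points are settled, the remainder is an algebraic manipulation combined with the non-negativity of the Hessian of $\mathcal{H}$.
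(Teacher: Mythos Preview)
Your proof is correct and follows essentially the same strategy as the paper: approximate $u$ by smooth functions for the upper bound (using Lemma~\ref{lm:many} and H\"older to pass to the limit in both numerator and denominator), and for the lower bound test the equation for $u$ with $\varphi^2/u$ and invoke a Picone-type identity together with the positive semidefiniteness of $D^2\mathcal{H}(\nabla u)$. The paper packages the identity as the abstract Lemma~\ref{lm:picone} applied with $A=D^2\mathcal{H}(\nabla u)$ and the rewritten equation \eqref{ELubis}, whereas you expand the quadratic form directly via $\psi=\varphi/u$; since $D^2\mathcal{H}(\nabla u)\,\nabla u=(p-1)\,|\nabla u|^{p-2}\,\nabla u$ and $\varphi\,\nabla u/u-\nabla\varphi=-u\,\nabla\psi$, the two computations coincide line by line.
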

\begin{proof}
The inequality
\begin{equation}
\label{uno}
\lambda(\Omega;u)\le (p-1)\, \lambda_p(\Omega),
\end{equation}
is straightforward. Indeed, for every $\varepsilon>0$, we take a non-negative $u_\varepsilon\in C^\infty_0(\Omega)$ such that
\[
\int_\Omega |\nabla u_\varepsilon|^p\,dx<\lambda_p(\Omega)+\varepsilon\qquad \mbox{ and }\qquad \int_\Omega u_\varepsilon^p\,dx=1.
\] 
For such a function, we have
\[
\lim_{\varepsilon\to 0} \int_\Omega |\nabla u_\varepsilon-\nabla u|^p\,dx=0,
\]
and thus in particular we have convergence in $L^p(\Omega)$, as well.
We wish to use the function 
\[
\widetilde{u}_\varepsilon:=\frac{u_\varepsilon}{\displaystyle\left(\int_\Omega u^{p-2}\,u_\varepsilon^2\,dx\right)^\frac{1}{2}},
\]
as a competitor in the problem defining $\lambda(\Omega;u)$.
At this aim, it is not difficult to see that 
\begin{equation}
\label{odilon}
\lim_{\varepsilon\to 0} \int_\Omega u^{p-2}\,\widetilde{u}_\varepsilon^2\,dx=\int_\Omega u^p\,dx=1.
\end{equation}
We can also prove that 
\begin{equation}
\label{odilon2}
\lim_{\varepsilon\to 0} \int_\Omega \langle D^2 \mathcal{H}(\nabla u)\,\nabla u_\varepsilon,\nabla u_\varepsilon\rangle\,dx=\int_\Omega \langle D^2 \mathcal{H}(\nabla u)\,\nabla u,\nabla u\rangle\,dx.
\end{equation}
Indeed, by applying Lemma \ref{lm:many}, we get
\[
\begin{split}
\Bigg| \int_\Omega \langle D^2 \mathcal{H}(\nabla u)\,\nabla u_\varepsilon,\nabla u_\varepsilon\rangle\,dx&- \int_\Omega \langle D^2 \mathcal{H}(\nabla u)\,\nabla u,\nabla u\rangle\,dx\Bigg|\\
&\le (p-1)\,\int_\Omega |\nabla u|^{p-2}\,|\nabla u_\varepsilon-\nabla u|\,\Big(|\nabla u_\varepsilon|+|\nabla u|\Big)\,dx.
\end{split}
\]
Then \eqref{odilon2} follows by using H\"older's inequality.
\par
By using the function $\widetilde u_\varepsilon$, taking the limit as $\varepsilon$ go to $0$, using \eqref{odilon} and \eqref{odilon2} and finally recalling that thanks to \eqref{high} we have
\[
\langle D^2 \mathcal{H}(z)\,z,z\rangle=(p-1)\,|z|^p,\qquad \mbox{ for every }z\in\mathbb{R}^N,
\] 
we get
\[
\lambda(\Omega;u)\le (p-1)\,\lambda_p(\Omega).
\]
Thus \eqref{uno} is established.
\vskip.2cm\noindent
For the converse inequality, we first recall that $u$ satisfies
\begin{equation}
\label{ELu}
\int_\Omega \langle |\nabla u|^{p-2}\,\nabla u,\nabla \varphi\rangle\,dx=\lambda_p(\Omega)\,\int_\Omega u^{p-1}\,\varphi\,dx, \qquad \mbox{ for every } \varphi\in W^{1,p}_0(\Omega),
\end{equation}
by minimality. By using \eqref{high},
this can be also rewritten as
\begin{equation}
\label{ELubis}
\int_\Omega \langle D^2\mathcal{H}(\nabla u)\,\nabla u,\nabla \varphi\rangle\,dx=(p-1)\,\lambda_p(\Omega)\,\int_\Omega u^{p-1}\,\varphi\,dx, \qquad \mbox{ for every } \varphi\in W^{1,p}_0(\Omega).
\end{equation}
We take $\varepsilon>0$ and $\varphi_\varepsilon\in C^\infty_0(\Omega)$ such that
\[
\int_\Omega \langle D^2\mathcal{H}(\nabla u)\,\nabla \varphi_\varepsilon,\nabla \varphi_\varepsilon\rangle\,dx<\lambda(\Omega;u)+\varepsilon\qquad \mbox{ and }\qquad \int_\Omega u^{p-2}\,\varphi_\varepsilon^2\,dx=1.
\]
Then we insert the test function\footnote{This is admissible, since $\varphi$ is compactly supported in $\Omega$, while by the minimum principle $u$ is bounded away from zero on every $\Omega'\Subset\Omega$.} $\varphi_\varepsilon^2/u$ in the equation \eqref{ELubis}, so to get
\begin{equation}
\label{prepicone}
\begin{split}
(p-1)\,\lambda_{p}(\Omega)\,\int_\Omega u^{p-1}\,\frac{\varphi^2_\varepsilon}{u}\,dx&=\int_\Omega  \left\langle D^2\mathcal{H}(\nabla u)\,\nabla u,\nabla\left(\frac{\varphi_\varepsilon^2}{u}\right)\right\rangle\,dx.\\
\end{split}
\end{equation} 
We now use Picone's identity of Lemma \ref{lm:picone} with the choice $A=D^2\mathcal{H}(\nabla u)$.
This gives
\[
\begin{split}
\left\langle D^2\mathcal{H}(\nabla u)\,\nabla u,\nabla\left(\frac{\varphi_\varepsilon^2}{u}\right)\right\rangle&=\langle D^2\mathcal{H}(\nabla u)\,\nabla \varphi_\varepsilon,\nabla \varphi_\varepsilon\rangle\\
&-\left\langle D^2\mathcal{H}(\nabla u)\,\left(\varphi_\varepsilon\,\frac{\nabla u}{u}-\nabla \varphi_\varepsilon\right),\left(\varphi_\varepsilon\,\frac{\nabla u}{u}-\nabla \varphi_\varepsilon\right)\right\rangle.
\end{split}
\]
By integrating over $\Omega$ and using the resulting identity in \eqref{prepicone}, we get
\[
\begin{split}
(p-1)\,\lambda_{p}(\Omega)\,\int_\Omega u^{p-2}\,\varphi^2_\varepsilon\,dx&=\int_\Omega \langle D^2 \mathcal{H}(\nabla u)\,\nabla \varphi_\varepsilon,\nabla \varphi_\varepsilon\rangle\,dx\\
&-\int_\Omega \left\langle D^2\mathcal{H}(\nabla u)\,\left(\varphi_\varepsilon\,\frac{\nabla u}{u}-\nabla \varphi_\varepsilon\right),\left(\varphi_\varepsilon\,\frac{\nabla u}{u}-\nabla \varphi_\varepsilon\right)\right\rangle\,dx\\
&\leq \lambda(\Omega;u)+\varepsilon,
\end{split}
\]
thanks to the fact that $D^2 \mathcal{H}(\nabla u)$ is positive semidefinite.
By recalling that 
\[
\int_\Omega u^{p-2}\,\varphi_\varepsilon^2\,dx=1,
\] 
and using the arbitrariness of $\varepsilon>0$, we finally get the desired conclusion.
\end{proof}
\begin{defi}
\label{defi:spaziodimerda}
For $p>2$, with the notations of Proposition \ref{prop:primo}, we define the weighted Sobolev space
\[
X^{1,2}(\Omega;|\nabla u|^{p-2}):=\left\{\varphi\in W^{1,1}_{\rm loc}(\Omega)\cap L^2(\Omega)\, :\, \int_\Omega |\nabla u|^{p-2}\,|\nabla \varphi|^2\,dx<+\infty\right\},
\]
endowed with the natural norm
\[
\|\varphi\|_{X^{1,2}(\Omega;|\nabla u|^{p-2})}=\|\varphi\|_{L^2(\Omega)}
+\left(\int_\Omega |\nabla u|^{p-2}\,|\nabla \varphi|^2\,dx\right)^\frac{1}{2}.
\] 
Accordingly, we set $X^{1,2}_0(\Omega;|\nabla u|^{p-2})$ for the completion of $C^\infty_0(\Omega)$ with respect to this norm, as in \cite{CES} (see also \cite{Ta}).
\end{defi}
\begin{lm}
\label{lm:embeddo} 
Let $p>2$ and let $\Omega\subset\mathbb{R}^N$ be an open bounded connected set, with $C^{1,\alpha}$ boundary, for some $0<\alpha<1$.
With the notations of Proposition \ref{prop:primo}, we have
\[
X^{1,2}(\Omega;|\nabla u|^{p-2})=\left\{\varphi\in W^{1,1}(\Omega)\cap L^2(\Omega)\, :\, \int_\Omega |\nabla u|^{p-2}\,|\nabla \varphi|^2\,dx<+\infty\right\}.
\]
Moreover, we also have
\[
W^{1,p}_0(\Omega)\subset X^{1,2}_0(\Omega;|\nabla u|^{p-2})\subset W^{1,1}_0(\Omega),
\]
with continuous inclusions. Finally, 
\[
X^{1,2}_0(\Omega;|\nabla u|^{p-2})\subset X^{1,2}(\Omega;|\nabla u|^{p-2}).
\]
\end{lm}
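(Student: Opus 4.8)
The plan is to reduce all three assertions to a single integrability property of the extremal $u$ and then to H\"older's inequality. Since $u$ is a positive minimizer of \eqref{eq:pq} for the exponent $q=p$, estimate \eqref{DS} of Theorem~\ref{teo:sweden} applies with every $0\le r<p-1$, in particular with $r=p-2$, so that $|\nabla u|^{-(p-2)}\in L^1(\Omega)$. I would record this at the outset, since each of the three claims becomes essentially automatic once it is available.

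For the first identity, only the inclusion ``$\subseteq$'' requires an argument: given $\varphi\in W^{1,1}_{\rm loc}(\Omega)\cap L^2(\Omega)$ with $\int_\Omega|\nabla u|^{p-2}|\nabla\varphi|^2\,dx<+\infty$, I would write $|\nabla\varphi|=\big(|\nabla u|^{(p-2)/2}|\nabla\varphi|\big)\cdot|\nabla u|^{-(p-2)/2}$ and apply the Cauchy--Schwarz inequality together with the integrability above to get $\nabla\varphi\in L^1(\Omega)$; since $\Omega$ is bounded and $\varphi\in L^2(\Omega)\subset L^1(\Omega)$, this yields $\varphi\in W^{1,1}(\Omega)$. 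The reverse inclusion is trivial.

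For the chain of inclusions I would argue on $C^\infty_0(\Omega)$ and then pass to completions. The inclusion $W^{1,p}_0(\Omega)\subset X^{1,2}_0(\Omega;|\nabla u|^{p-2})$ comes from H\"older's inequality with exponents $p/(p-2)$ and $p/2$, which gives $\int_\Omega|\nabla u|^{p-2}|\nabla\varphi|^2\,dx\le\lambda_p(\Omega)^{(p-2)/p}\,\|\nabla\varphi\|_{L^p(\Omega)}^2$ (using $\int_\Omega|\nabla u|^p\,dx=\lambda_p(\Omega)$ from \eqref{lambda1}), while the $L^2$ part of the norm is controlled by $\|\nabla\varphi\|_{L^p(\Omega)}$ via the Poincar\'e inequality and $L^p(\Omega)\hookrightarrow L^2(\Omega)$ (recall $p>2$ and $\Omega$ bounded); hence $\|\cdot\|_{X^{1,2}(\Omega;|\nabla u|^{p-2})}\le C\,\|\nabla\cdot\|_{L^p(\Omega)}$ on $C^\infty_0(\Omega)$, and the identity extends to a continuous map $W^{1,p}_0(\Omega)\to X^{1,2}_0(\Omega;|\nabla u|^{p-2})$. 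The inclusion $X^{1,2}_0(\Omega;|\nabla u|^{p-2})\subset W^{1,1}_0(\Omega)$ follows from Corollary~\ref{coro:nakata} with $q=p$, which bounds $\|\cdot\|_{W^{1,\theta}(\Omega)}$ by $C\,\|\cdot\|_{X^{1,2}(\Omega;|\nabla u|^{p-2})}$ on $C^\infty_0(\Omega)$, followed by the elementary embedding $W^{1,\theta}_0(\Omega)\hookrightarrow W^{1,1}_0(\Omega)$, valid since $\theta>1$ and $\Omega$ is bounded. In both cases injectivity of the extended map is free, because the $X^{1,2}$ norm dominates the $L^2$ norm; in particular this realizes the abstract completion $X^{1,2}_0(\Omega;|\nabla u|^{p-2})$ as a genuine subspace of $W^{1,1}_0(\Omega)$.

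Finally, for $X^{1,2}_0(\Omega;|\nabla u|^{p-2})\subset X^{1,2}(\Omega;|\nabla u|^{p-2})$, I would take $\varphi\in X^{1,2}_0(\Omega;|\nabla u|^{p-2})$ and approximate it by $\varphi_n\in C^\infty_0(\Omega)$ in the $X^{1,2}$ norm: then $\varphi_n\to\varphi$ in $L^2(\Omega)$, and $\{\nabla\varphi_n\}$, being Cauchy in the weighted space $L^2(\Omega;|\nabla u|^{p-2}\,dx)$, converges there to some $H$; by the Cauchy--Schwarz estimate of the first part this convergence also holds in $L^1(\Omega)$, while the inclusion into $W^{1,1}_0(\Omega)$ just proved forces $\nabla\varphi_n\to\nabla\varphi$ in $L^1(\Omega)$, so that $H=\nabla\varphi$ almost everywhere. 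Hence $\int_\Omega|\nabla u|^{p-2}|\nabla\varphi|^2\,dx<+\infty$, and since $\varphi\in L^2(\Omega)\cap W^{1,1}(\Omega)$ as well, we get $\varphi\in X^{1,2}(\Omega;|\nabla u|^{p-2})$. I expect the only genuinely delicate point to be precisely this bookkeeping with the completions — namely, ruling out ``ghost'' elements of $X^{1,2}_0(\Omega;|\nabla u|^{p-2})$, i.e.\ $X^{1,2}$-Cauchy sequences whose $L^2$-limit vanishes but whose gradients do not converge to zero in the weighted space; the embedding into $W^{1,1}_0(\Omega)$, through the closedness of the distributional gradient under $L^1$ convergence, is exactly what excludes them.
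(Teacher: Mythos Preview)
Your proof is correct and follows essentially the same approach as the paper's: reduce all three claims to the integrability of $|\nabla u|^{-(p-2)}$ (from Theorem~\ref{teo:sweden}) via Cauchy--Schwarz/H\"older, then compare the norms on $C^\infty_0(\Omega)$ and pass to completions. The only notable deviation is in the last step, where the paper uses a truncation-and-lower-semicontinuity argument on the sets $\{|\nabla\varphi|\le k\}$ (exploiting $\nabla u\in L^\infty$) rather than your identification of the weighted-$L^2$ limit with $\nabla\varphi$ via $L^1$ convergence; your route is arguably tidier and makes the exclusion of ``ghost'' elements more explicit.
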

\begin{proof}
In order to prove the first fact, it is sufficient to prove that every $\varphi\in X^{1,2}(\Omega;|\nabla u|^{p-2})$ belongs to $W^{1,1}(\Omega)$, as well. This can be done similarly as in the proof of Corollary \ref{coro:nakata}: for every $\Omega'\Subset \Omega$, we have by H\"older's inequality
\[
\int_{\Omega'} |\nabla \varphi|\,dx\le \left(\int_{\Omega} |\nabla u|^{p-2}\,|\nabla \varphi|^2\,dx\right)^\frac{1}{2}\,\left(\int_{\Omega} \frac{1}{|\nabla u|^{p-2}}\,dx \right)^\frac{1}{2},
\]
and observe that the last integrals are finite, thanks to the definition of $X^{1,2}(\Omega;|\nabla u|^{p-2})$ and to Theorem \ref{teo:sweden}. Since $\Omega'\Subset \Omega$ is arbitrary, this shows that $\varphi\in W^{1,1}(\Omega)$, as desired.\par
Let us now come to the second statement. It is sufficient to prove that there exist two constants $C_1,C_2>0$ such that
\[
C_1\,\|\varphi\|_{W^{1,1}(\Omega)}\le \|\varphi\|_{X^{1,2}(\Omega;|\nabla u|^{p-2})}\le C_2\,\|\varphi\|_{W^{1,p}(\Omega)},\qquad \mbox{ for every } \varphi\in C^\infty_0(\Omega).
\]
The estimate on the left-hand side follows from the first part of the proof and the fact that 
\[
\|\varphi\|_{L^1(\Omega)}\le |\Omega|^\frac{1}{2}\,\|\varphi\|_{L^2(\Omega)}\le |\Omega|^\frac{1}{2}\,\|\varphi\|_{X^{1,2}(\Omega;|\nabla u|^{p-2})}.
\]
The second one follows from H\"older's inequality, which permits to infer that 
\[
\begin{split}
\|\varphi\|_{X^{1,2}(\Omega;|\nabla u|^{p-2})}&=\|\varphi\|_{L^2(\Omega)}+ \left(\int_\Omega |\nabla u|^{p-2}\,|\nabla \varphi|^2\,dx\right)^\frac{1}{2}\\
&\le |\Omega|^{\frac{1}{2}-\frac{1}{p}}\,\|\varphi\|_{L^p(\Omega)}
+ \left(\int_\Omega |\nabla u|^p\,dx\right)^\frac{p-2}{2\,p}\,\|\nabla\varphi\|_{L^p(\Omega)}.
\end{split}
\]
Finally, as for the last statement: observe that every $\{\varphi_n\}_{n\in\mathbb{N}}\subset C^\infty_0(\Omega)$ which is a Cauchy sequence with respect to the norm of $X^{1,2}(\Omega;|\nabla u|^{p-2})$ is a Cauchy sequence in the Banach spaces $W^{1,1}_0(\Omega)$ and $L^2(\Omega)$, as well. Thus it converges in these spaces to a function 
\[
\varphi\in W^{1,1}_0(\Omega)\cap L^2(\Omega).
\]
Moreover, by using the strong $L^1$ convergence of the gradients and the fact that $\nabla u\in L^\infty(\Omega)$ by Theorem \ref{teo:uniformi}, we have for every $k\in\mathbb{N}$  
\[
\begin{split}
\int_{\{|\nabla \varphi|\le k\}}|\nabla u|^{p-2}\, |\nabla \varphi|^2\,dx&=\int_{\{|\nabla \varphi|\le k\}}|\nabla u|^{p-2}\, |\nabla \varphi|^2\,dx\\
&+2\,\lim_{n\to\infty} \int_{\{|\nabla \varphi|\le k\}} |\nabla u|^{p-2}\,\langle \nabla \varphi,\nabla \varphi_n-\nabla \varphi\rangle\,dx\\
&\le \liminf_{n\to\infty} \int_{\{|\nabla \varphi|\le k\}} |\nabla u|^{p-2}\,|\nabla \varphi_n|^2\,dx\\
&\le \liminf_{n\to\infty} \int_\Omega |\nabla u|^{p-2}\,|\nabla \varphi_n|^2\,dx\le C.
\end{split}
\]
By taking the limit as $k$ goes to $\infty$, this finally proves that $\varphi\in X^{1,2}(\Omega;|\nabla u|^{p-2})$. This is enough to conclude the proof.
\end{proof}
We can now characterize the extremals for the variational problem which defines $\lambda(\Omega;u)$. The same result is also contained in \cite[Proposition 4.4]{Ta}. We point out that the proof in \cite{Ta} is different and it uses a slightly stronger assumption on the open set.
\begin{prop}\label{prop:unique} 
Let $2<p<\infty$ and let $\Omega\subset\mathbb{R}^N$ be an open bounded connected set, with $C^{1,\alpha}$ boundary, for some $0<\alpha<1$.
With the notations of Proposition \ref{prop:primo}, the infimum $\lambda(\Omega;u)$ is uniquely attained on the space $X^{1,2}_0(\Omega;|\nabla u|^{p-2})$ by the functions $u$ or $-u$.
\end{prop}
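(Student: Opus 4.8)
The plan is to argue first that the infimum $\lambda(\Omega;u)$ is attained on $X^{1,2}_0(\Omega;|\nabla u|^{p-2})$, and then to show that any minimizer must be a multiple of $u$. For existence, I would take a minimizing sequence $\{\varphi_n\}\subset C^\infty_0(\Omega)$ with $\int_\Omega u^{p-2}\,\varphi_n^2\,dx=1$ and $\int_\Omega\langle D^2\mathcal{H}(\nabla u)\,\nabla\varphi_n,\nabla\varphi_n\rangle\,dx\to\lambda(\Omega;u)$. By the lower bound in \eqref{high} this forces $\int_\Omega |\nabla u|^{p-2}\,|\nabla\varphi_n|^2\,dx$ to be bounded; combined with the uniform weighted Sobolev inequality of Theorem \ref{teo:WSI} (applicable here since $u$ is the positive extremal of $\lambda_p(\Omega)=\lambda_{p,p}(\Omega)$, which is the $q=p$ instance of \eqref{eq:pq}), the sequence is bounded in $L^\sigma(\Omega)$ for some $\sigma>2$, hence in $L^2(\Omega)$, and by Corollary \ref{coro:nakata} it is bounded in $W^{1,\theta}_0(\Omega)$. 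Passing to a subsequence, $\varphi_n\rightharpoonup\varphi$ weakly in $W^{1,\theta}_0(\Omega)$ and strongly in $L^2(\Omega)$ (by the compactness argument underlying Corollary \ref{lem:compact}, i.e. interpolating between the strong $L^\theta$ convergence and the bounded $L^\sigma$ norms). The limit $\varphi$ lies in $X^{1,2}_0(\Omega;|\nabla u|^{p-2})$ by the same lower-semicontinuity truncation argument as in the proof of Lemma \ref{lm:embeddo} (test against $\nabla\varphi$ on $\{|\nabla\varphi|\le k\}$ and let $k\to\infty$), the constraint $\int_\Omega u^{p-2}\,\varphi^2\,dx=1$ passes to the limit by strong $L^2$ convergence together with $u^{p-2}\in L^\infty(\Omega)$, and the Dirichlet functional is weakly lower semicontinuous with respect to the weak $X^{1,2}_0$ convergence (it is a nonnegative quadratic form with the fixed weight $D^2\mathcal{H}(\nabla u)$). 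Hence $\varphi$ is a minimizer.

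Next I would record that the value is $(p-1)\lambda_p(\Omega)$ by Proposition \ref{prop:primo}, and that this value is attained by $u$ itself: indeed, plugging $\widetilde u:=u/\|u\|$ (with $\|u\|^2=\int_\Omega u^{p-2}\,u^2\,dx=\int_\Omega u^p\,dx=1$ by the normalization in \eqref{lambda1}) into the Rayleigh quotient and using $\langle D^2\mathcal{H}(z)\,z,z\rangle=(p-1)|z|^p$ gives exactly $(p-1)\lambda_p(\Omega)$. So $u$ and $-u$ are minimizers; it remains to prove uniqueness. Let $\varphi\in X^{1,2}_0(\Omega;|\nabla u|^{p-2})$ be any minimizer with $\int_\Omega u^{p-2}\,\varphi^2\,dx=1$. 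The key tool is the weighted Picone identity (Lemma \ref{lm:picone}) with $A=D^2\mathcal{H}(\nabla u)$, applied exactly as in the converse-inequality part of the proof of Proposition \ref{prop:primo}, but now to the general competitor $\varphi$ rather than to a smooth approximation: using the Euler--Lagrange equation \eqref{ELubis} for $u$ tested with $\varphi^2/u$, one obtains
\[
(p-1)\,\lambda_p(\Omega)=(p-1)\,\lambda_p(\Omega)\int_\Omega u^{p-2}\,\varphi^2\,dx=\int_\Omega\langle D^2\mathcal{H}(\nabla u)\,\nabla\varphi,\nabla\varphi\rangle\,dx-\int_\Omega\left\langle D^2\mathcal{H}(\nabla u)\,\left(\varphi\,\frac{\nabla u}{u}-\nabla\varphi\right),\left(\varphi\,\frac{\nabla u}{u}-\nabla\varphi\right)\right\rangle\,dx.
\]
Since $\varphi$ is a minimizer the first integral equals $(p-1)\lambda_p(\Omega)$, so the remainder integral vanishes. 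As $D^2\mathcal{H}(\nabla u)$ is positive definite wherever $\nabla u\neq 0$ (the lower bound in \eqref{high}), this forces $\varphi\,\nabla u/u=\nabla\varphi$ a.e.\ on $\{\nabla u\neq 0\}$, i.e.\ $\nabla(\varphi/u)=0$ there, and one then concludes $\varphi=c\,u$ for a constant $c$ with $|c|=1$ by the normalization. A couple of technical points must be handled with care: first, that $\varphi^2/u$ is an admissible test function in \eqref{ELubis} and that Picone's identity may be applied for $\varphi\in X^{1,2}_0$ rather than merely $C^\infty_0$ — this needs an approximation argument, using that $u\ge\mu_1>0$ on compact subsets (Theorem \ref{teo:uniformi}), that $W^{1,p}_0\subset X^{1,2}_0\subset W^{1,1}_0$ (Lemma \ref{lm:embeddo}), and a truncation/localization to pass from smooth functions to $\varphi$; second, concluding $\varphi/u=\mathrm{const}$ from $\nabla(\varphi/u)=0$ only on $\{\nabla u\neq 0\}$ requires knowing that $\{\nabla u=0\}$ does not disconnect $\Omega$ — here the fine estimate that $|\nabla u|^{-(p-1)+\epsilon}$ is integrable (Theorem \ref{teo:sweden}) shows $\{\nabla u=0\}$ has zero capacity/measure in the relevant sense, so the quotient $\varphi/u\in W^{1,1}_{\rm loc}$ has zero gradient a.e.\ on all of $\Omega$ and is therefore constant by connectedness.

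The main obstacle I anticipate is the rigorous justification of the Picone computation for a general $\varphi\in X^{1,2}_0(\Omega;|\nabla u|^{p-2})$: one must approximate $\varphi$ by $C^\infty_0$ functions in the $X^{1,2}_0$ norm, control the test function $\varphi^2/u$ (which requires the interior lower bound $u\ge\mu_1$ and a cutoff near $\partial\Omega$ where $u$ vanishes but $|\nabla u|\ge\mu_0$, so $u(x)\asymp\mathrm{dist}(x,\partial\Omega)$ and a Hardy-type inequality in the weighted space controls $\varphi/u$ near the boundary), and pass to the limit in all three integrals appearing in the identity. This boundary analysis — showing the boundary terms in the integration by parts vanish and that $\varphi^2/u\in X^{1,2}_0$ — is where the $C^{1,\alpha}$ regularity of $\partial\Omega$ and the uniform gradient lower bound near $\partial\Omega$ are genuinely used, and it is the part that requires the most care.
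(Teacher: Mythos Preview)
Your overall strategy---Picone identity with matrix $A=D^2\mathcal{H}(\nabla u)$, tested against the equation \eqref{ELubis} for $u$---matches the paper's, and your conclusion via $\nabla(\varphi/u)=0$ a.e.\ is the same. Two places where the paper's execution is cleaner than what you propose:

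\emph{Existence.} The direct-method argument you sketch (minimizing sequence, weighted Sobolev, compactness, lower semicontinuity) is correct but unnecessary. The paper first observes, via the same density/continuity argument as in your limit steps (Lemma~\ref{lm:many} for the Dirichlet form, $u\in L^\infty$ for the constraint), that the infimum over $C^\infty_0$ coincides with the infimum over $X^{1,2}_0$. Then existence is immediate: $u\in W^{1,p}_0\subset X^{1,2}_0$ is admissible and, by \eqref{high}, achieves exactly $(p-1)\lambda_p(\Omega)$. No compactness is needed.

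\emph{Uniqueness and the boundary issue.} You correctly flag that $\varphi^2/u$ need not be an admissible test function in \eqref{ELubis} for a general $\varphi\in X^{1,2}_0$, and you propose to handle this via a Hardy-type inequality near $\partial\Omega$. The paper avoids this entirely: take $v_n\in C^\infty_0(\Omega)$ converging to the minimizer $v$ in the $X^{1,2}$ norm; for each $n$ the function $v_n^2/u$ has compact support in $\Omega$ where $u$ is bounded below, so it is trivially admissible. Apply Picone to $v_n$, obtaining the identity \eqref{porconi} with equality. Now pass to the limit: the left side and the first term on the right converge (by $L^2$ convergence and \eqref{hessest} respectively), while for the nonnegative remainder term one uses only Fatou's lemma to get a one-sided inequality---which is all that is needed to force the remainder to vanish. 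This sidesteps any boundary analysis, any Hardy inequality, and any need to show $\varphi^2/u\in X^{1,2}_0$.
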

\begin{proof}
We first notice that if $v\in X^{1,2}_0(\Omega;|\nabla u|^{p-2})$ and $\{v_n\}_{n\in\mathbb{N}}\subset C^\infty_0(\Omega)$ is such that 
\[
\lim_{n\to\infty} \|v_n-v\|_{X^{1,2}(\Omega;|\nabla u|^{p-2})}=0,
\]
then 
\begin{equation}
\label{hessest}
\lim_{n\to\infty} \int_\Omega \langle D^2\mathcal{H}(\nabla u)\,\nabla v_n,\nabla v_n\rangle\,dx=\int_\Omega \langle D^2\mathcal{H}(\nabla u)\,\nabla v,\nabla v\rangle\,dx.
\end{equation}
Indeed, by Lemma \ref{lm:many} 
\[
\begin{split}
\left|\langle D^2\mathcal{H}(\nabla u)\,\nabla v_n,\nabla v_n\rangle-\langle D^2\mathcal{H}(\nabla u)\,\nabla v,\nabla v\rangle\right|\le (p-1)\,|\nabla u|^{p-2}\,|\nabla v_n-\nabla v|\,|\,\Big(|\nabla v_n|+|\nabla v|\Big).
\end{split}
\]
By integrating over $\Omega$ and using H\"older's inequality, we have
\[
\begin{split}
\Bigg|\int_\Omega \langle D^2\mathcal{H}(\nabla u)\,\nabla v_n,\nabla v_n\rangle\,dx&-\int_\Omega \langle D^2\mathcal{H}(\nabla u)\,\nabla v,\nabla v\rangle\,dx\Bigg|\\
&\le C\,\int_\Omega |\nabla u|^{p-2}\,|\nabla v_n-\nabla v|\,\Big(|\nabla v_n|+|\nabla v|\Big)\,dx\\
&\le C\,\left(\int_\Omega |\nabla u|^{p-2}\,|\nabla v_n-\nabla v|^2\,dx\right)^\frac{1}{2}\\
&\times\left(\int_\Omega |\nabla u|^{p-2}\,\Big(|\nabla v_n|+|\nabla v|\Big)^2\,dx\right)^\frac{1}{2}.
\end{split}
\]
By observing that the last term converges to $0$, we get \eqref{hessest}. Similarly, by using that $u\in L^\infty(\Omega)$, we get that 
\[
\lim_{n\to\infty} \int_\Omega u^{p-2}\,|v_n|^2\,dx=\int_\Omega u^{p-2}\,|v|^2\,dx.
\]
Since $C^\infty_0(\Omega)$ is dense in $X^{1,2}_0(\Omega;|\nabla u|^{p-2})$ by definition, the previous computations show that 
\[
\lambda(\Omega;u)=\inf_{\varphi\in X^{1,2}_0(\Omega;|\nabla u|^{p-2})} \left\{\int_\Omega \langle D^2\mathcal{H}(\nabla u)\,\nabla \varphi,\nabla \varphi\rangle\,dx\, :\, \int_\Omega u^{p-2}\,|\varphi|^2\,dx=1\right\}.
\]
In order to prove that $u$ or $-u$ attain the infimum, it is sufficient to use \eqref{high}.
This entails that
\[
\int_\Omega \langle D^2\mathcal{H}(\nabla u)\,\nabla u,\nabla u\rangle\,dx=(p-1)\,\int_\Omega |\nabla u|^p\,dx=(p-1)\,\lambda_p(\Omega)=\lambda(\Omega;u),
\]
where the last equality is the content of Proposition \ref{prop:primo}.
By further observing that $u\in W^{1,p}_0(\Omega)\subset X^{1,2}_0(\Omega;|\nabla u|^{p-2})$ by Lemma \ref{lm:embeddo}, we get that $u$ and $-u$ are minimizers for the problem defining the value $\lambda(\Omega;u)$.
\vskip.2cm\noindent
In order to prove that any minimizer must coincide either with $u$ or with $-u$, we assume that there is another minimizer $v\in X^{1,2}_0(\Omega;|\nabla u|^{p-2})$. By definition, there exists a sequence $\{v_n\}_{n\in\mathbb{N}}\subset C^\infty_0(\Omega)$ such that
\[
\lim_{n\to\infty} \left[\int_\Omega |\nabla u|^{p-2}\,|\nabla v_n-\nabla v|^2\,dx+\int_\Omega |v_n-v|^2\,dx\right]=0.
\]
We recall that $u$ satisfies \eqref{ELubis}.
For every $n\in\mathbb{N}$, the choice $\varphi =v_n^2/u$ is feasible in \eqref{ELubis} and it yields
\begin{equation}
\label{porconi}
\begin{split}
\lambda(\Omega; u)\,\int_\Omega u^{p-2}\, v_n^2\, dx	
&= \int_\Omega \left\langle D^2\mathcal{H}(\nabla u)\,\nabla u,\nabla\left(\frac{v_n^2}{u}\right)\right\rangle\,dx\\
&= \int_\Omega \langle D^2\mathcal{H}(\nabla u)\,\nabla v_n,\nabla v_n\rangle\,dx\\
&-\int_\Omega \left\langle D^2\mathcal{H}(\nabla u)\,\left(v_n\,\frac{\nabla u}{u}-\nabla v_n\right),\left(v_n\,\frac{\nabla u}{u}-\nabla v_n\right)\right\rangle\,dx,\\
\end{split}
\end{equation}
where in the second equality we used the general version of the Picone identity given by Lemma \ref{lm:picone}, with the positive semidefinite matrix $A=D^2\mathcal{H}(\nabla u)$. 
\par
We now wish to pass to the limit as $n$ goes to $\infty$ in the previous identity. We notice at first that 
\[
\lim_{n\to\infty}\int_\Omega u^{p-2}\, v_n^2\, dx=\int_\Omega u^{p-2}\, v^2\, dx=1,
\]
which follows directly by the choice of $\{v_n\}_{n\in\mathbb{N}}$ and the fact that $u\in L^\infty(\Omega)$. As for the first term on the right-hand side of \eqref{porconi}, we simply use \eqref{hessest}.
\par
We are left with handling the last term in \eqref{porconi}. We have that $\{(v_n,\nabla v_n)\}_{n\in\mathbb{N}}$ converges almost everywhere to $(v,\nabla v)$, possibly up to extracting a subsequence. Observe that we are using that $|\nabla u|\not=0$ almost everywhere in $\Omega$, thanks to \eqref{DS}.
By observing that $D^2 \mathcal{H}(\nabla u)$ is positive semidefinite, an application of Fatou's Lemma yields
\[
\begin{split}
\liminf_{n\to \infty} &\int_\Omega \left\langle D^2\mathcal{H}(\nabla u)\,\left(v_n\,\frac{\nabla u}{u}-\nabla v_n\right),\left(v\,\frac{\nabla u}{u}-\nabla v_n\right)\right\rangle\,dx\\
&\ge \int_\Omega \left\langle D^2\mathcal{H}(\nabla u)\,\left(v\,\frac{\nabla u}{u}-\nabla v\right),\left(v\,\frac{\nabla u}{u}-\nabla v\right)\right\rangle\,dx.
\end{split}
\]
Thus, by taking the limit as $n$ goes to $\infty$ in \eqref{porconi}, we get
\[
\begin{split}
\lambda(\Omega; u)&+\int_\Omega \left\langle D^2\mathcal{H}(\nabla u)\,\left(v\,\frac{\nabla u}{u}-\nabla v\right),\left(v\,\frac{\nabla u}{u}-\nabla v\right)\right\rangle\,dx\le \lambda(\Omega; u).
\end{split}
\] 
This entails that we must have
\begin{equation}
\label{condizione}
\left\langle D^2\mathcal{H}(\nabla u)\,\left(v\,\frac{\nabla u}{u}-\nabla v\right),\left(v\,\frac{\nabla u}{u}-\nabla v\right)\right\rangle=0,\qquad \mbox{a.\,e. in }\Omega.
\end{equation}
From the definition of $D^2 \mathcal{H}$, 
it is clear that $D^2\mathcal{H}(\nabla u)$ is positive definite whenever $\nabla u$ does not vanish: as remarked above, this is true almost everywhere by \eqref{DS}. Therefore, from \eqref{condizione} we must have
\begin{equation}
\label{watchout}
v\frac{\nabla u}{u}-\nabla v = 0,\qquad \mbox{ a.\,e. in }\Omega.
\end{equation}
We now observe that $v\in W^{1,1}_0(\Omega)$ thanks to Lemma \ref{lm:embeddo}, while by Theorem \ref{teo:uniformi} $u\in C^1(\overline\Omega)$ and it has the following property: for every $\Omega'\Subset \Omega$, there exists a constant $C=C(\Omega')>0$ such that $u\ge 1/C$ on $\Omega'$. Thus we have 
\[
\frac{v}{u}\in W^{1,1}_{\rm loc}(\Omega),
\]
and Leibniz's rule holds for its distributional gradient. The latter is given by 
\[
\nabla \left(\frac{v}{u}\right)=\frac{u\,\nabla v-v\,\nabla u}{u^2},\qquad \mbox{ a.\,e. in }\Omega,
\]
and thus it identically vanishes almost everywhere in $\Omega$, by virtue of \eqref{watchout}. 
Since $\Omega$ is connected, this implies that $v/u$ is constant in $\Omega$. Thus we get that $v$ is proportional to $u$ in $\Omega$. The desired result is now a consequence of the normalization taken.
\end{proof}

\section{Proofs of the main results} 
\label{sec:proof}

\begin{proof}[Proof of Theorem \ref{thm:main}]
We divide the proof in three parts, for ease of readability.
\vskip.2cm\noindent
{\it Part 1: linearized equation}.
We argue by contradiction: we suppose that for every $q>p$, the problem \eqref{eq:pq} always admits (at least) two distinct positive solutions. We then take a decreasing sequence $\{q_n\}_{n\in\mathbb{N}}\subset (p,+\infty)$ such that 
\[
\lim_{n\to\infty} q_n=p.
\]
Correspondingly, for every $n\in\mathbb{N}$ there exist two distinct positive solutions $u_n$ and $v_n$ of \eqref{eq:pq}. Observe that they solve
\[
-\Delta_p u_n=\lambda_{p,q_n}(\Omega)\,u_n^{q_n-1}\quad \mbox{ and }\quad -\Delta_p v_n=\lambda_{p,q_n}(\Omega)\,v_n^{q_n-1},\qquad \mbox{ in }\Omega,
\]
with 
\[
\int_\Omega u_n^{q_n}\,dx=\int_\Omega v_n^{q_n}\,dx=1.
\]
By recalling that $q\mapsto \lambda_{p,q}(\Omega)$ is continuous, we get
\[
\lim_{n\to \infty} \lambda_{p,q_n}(\Omega)=\lambda_p(\Omega).
\]
Then, by using the minimality and the uniform convexity of the $L^p$ norm, it is not difficult to see that
\[
\lim_{n\to\infty} \|u_n-u\|_{W^{1,p}_0(\Omega)}=\lim_{n\to\infty} \|v_n-u\|_{W^{1,p}_0(\Omega)}=0,
\]
where $u\in W^{1,p}_0(\Omega)$ is the unique positive solution of \eqref{lambda1}. In turn, such a convergence can be upgraded to a convergence in $C^1(\overline\Omega)$ norm, thanks to the uniform $C^{1,\chi}$ estimate of Theorem \ref{teo:uniformi}.
\par
If we recall the notation
\[
\mathcal{H}(z)=\frac{1}{p}\,|z|^p,
\] 
the equations solved by $u_n$ and $v_n$ can be written in weak form as
\begin{equation}
\label{u_n}
\int_\Omega \langle \nabla \mathcal{H}(\nabla u_n),\nabla\varphi\rangle=\lambda_{p,q_n}(\Omega)\,\int_\Omega u_n^{q_n-1}\,\varphi\,dx,
\end{equation}
and
\begin{equation}
\label{v_n}
\int_\Omega \langle \nabla \mathcal{H}(\nabla v_n),\nabla\varphi\rangle=\lambda_{p,q_n}(\Omega)\,\int_\Omega v_n^{q_n-1}\,\varphi\,dx,
\end{equation}
for any $\varphi\in W^{1,p}_0(\Omega)$.

We now observe that for every $z,w\in\mathbb{R}^N$ we have
\begin{equation}
\label{gradienti}
\begin{split}
\nabla \mathcal{H}(z)-\nabla \mathcal{H}(w)&=\int_0^1 \frac{d}{dt}\Big( \nabla\mathcal{H}(t\,z+(1-t)\,w\Big)\,dt\\
&=\left(\int_0^1 D^2\mathcal{H}(t\,z+(1-t)\,w)\,dt\right)\,(z-w).
\end{split}
\end{equation}
Similarly, for every $a,b\ge 0$ we have
\begin{equation}
\label{funzioni}
\begin{split}
a^{q_n-1}-b^{q_n-1}&=\int_0^1 \frac{d}{dt} (t\,a+(1-t)\,b)^{q_n-1}\,dt\\
&=(q_n-1)\,\left(\int_0^1 (t\,a+(1-t)\,b)^{q_n-2}\,dt\right)\,(a-b).
\end{split}
\end{equation}
By subtracting the two equations \eqref{u_n} and \eqref{v_n}, using \eqref{gradienti} with $z=\nabla u_n(x)$, $w=\nabla v_n(x)$ and \eqref{funzioni} with $a=u_n(x)$, $b=v_n(x)$,
we thus get
\begin{equation}
\label{almostlinear}
\begin{split}
\int_\Omega \langle A_{n}(x)\,\nabla (u_n-v_n),\nabla \varphi\rangle\,dx&=\lambda_{p,q_n}(\Omega)\,\int_\Omega w_n\,(u_n-v_n)\,\varphi\,dx\\
\end{split}
\end{equation}
where 
\[
A_n(x)=\int_0^1 D^2\mathcal{H}(t\,\nabla u_n(x)+(1-t)\,\nabla v_n(x))\,dt,
\]
and 
\[
w_n(x)=(q_n-1)\,\int_0^1 (t\,u_n(x)+(1-t)\,v_n(x))^{q_n-2}\,dt.
\]
For every $n\in\mathbb{N}$, we set
\[
\phi_n=\frac{u_n-v_n}{\|u_n-v_n\|_{L^2(\Omega)}}\in W^{1,p}_0(\Omega),
\]
then from \eqref{almostlinear} we get that $\phi_n$ solves the following weighted linear eigenvalue problem
\begin{equation}
\label{linearized}
\int_\Omega \langle A_n(x)\,\nabla\phi_n,\nabla \varphi\rangle\,dx=\lambda_{p,q_n}(\Omega)\,\int_\Omega w_n\,\phi_n\,\varphi\,dx,\qquad \mbox{ for } \varphi\in W^{1,p}_0(\Omega). 
\end{equation}
In particular, the choice $\varphi=\phi_n$ in \eqref{linearized} yields
\begin{equation}
\label{neqn}
\int_\Omega \langle A_n\,\nabla \phi_n,\nabla\phi_n\rangle\,dx=\lambda_{p,q_n}(\Omega)\,\int_\Omega w_n\,|\phi_n|^2\,dx.
\end{equation}
\noindent
{\it Part 2: convergence of $\phi_n$}. We now would like to know that it is possible to pass to the limit in \eqref{linearized} and \eqref{neqn}.
By observing that 
\begin{equation}
\label{eq:wbound}
\|\phi_n\|_{L^2(\Omega)}=1\qquad \mbox{ and }\qquad \lambda_{p,q_n}(\Omega)\,\|w_n\|_{L^\infty(\Omega)}\le C,
\end{equation}
inequality \eqref{neqn} implies
\[
\int_\Omega \langle A_n\,\nabla \phi_n,\nabla\phi_n\rangle\,dx\le C.
\]
Observe that the second uniform bound in \eqref{eq:wbound} can be inferred from Proposition \ref{prop:Linfty}, the properties of $u_n,v_n$ and \eqref{lowerlambda}.
An application of \eqref{high} and Lemma \ref{lm:integral} yields
\begin{equation}
\label{eq:ralle}
\begin{split}
\langle A_n\,\xi,\xi\rangle&=\int_0^1 \langle D^2\mathcal{H}(t\,\nabla u_n+(1-t)\,\nabla v_n)\,\xi,\xi\rangle\,dt\\
&\ge \left(\int_0^1|t\,\nabla u_n+(1-t)\,\nabla v_n|^{p-2}\,dt\right)\,|\xi|^2\ge \frac{1}{4^{p-1}}\, (|\nabla u_n|+|\nabla v_n|)^{p-2}\,|\xi|^2.
\end{split}
\end{equation}
Therefore, we obtain
\[
\int_\Omega (|\nabla u_n|+|\nabla v_n|)^{p-2}\,|\nabla \phi_n|^2\,dx\le C,\qquad \mbox{ for every } n\in\mathbb{N}.
\]
By Corollary \ref{lem:compact}, there exist $\theta=\theta(p)\in (1,2)$ and $\phi\in L^2(\Omega)\cap W^{1,\theta}_0(\Omega)$  such that $\{\phi_n\}_{n\in\mathbb{N}}$ converges strongly in $L^2(\Omega)$ and weakly in $W^{1,\theta}(\Omega)$ to $\phi$, up to a subsequence. 
In addition, from the $C^1(\overline\Omega)$ convergence of $u_n$ and $v_n$, we have
\[
A_n\to  D^2\mathcal{H}(\nabla u)\qquad \mbox{ and }\qquad \lambda_{p,q_n}(\Omega)\,w_n\to (p-1)\,\lambda_p(\Omega)\, u^{p-2}\qquad \mbox{ uniformly on } \overline\Omega.
\] 
This is enough to pass to the limit in \eqref{linearized} and \eqref{neqn}, as we will now see.
Indeed, the convergence of the right-hand side of \eqref{linearized} easily follows from the claimed convergences. As for the left-hand side, we have for every $\varphi\in C^\infty_0(\Omega)$
\[
\begin{split}
\Bigg|\int_\Omega \langle A_n(x)\,\nabla\phi_n,\nabla \varphi\rangle\,dx&-\int_\Omega \langle D^2\mathcal{H}(\nabla u)\,\nabla\phi,\nabla \varphi\rangle\,dx\Bigg|\\
&\le \Bigg|\int_\Omega \left\langle \Big(A_n(x)-D^2\mathcal{H}(\nabla u)\Big)\,\nabla\phi_n,\nabla \varphi\right\rangle\,dx\Bigg|\\
&+\Bigg|\int_\Omega \langle D^2\mathcal{H}(\nabla u)\,(\nabla\phi_n-\nabla \phi),\nabla \varphi\rangle\,dx\Bigg|\\
&\le \|A_n-D^2\mathcal{H}(\nabla u)\|_{L^\infty(\Omega)}\,\|\nabla \varphi\|_{L^\infty(\Omega)}\,\int_\Omega |\nabla \phi_n|\,dx \\
&+\Bigg|\int_\Omega \langle D^2\mathcal{H}(\nabla u)\,(\nabla\phi_n-\nabla \phi),\nabla \varphi\rangle\,dx\Bigg|.
\end{split}
\]
The first term converges to zero thanks to the uniform convergence of $A_n$ and the uniform bound on $\{\phi_n\}_{n\in\mathbb{N}}$ in $W^{1,\theta}(\Omega)$. As for the second term, it is sufficient to use that $D^2\mathcal{H}(\nabla u)\in L^\infty(\Omega)$ and the weak convergence of the gradients of $\{\phi_n\}_{n\in\mathbb{N}}$.

We thus obtain that $\phi$ satisfies
\[
\int_\Omega \langle D^2\mathcal{H}(\nabla u)\,\nabla \phi,\nabla \varphi\rangle\,dx=(p-1)\,\int_\Omega u^{p-2}\,\phi\,\varphi\,dx,\qquad \mbox{ for every }\varphi\in C^\infty_0(\Omega).
\]
In order to pass to the limit in \eqref{neqn}, we observe that\footnote{In the second inequality, we use the ``above tangent'' property 
\[
f(z)\ge f(z_0)+\langle \nabla f(z_0),z-z_0\rangle,\qquad \mbox{ for every } z,z_0\in\mathbb{R}^N,
\]
for the convex function $f(z)=\langle A_n\,z,z\rangle$.} for every $n,k\in\mathbb{N}$
\[
\begin{split}
\int_\Omega \langle A_n\,\nabla \phi_n,\nabla\phi_n\rangle\,dx&\ge \int_{\{|\nabla \phi|\le k\}} \langle A_n\,\nabla \phi_n,\nabla\phi_n\rangle\,dx\\
&\ge  \int_{\{|\nabla \phi|\le k\}} \langle A_n\,\nabla \phi,\nabla\phi\rangle\,dx+2\, \int_{\{|\nabla \phi|\le k\}} \langle A_n\,\nabla \phi,\nabla\phi_n-\nabla \phi\rangle\,dx.
\end{split}
\]
By using the weak convergence of $\nabla \phi_n$ and the fact $A_n\,\nabla \phi$ is uniformly bounded on $\{|\nabla \phi|\le k\}$ for every fixed $k$, we get
\[
\lim_{n\to\infty}  \int_{\{|\nabla \phi|\le k\}} \langle A_n\,\nabla \phi,\nabla\phi_n-\nabla \phi\rangle\,dx=0.
\]
This implies that for every $k\in\mathbb{N}$ we have
\[
\begin{split}
\liminf_{n\to\infty}
\int_\Omega \langle A_n\,\nabla \phi_n,\nabla\phi_n\rangle\,dx&\ge \liminf_{n\to\infty}\int_{\{|\nabla \phi|\le k\}} \langle A_n\,\nabla \phi,\nabla\phi\rangle\,dx\\
&=\int_{\{|\nabla \phi|\le k\}} \langle D^2\mathcal{H}(\nabla u)\,\nabla \phi,\nabla\phi\rangle\,dx,
\end{split}
\]
thanks to the uniform convergence of $A_n$. We can now take the limit as $k$ goes to $\infty$ and obtain that the left-hand side of \eqref{neqn} is lower semicontinuous. Thus we obtain 
\begin{equation}
\label{eppo}
\int_\Omega \langle D^2\mathcal{H}(\nabla u)\,\nabla \phi,\nabla\phi\rangle\,dx\leq (p-1)\,\lambda_{p}(\Omega)\,\int_\Omega u^{p-2}\,|\phi|^2\,dx.
\end{equation}
Observe that in the right-hand side we used the strong convergence in $L^2(\Omega)$ of $\{\phi_n\}_{n\in\mathbb{N}}$. By recalling that 
\[
\langle D^2\mathcal{H}(z)\,\xi,\xi\rangle\ge |z|^{p-2}\,|\xi|^2,\qquad \mbox{ for every } z,\xi\in \mathbb{R}^N,
\]
the estimate \eqref{eppo} shows that $\phi$ also belongs to the weighted Sobolev space $X^{1,2}(\Omega;|\nabla u|^{p-2})$ (recall the Definition \ref{defi:spaziodimerda} above).  Note also that the strong convergence of $\phi_n$ in $L^2$ together with \eqref{eq:wbound} implies that $\|\phi\|_{L^2(\Omega)}=1$, so that $\phi$ is non-trivial.
\par
Finally, from the properties above we have 
\[
\phi\in X^{1,2}(\Omega;|\nabla u|^{p-2})\cap W^{1,1}_0(\Omega)= X^{1,2}_0(\Omega;|\nabla u|^{p-2}),
\]
thanks to Lemma \ref{lm:bonucci}.
\vskip.2cm\noindent
{\it Part 3: conclusion.} From the fact that $\phi\in X^{1,2}_0(\Omega;|\nabla u|^{p-2})$ is nontrivial together with
Proposition \ref{prop:primo}, Proposition \ref{prop:unique} and \eqref{eppo}, it follows that $\phi$ must be proportional either to $u$ or to $-u$. In particular, $\phi$ does not change sign: more precisely, it is either strictly negative or strictly positive.
\par
On the other hand, by Lemma \ref{lm:nonempty}, we know that $u_n-v_n$ must change sign. Accordingly, if $\phi_n^\pm$ stand for the positive and negative part of $\phi_n$ respectively, we have that each
\[
\Omega_n^\pm = \{x\in \Omega_n: \, \phi^\pm_n(x)>0\},
\]
has positive measure. Testing equation \eqref{linearized} with  $\phi_n^\pm$, we obtain by using \eqref{eq:wbound}
\begin{equation}\label{eq:ralle2}
\int_\Omega \langle A_n\,\nabla \phi_n^\pm,\nabla\phi_n^\pm\rangle\,dx=\lambda_{p,q_n}(\Omega)\,\int_\Omega w_n\,|\phi_n^\pm|^2\,dx\le C\,\int_\Omega |\phi_n^\pm|^2\,dx.
\end{equation}
By H\"older's inequality, Theorem \ref{teo:WSI}, equations \eqref{eq:ralle} and \eqref{eq:ralle2} we have for an exponent $2<\sigma<\sigma_0$
\[
\begin{split}
\int_\Omega |\phi_n^\pm|^2\,dx &\leq \left(\int_\Omega |\phi_n^\pm|^\sigma dx\right)^\frac{2}{\sigma}|\Omega_n^\pm|^\frac{\sigma-2}{\sigma}\\
&\leq \frac{1}{\mathcal{T}}\,  |\Omega_n^\pm|^\frac{\sigma-2}{\sigma}\int_\Omega|\nabla u_n|^{p-2}\,|\nabla \phi_n^\pm|^2\,dx\\
&\leq \frac{4^{p-1}}{\mathcal{T}}\,|\Omega_n^\pm|^\frac{\sigma-2}{\sigma}\int_\Omega \langle A_n\,\nabla \phi_n^\pm,\nabla\phi_n^\pm\rangle\,dx \le C\,\frac{4^{p-1}}{\mathcal{T}}\,|\Omega_n^\pm|^\frac{\sigma-2}{\sigma}\int_\Omega |\phi_n^\pm|^2\,dx.
\end{split}
\]
This implies
\[
|\Omega_n^\pm|\geq \frac{1}{\widetilde C},\qquad \mbox{ for every } n\in\mathbb{N},
\]
for some constant $\widetilde C$, not depending on $n$.
This contradicts the fact, shown in {\it Part 2}, that $\phi_n$ strongly converges in $L^2(\Omega)$ to the function $\phi$, the latter being either strictly positive or strictly negative. Such a contradiction can be obtained by reasoning as in the proof of \cite[Theorem 1]{Er2014}, for example. The proof is over.
\end{proof}
\begin{oss}
We have already noticed that the function $q\mapsto\lambda_{p,q}(\Omega)$ is continuous. Actually, such a function is $C^1$ on each interval for which $\lambda_{p,q}(\Omega)$ is simple, see \cite{Er2}. In particular, under the standing assumptions of Theorem \ref{thm:main}, we get that such a function is $C^1$ on $[1,\overline{q})$, where $\overline{q}>p$ is as in the statement. We owe this observation to the kind courtesy of G. Ercole.
\end{oss}

\begin{proof}[Proof of Corollary \ref{coro:positive}]
Let $v\in W^{1,p}_0(\Omega)\setminus\{0\}$ be a critical point of the functional $\mathfrak{F}_{q,\lambda}$. By definition of $\lambda_{p,q}(\Omega)$, we have
\[
\lambda_{p,q}(\Omega)\le \frac{\displaystyle\int_\Omega |\nabla v|^p\,dx}{\displaystyle\left(\int_\Omega |v|^q\,dx\right)^\frac{p}{q}}=\lambda\,\left(\int_\Omega |v|^q\,dx\right)^\frac{q-p}{q},
\]
that is 
\begin{equation}
\label{lowerbound}
\left(\frac{\lambda_{p,q}(\Omega)}{\lambda}\right)^\frac{q}{q-p}\le\int_\Omega |v|^q\,dx.
\end{equation}
Moreover, we obtain that equality holds in \eqref{lowerbound} if and only if $v/\|v\|_{L^q(\Omega)}$ is an extremal for \eqref{eq:pq}. On the other hand, if $u\in W^{1,p}_0(\Omega)$ is the unique positive minimizer of \eqref{eq:pq}, it is easily seen that 
\begin{equation}
\label{U}
U=\left(\frac{\lambda}{\lambda_{p,q}(\Omega)}\right)^\frac{1}{p-q}\,u,
\end{equation}
is a critical point of $\mathfrak{F}_{q,\lambda}$ and equality in \eqref{lowerbound} holds. This finally shows that 
\[
\left(\frac{\lambda_{p,q}(\Omega)}{\lambda}\right)^\frac{q}{q-p}=\inf\left\{\int_\Omega |v|^q\,dx\, :\, v\in W^{1,p}_0(\Omega)\setminus\{0\} \mbox{ is a critical point of } \mathfrak{F}_{q,\lambda}\right\},
\]
and such an infimum is uniquely attained at the critical point \eqref{U}.
\end{proof}

\appendix

\section{Inequalities}
\label{sec:A}

In Section \ref{sec:3}, we used the following generalization of Picone's identity for computing the first eigenvalue of the linearized operator.
\begin{lm}[Picone--type identity]
\label{lm:picone}
Let $u,v:\Omega\to \mathbb{R}$ be two differentiable functions, such that $u>0$ in $\Omega$. Let $A$ be an $N\times N$ symmetric matrix with real coefficients. Then we have
\begin{equation}
\label{picone}
\left\langle A\,\nabla u,\nabla\left(\frac{v^2}{u}\right)\right\rangle=\langle A\,\nabla v,\nabla v\rangle-\left\langle A\,\left(v\,\frac{\nabla u}{u}-\nabla v\right),\left(v\,\frac{\nabla u}{u}-\nabla v\right)\right\rangle.
\end{equation}
In particular, if $A$ is positive semidefinite, we get
\[
\left\langle A\,\nabla u,\nabla\left(\frac{v^2}{u}\right)\right\rangle\le \langle A\,\nabla v,\nabla v\rangle
\]
Finally, if $A$ is positive definite, equality in the previous estimate holds if and only if
\[
v\,\frac{\nabla u}{u}=\nabla v.
\]
\end{lm}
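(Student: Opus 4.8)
The plan is to verify the identity \eqref{picone} by a direct pointwise computation and then to read off the inequality and the equality case from the sign properties of the quadratic form $\xi\mapsto\langle A\,\xi,\xi\rangle$.

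First I would compute the gradient of $v^2/u$. Since $u,v$ are differentiable and $u>0$, the function $v^2/u$ is differentiable and the quotient and chain rules give
\[
\nabla\left(\frac{v^2}{u}\right)=\frac{2\,v}{u}\,\nabla v-\frac{v^2}{u^2}\,\nabla u.
\]
Inserting this into the left-hand side of \eqref{picone} and using bilinearity of $(\xi,\eta)\mapsto\langle A\,\xi,\eta\rangle$ yields
\[
\left\langle A\,\nabla u,\nabla\left(\frac{v^2}{u}\right)\right\rangle=\frac{2\,v}{u}\,\langle A\,\nabla u,\nabla v\rangle-\frac{v^2}{u^2}\,\langle A\,\nabla u,\nabla u\rangle.
\]

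Next I would expand the right-hand side of \eqref{picone}. Setting $w:=v\,\nabla u/u-\nabla v$ and using the symmetry of $A$ (so that $\langle A\,\nabla u,\nabla v\rangle=\langle A\,\nabla v,\nabla u\rangle$), bilinearity gives
\[
\langle A\,w,w\rangle=\frac{v^2}{u^2}\,\langle A\,\nabla u,\nabla u\rangle-\frac{2\,v}{u}\,\langle A\,\nabla u,\nabla v\rangle+\langle A\,\nabla v,\nabla v\rangle,
\]
so that $\langle A\,\nabla v,\nabla v\rangle-\langle A\,w,w\rangle$ coincides exactly with the expression found in the previous step. This establishes \eqref{picone}. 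For the two remaining assertions: if $A$ is positive semidefinite then $\langle A\,w,w\rangle\ge 0$, and subtracting a nonnegative quantity in \eqref{picone} gives the stated inequality; if in addition $A$ is positive definite, then $\langle A\,w,w\rangle=0$ forces $w=0$, i.e. $v\,\nabla u/u=\nabla v$, and conversely this relation makes the subtracted term vanish, so it characterizes the equality case.

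I do not expect any genuine obstacle here: the statement is an algebraic identity, and the only points deserving a word of care are the differentiability of $v^2/u$ (ensured by $u>0$ together with the differentiability of $u$ and $v$) and the use of the symmetry of $A$ in the expansion of $\langle A\,w,w\rangle$, without which the cross terms would not combine as needed. Everything else is routine expansion of a quadratic form.
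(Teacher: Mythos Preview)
Your proposal is correct and follows exactly the approach indicated in the paper, which simply states that \eqref{picone} is obtained by direct computation (left to the reader) and that the remaining assertions follow from the sign properties of $A$. You have carried out that computation carefully, correctly using the symmetry of $A$ to combine the cross terms.
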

\begin{proof}
The proof of \eqref{picone} is by direct computation and is left to the reader. The other facts easily follow from \eqref{picone} and the additional properties of $A$.
\end{proof}
\begin{lm}
\label{lm:retarded}
Let $A$ be an $N\times N$ symmetric matrix with real coefficients. For every $z,w\in\mathbb{R}^N$ we have 
\[
\Big|\langle A\,z,z\rangle-\langle A\,w,w\rangle\Big|\le |A\,(z-w)|\,\Big(|z|+|w|\Big).
\]
\end{lm}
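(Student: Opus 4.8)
The plan is to reduce the left-hand side to a single bilinear form evaluated on the increment $z-w$ and the sum $z+w$, and then apply the Cauchy--Schwarz inequality together with the triangle inequality $|z+w|\le |z|+|w|$.

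First I would expand the difference by writing $z = w + h$ with $h := z-w$. Using bilinearity,
\[
\langle A\,z,z\rangle - \langle A\,w,w\rangle = 2\,\langle A\,w,h\rangle + \langle A\,h,h\rangle,
\]
and since $A$ is symmetric, $\langle A\,w,h\rangle = \langle A\,h,w\rangle$. Hence the right-hand side equals $\langle A\,h,2w+h\rangle = \langle A\,h,w+z\rangle$, i.e.
\[
\langle A\,z,z\rangle - \langle A\,w,w\rangle = \langle A\,(z-w),\,z+w\rangle.
\]

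Taking absolute values and applying the Cauchy--Schwarz inequality in $\mathbb{R}^N$ gives
\[
\Big|\langle A\,z,z\rangle - \langle A\,w,w\rangle\Big| = \big|\langle A\,(z-w),\,z+w\rangle\big| \le |A\,(z-w)|\,|z+w| \le |A\,(z-w)|\,\big(|z|+|w|\big),
\]
which is precisely the claimed estimate.

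There is essentially no obstacle here: the only point requiring care is the use of the symmetry of $A$ in the step $\langle A\,w,h\rangle = \langle A\,h,w\rangle$, which is exactly where the hypothesis enters; everything else is elementary linear algebra and the two standard vector inequalities. The computation is short enough that it can simply be carried out in the proof rather than deferred to the reader.
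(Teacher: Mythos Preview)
Your proof is correct. The paper takes a slightly different route: it parametrizes the segment $z_t=(1-t)w+tz$, writes the difference as $\int_0^1 \frac{d}{dt}\langle A z_t,z_t\rangle\,dt = 2\int_0^1 \langle A(z-w),z_t\rangle\,dt$, and then bounds $|z_t|\le (1-t)|w|+t|z|$ before integrating. Your direct algebraic factorization $\langle Az,z\rangle-\langle Aw,w\rangle=\langle A(z-w),z+w\rangle$ is exactly what the paper's integral would compute if the integration were carried out before applying Cauchy--Schwarz, so the two arguments are equivalent at heart; yours is just more elementary in that it avoids the calculus detour. Both use the symmetry of $A$ at the same essential step.
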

\begin{proof}
We set $z_t=(1-t)\,w+t\,z$ for $t\in[0,1]$, then by basic Calculus we have 
\[
\begin{split}
\Big|\langle A\,z,z\rangle-\langle A\,w,w\rangle\Big|=\left|\int_0^1\frac{d}{dt} \langle A\,z_t,z_t\rangle\,dt\right|&=2\,\left|\int_0^1\langle A\,(z-w),z_t\rangle\,dt\right|\\
&\le 2\,|A\,(z-w)|\,\int_0^1 |z_t|\,dt\\
&\le  2\,|A\,(z-w)|\,\int_0^1 \Big((1-t)\,|w|+t\,|z|\Big)\,dt.
\end{split}
\]
By computing the last integral, we get the desired conclusion.
\end{proof}
The next estimate is quite standard, we include the proof for completeness.
\begin{lm}
\label{lm:integral}
Let $p> 2$, then for every $z,w\in\mathbb{R}^N$ we have
\[
\int_0^1 |t\,z+(1-t)\,w|^{p-2}\,dt\ge \frac{1}{4^{p-1}}\,(|z|+|w|)^{p-2}.
\]
\end{lm}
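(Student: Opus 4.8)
The plan is to restrict the integral to a subinterval on which $|tz+(1-t)w|$ admits a clean lower bound by a multiple of $\max\{|z|,|w|\}$, and then to convert that maximum into $|z|+|w|$ at the cost of a harmless factor. First I would note that the substitution $t\mapsto 1-t$ shows the left-hand side is symmetric under exchanging $z$ and $w$, so there is no loss of generality in assuming $|z|\ge|w|$; in particular $|z|\ge\frac12\,(|z|+|w|)$. Writing $\gamma:=p-2>0$, the target is
\[
\int_0^1 |t\,z+(1-t)\,w|^\gamma\,dt\ \ge\ \frac{1}{4^{\gamma+1}}\,(|z|+|w|)^\gamma ,
\]
since $4^{\gamma+1}=4^{p-1}$.

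Next I would keep only the values $t\in[1/2,1]$ and apply the reverse triangle inequality together with $|w|\le|z|$: for such $t$,
\[
|t\,z+(1-t)\,w|\ \ge\ t\,|z|-(1-t)\,|w|\ \ge\ t\,|z|-(1-t)\,|z|\ =\ (2t-1)\,|z|\ \ge\ 0 .
\]
Raising to the power $\gamma$ (legitimate, both sides being nonnegative) and integrating, with the change of variables $s=2t-1$, gives
\[
\int_0^1 |t\,z+(1-t)\,w|^\gamma\,dt\ \ge\ |z|^\gamma\int_{1/2}^1 (2t-1)^\gamma\,dt\ =\ \frac{|z|^\gamma}{2}\int_0^1 s^\gamma\,ds\ =\ \frac{|z|^\gamma}{2\,(\gamma+1)} .
\]

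Combining this with $|z|^\gamma\ge 2^{-\gamma}\,(|z|+|w|)^\gamma$ produces the lower bound $\dfrac{(|z|+|w|)^\gamma}{2^{\gamma+1}\,(\gamma+1)}$, so it remains only to verify the elementary numerical inequality $2^{\gamma+1}\ge\gamma+1$ for $\gamma>0$ (equivalently $2^x\ge x$ for $x\ge1$), which follows at once from convexity of $x\mapsto 2^x$ or by noting that $x\mapsto 2^x-x$ is increasing and positive on $[1,\infty)$. This yields $\dfrac{1}{2^{\gamma+1}\,(\gamma+1)}\ge\dfrac{1}{2^{2(\gamma+1)}}=\dfrac{1}{4^{\gamma+1}}=\dfrac{1}{4^{p-1}}$, which is the claim. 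I do not expect a genuine obstacle here; the only points requiring a little care are the choice of the cut-off $t=1/2$ (any fixed cut-off works, but this one keeps the constants transparent) and the observation that the crude estimate $|z|\ge\frac12(|z|+|w|)$ combined with $2^{\gamma+1}\ge\gamma+1$ is exactly what is needed to land on the stated constant $4^{-(p-1)}$.
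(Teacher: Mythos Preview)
Your proof is correct and follows essentially the same approach as the paper: reduce by symmetry to $|z|\ge|w|$, restrict the integral to a subinterval near $t=1$, and use the reverse triangle inequality to get $|tz+(1-t)w|\ge(2t-1)|z|$. The only cosmetic difference is that the paper cuts at $t=3/4$ and uses the uniform pointwise bound $(2t-1)|z|\ge\tfrac12|z|\ge\tfrac14(|z|+|w|)$ on $[3/4,1]$ (giving the constant $4^{-(p-1)}$ directly), whereas you cut at $t=1/2$, integrate $(2t-1)^\gamma$ exactly to get the sharper constant $\frac{1}{2^{\gamma+1}(\gamma+1)}$, and then weaken it via $2^{\gamma+1}\ge\gamma+1$ to match the stated bound.
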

\begin{proof}
We first suppose that $|z|>|w|$, then for every $t\ge 3/4$ we have
\[
\begin{split}
|t\,z+(1-t)\,w|\ge t\,|z|-(1-t)\,|w|&\ge (2\,t-1)\,|z|\\
&\ge \frac{1}{2}\,|z|\ge \frac{1}{4}\,(|z|+|w|).
\end{split}
\]
Thus in this case, we get
\[
\int_0^1 |t\,z+(1-t)\,w|^{p-2}\,dt\ge \int_\frac{3}{4}^1 |t\,z+(1-t)\,w|^{p-2}\,dt\ge \frac{1}{4^{p-1}}\,(|z|+|w|)^{p-2}.
\]
Similarly, if $|w|\ge |z|$ and $0\le t\le 1/4$, we have
\[
\begin{split}
|t\,z+(1-t)\,w|\ge (1-t)\,|w|-t\,|z|&\ge (1-2\,t)\,|w|\\
&\ge \frac{1}{2}\,|w|\ge \frac{1}{4}\,(|z|+|w|).
\end{split}
\]
By raising both sides to the power $p-2$ and integrating over $[0,1/4]$, we get the desired conclusion in this case, as well.
\end{proof}
The following interpolation inequality is well-known (see for example \cite[Theorem 12.83]{Le}), we focus here on the dependence of the constant on the exponent $q$.
\begin{prop}[Morrey--type inequality]
\label{prop:morrey}
Let $p>N$ and $1\le q<\infty$. There exists a constant $Q_{N,p}>0$, independent of $q$, such that for every $\varphi\in C^\infty_0(\mathbb{R}^N)$ we have 
\begin{equation}
\label{morrey2}
\|\varphi\|_{L^\infty(\mathbb{R}^N)}\le Q_{N,p}\,\left(\int_{\mathbb{R}^N} |\nabla \varphi|^p\,dx\right)^\frac{N}{p\,q-(q-p)\,N}\,\left(\int_{\mathbb{R}^N} |\varphi|^q\,dx\right)^\frac{p-N}{p\,q-(q-p)\,N}.
\end{equation}
\end{prop}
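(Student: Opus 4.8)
The plan is to reduce everything to the classical Morrey estimate and then keep careful track of how the exponents depend on $q$. First I would invoke the standard Morrey inequality for $p>N$ (see \cite[Theorem 12.83]{Le} or \cite[Lemma 7.14]{GT} and its consequences): there is $C_{N,p}>0$ such that
\[
|\varphi(x)-\varphi(y)|\le C_{N,p}\,\|\nabla\varphi\|_{L^p(\mathbb{R}^N)}\,|x-y|^{1-\frac{N}{p}},\qquad \mbox{ for every } x,y\in\mathbb{R}^N.
\]
We may assume $\varphi\not\equiv 0$. Since $\varphi\in C^\infty_0(\mathbb{R}^N)$ is continuous with compact support, it attains its supremum: let $x_0$ be such that $|\varphi(x_0)|=M:=\|\varphi\|_{L^\infty(\mathbb{R}^N)}$.

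Next I would run the usual ball argument. For $r>0$ and $x\in B_r(x_0)$, the Morrey estimate gives $|\varphi(x)|\ge M-C_{N,p}\,\|\nabla\varphi\|_{L^p}\,r^{1-N/p}$, so choosing
\[
r=\left(\frac{M}{2\,C_{N,p}\,\|\nabla\varphi\|_{L^p}}\right)^\frac{p}{p-N}
\]
we obtain $|\varphi|\ge M/2$ on $B_r(x_0)$, and therefore
\[
\int_{\mathbb{R}^N}|\varphi|^q\,dx\ge \int_{B_r(x_0)}|\varphi|^q\,dx\ge \left(\frac{M}{2}\right)^q\,\omega_N\,r^N=\frac{\omega_N}{2^q\,(2\,C_{N,p})^\frac{pN}{p-N}}\,\frac{M^{q+\frac{pN}{p-N}}}{\|\nabla\varphi\|_{L^p}^\frac{pN}{p-N}}.
\]
Setting $\alpha=q+\frac{pN}{p-N}=\frac{pq-(q-p)N}{p-N}$, rearranging, raising to the power $1/\alpha$, and using $\|\nabla\varphi\|_{L^p}^{\frac{pN}{p-N}}=\big(\int_{\mathbb{R}^N}|\nabla\varphi|^p\,dx\big)^{\frac{N}{p-N}}$, one recognizes exactly the exponents claimed in \eqref{morrey2}: indeed $\frac{N}{(p-N)\,\alpha}=\frac{N}{pq-(q-p)N}$ and $\frac{1}{\alpha}=\frac{p-N}{pq-(q-p)N}$.

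The only point that requires real care — and the reason the proposition is stated at all — is the uniformity of the constant in $q$. After the rearrangement the prefactor in front of the two integral factors is
\[
\left(\frac{2^q\,(2\,C_{N,p})^\frac{pN}{p-N}}{\omega_N}\right)^\frac{1}{\alpha}.
\]
I would bound this uniformly by observing that $\frac{q}{\alpha}=\frac{q\,(p-N)}{pq-(q-p)N}\in[0,1)$ for every $q\ge 1$ (so $2^{q/\alpha}<2$), while the remaining $q$-independent factors carry the exponent $\frac{1}{\alpha}=\frac{p-N}{pq-(q-p)N}$, which is positive and decreasing in $q$, hence bounded by its value at $q=1$. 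Taking $Q_{N,p}$ to be the supremum over $q\ge 1$ of the displayed prefactor then gives a finite constant depending only on $N$ and $p$, as required. As a sanity check, both sides of \eqref{morrey2} are invariant under $\varphi\mapsto \varphi(\lambda\,\cdot)$ and $\varphi\mapsto \mu\,\varphi$, which pins down the exponents uniquely and matches the computation above. The expected main obstacle is precisely this bookkeeping with the $q$-dependent powers; everything else is the textbook proof of Morrey's embedding.
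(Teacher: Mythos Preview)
Your proof is correct, but it follows a genuinely different route from the paper's. The paper first derives an \emph{additive} estimate
\[
\|\varphi\|_{L^\infty(\mathbb{R}^N)}\le \widetilde{C}\,\|\nabla\varphi\|_{L^p(\mathbb{R}^N)}+\widetilde{C}\,\|\varphi\|_{L^q(\mathbb{R}^N)},
\]
obtained by integrating the Morrey oscillation bound over a unit ball and applying H\"older, and then converts it into the multiplicative form \eqref{morrey2} via the dilation $\varphi_t(x)=\varphi(t\,x)$ and an explicit optimization in $t>0$; the $q$-uniformity of the constant is checked at the very end by inspecting the minimizer. You instead locate a maximum point $x_0$, use the Morrey estimate to force $|\varphi|\ge M/2$ on a ball whose radius is chosen in terms of $M$ and $\|\nabla\varphi\|_{L^p}$, and then lower-bound $\int|\varphi|^q$ on that ball. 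This is more direct: the multiplicative structure and the correct exponents fall out immediately from the choice of $r$, with no scaling step or optimization needed. The paper's approach, by contrast, makes the scale-invariance of \eqref{morrey2} more transparent (it is literally how the exponents are discovered), and its intermediate additive inequality is of independent interest. One minor remark on your uniformity argument: when you say the ``remaining $q$-independent factors carry the exponent $1/\alpha$'' and that this being decreasing in $q$ bounds them by their value at $q=1$, this is only literally true when the base exceeds $1$; when the base is below $1$ the factor is instead bounded by its limit $1$ as $q\to\infty$. Either way the supremum over $q\ge 1$ is finite, so your conclusion stands.
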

\begin{proof}
Let us set $\alpha=1-N/p$. From the classical {\it Morrey's inequality},
for every $x,y\in\mathbb{R}^N$ we have
\[
C\,|\varphi(x)-\varphi(y)|\le |x-y|^\alpha\,\left(\int_{\mathbb{R}^N} |\nabla \varphi|^p\,dx\right)^\frac{1}{p},
\]
for some $C=C(N,p)>0$. By using the triangle inequality, we get
\[
C\,|\varphi(x)|
\le |x-y|^\alpha\,\left(\int_{\mathbb{R}^N} |\nabla \varphi|^p\,dx\right)^\frac{1}{p}+C\,|\varphi(y)|.
\]
For every $x\in\mathbb{R}^N$, we integrate the previous inequality with respect to $y\in B_1(x)$. This yields
\begin{equation}
\label{premorrey}
C\,\omega_N\,|\varphi(x)|\le \int_{B_1(x)}|x-y|^\alpha\,dy\,\left(\int_{\mathbb{R}^N} |\nabla \varphi|^p\,dx\right)^\frac{1}{p}+C\,\int_{B_1(x)} |\varphi(y)|\,dy.
\end{equation}
We observe that
\[
\int_{B_1(x)}|x-y|^\alpha\,dy=\frac{N\,\omega_N}{N+\alpha},
\]
while by using H\"older's inequality\footnote{For $q=1$ this is not needed.}
\[
\int_{B_1(x)} |\varphi(y)|\,dy\le |B_1(x)|^{1-\frac{1}{q}}\,\left(\int_{B_1(x)} |\varphi(y)|^q\,dy\right)^\frac{1}{q}\le \omega_N^\frac{q-1}{q}\,\left(\int_{\mathbb{R}^N} |\varphi(y)|^q\,dy\right)^\frac{1}{q}.
\]
By using these facts in \eqref{premorrey}, we finally get
\[
|\varphi(x)|\le \frac{1}{C}\,\frac{N}{N+\alpha}\,\left(\int_{\mathbb{R}^N} |\nabla \varphi|^p\,dx\right)^\frac{1}{p}+\omega_N^{-\frac{1}{q}}\,\left(\int_{\mathbb{R}^N} |\varphi|^q\,dx\right)^\frac{1}{q}.
\]
By arbitrariness of $x\in\mathbb{R}^N$, this proves 
\[
\|\varphi\|_{L^\infty(\mathbb{R}^N)}\le \frac{1}{C}\,\frac{N}{N+\alpha}\,\left(\int_{\mathbb{R}^N} |\nabla \varphi|^p\,dx\right)^\frac{1}{p}+\omega_N^{-\frac{1}{q}}\,\left(\int_{\mathbb{R}^N} |\varphi|^q\,dx\right)^\frac{1}{q}.
\]
Observe that the function $q\mapsto \omega_N^{-1/q}$ is continuous and monotone. Thus, we can infer that
\[
\omega_N^{-\frac{1}{q}}\le \max\left\{\frac{1}{\omega_N},1\right\},\qquad \mbox{ for every } 1\le q<+\infty,
\] 
so to obtain
\begin{equation}
\label{premorrey2}
\|\varphi\|_{L^\infty(\mathbb{R}^N)}\le \widetilde{C}\,\left(\int_{\mathbb{R}^N} |\nabla \varphi|^p\,dx\right)^\frac{1}{p}+\widetilde{C}\,\left(\int_{\mathbb{R}^N} |\varphi|^q\,dx\right)^\frac{1}{q},
\end{equation}
with $\widetilde{C}=\widetilde{C}(N,p)>0$. 
\par
The conclusion now is standard. 
We take $\varphi\in C^\infty_0(\mathbb{R}^N)$ and apply \eqref{premorrey2} to the function $\varphi_t(x)=\varphi(t\,x)$, where $t>0$. By observing that 
\[
\|\varphi_t\|_{L^\infty(\mathbb{R}^N)}=\|\varphi\|_{L^\infty(\mathbb{R}^N)},
\]
and making the change of variables $t\,x=y$, we thus obtain
\[
\|\varphi\|_{L^\infty(\mathbb{R}^N)}\le \widetilde{C}\,t^{1-\frac{N}{p}}\,\left(\int_{\mathbb{R}^N} |\nabla \varphi|^p\,dy\right)^\frac{1}{p}+\widetilde{C}\,t^{-\frac{N}{q}}\,\left(\int_{\mathbb{R}^N} |\varphi|^q\,dy\right)^\frac{1}{q},
\]
which holds for every $t>0$. In particular, we obtain
\[
\|\varphi\|_{L^\infty(\mathbb{R}^N)}\le \widetilde{C}\,\inf_{t>0}\left[t^{1-\frac{N}{p}}\,\left(\int_{\mathbb{R}^N} |\nabla \varphi|^p\,dy\right)^\frac{1}{p}+t^{-\frac{N}{p}}\,\left(\int_{\mathbb{R}^N} |\varphi|^p\,dy\right)^\frac{1}{p}\right].
\]
For $p>N$, it is easily seen that the function
\[
h(t)=t^{1-\frac{N}{p}}\,A+t^{-\frac{N}{q}}\,B,\qquad \mbox{ for }t>0,
\] 
is minimal for
\[
t=\left(\frac{B}{A}\, \frac{p}{p-N}\,\frac{N}{q}\right)^\frac{1}{1-\frac{N}{p}+\frac{N}{q}}.
\]
By using this fact with
\[
A=\left(\int_{\mathbb{R}^N} |\nabla \varphi|^p\,dy\right)^\frac{1}{p}\qquad \mbox{ and }\qquad B=\left(\int_{\mathbb{R}^N} |\varphi|^q\,dy\right)^\frac{1}{q},
\]
and computing the minimum above, we finally get the desired estimate \eqref{morrey2} with
\[
Q_{N,p,q}=\widetilde{C}\,\left(1+\frac{p}{p-N}\,\frac{N}{q}\right)\,\left(\frac{p}{p-N}\,\frac{N}{q}\right)^{-\frac{N}{q}\,\frac{1}{1-\frac{N}{p}+\frac{N}{q}}}.
\]
Finally, if we observe that the function
\[
a \mapsto \left(\frac{p}{p-N}\,a\right)^{-\frac{a}{1-\frac{N}{p}+a}},
\]
is bounded for $0< a\le N$, we easily get \eqref{morrey2} with a constant independent of $q$.
\end{proof}

\section{A completion space}
\label{sec:B}

In what follows, we keep on using the notation of Section \ref{sec:3}. If $\Omega\subset\mathbb{R}^N$ is an open bounded connected set, with $C^{1,\alpha}$ boundary, for some $0<\alpha<1$, for every $\delta\ll 1$ we set 
\[
\Omega_\delta=\{x\in\Omega\, :\, \mathrm{dist}(x,\partial\Omega)\le \delta\}.
\]
Let $p>2$, we still denote $u\in W^{1,p}_0(\Omega)$ the unique positive extremal of 
\[
\lambda_p(\Omega)=\min_{\varphi\in W^{1,p}_0(\Omega)}\left\{\int_\Omega |\nabla \varphi|^p\, dx\, :\, \int_\Omega |\varphi|^p=1\right\},
\]
and recall that by Lemma \ref{lm:embeddo}
\[
X^{1,2}(\Omega;|\nabla u|^{p-2}):=\left\{\varphi\in W^{1,1}(\Omega)\cap L^2(\Omega)\, :\, \int_\Omega |\nabla u|^{p-2}\,|\nabla \varphi|^2\,dx<+\infty\right\}.
\]
By $X^{1,2}_0(\Omega;|\nabla u|^{p-2})$ we still indicate the completion of $C^\infty_0(\Omega)$ with respect to the norm
\[
\|\varphi\|_{X^{1,2}(\Omega;|\nabla u|^{p-2})}=\|\varphi\|_{L^2(\Omega)}+\left(\int_\Omega |\nabla u|^{p-2}\,|\nabla \varphi|^2\,dx\right)^\frac{1}{2}.
\]
\begin{lm}
\label{lm:bonucci}
Let $\Omega\subset\mathbb{R}^N$ be an open bounded connected set, with $C^{1,\alpha}$ boundary, for some $0<\alpha<1$. 
There exists $\delta>0$ such that we have the continuous inclusion
\[
X^{1,2}(\Omega;|\nabla u|^{p-2})\subset W^{1,2}(\Omega_\delta).
\]
Moreover, we also have 
\[
X^{1,2}(\Omega;|\nabla u|^{p-2})\cap W^{1,1}_0(\Omega)= X^{1,2}_0(\Omega;|\nabla u|^{p-2}).
\]
\end{lm}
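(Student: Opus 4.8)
The plan is to prove the two assertions in turn; the continuous inclusion is elementary, while the identity of the two spaces is the substantive point. For the inclusion $X^{1,2}(\Omega;|\nabla u|^{p-2})\subset W^{1,2}(\Omega_\delta)$: since $u$ is the positive extremal of $\lambda_p(\Omega)=\lambda_{p,p}(\Omega)$, Theorem \ref{teo:uniformi} applies with $q=p$ and yields $\delta>0$, $\mu_0>0$ with $\mu_0\le|\nabla u|$ on $\Omega_\delta$ and $\|u\|_{C^1(\overline\Omega)}\le L$. As $p>2$ this gives $\mu_0^{p-2}\le|\nabla u|^{p-2}\le L^{p-2}$ on $\Omega_\delta$, so for $\varphi\in X^{1,2}(\Omega;|\nabla u|^{p-2})$ one has
\[
\mu_0^{p-2}\int_{\Omega_\delta}|\nabla\varphi|^2\,dx\le\int_\Omega|\nabla u|^{p-2}|\nabla\varphi|^2\,dx<+\infty,
\]
together with $\varphi\in L^2(\Omega_\delta)$, whence $\varphi\in W^{1,2}(\Omega_\delta)$ with $\|\varphi\|_{W^{1,2}(\Omega_\delta)}\le C\,\|\varphi\|_{X^{1,2}(\Omega;|\nabla u|^{p-2})}$. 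In particular, on $\Omega_\delta$ the $X^{1,2}$-seminorm and the $W^{1,2}$-seminorm are equivalent, a fact used below.

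For the identity, the inclusion $X^{1,2}_0(\Omega;|\nabla u|^{p-2})\subset X^{1,2}(\Omega;|\nabla u|^{p-2})\cap W^{1,1}_0(\Omega)$ is already contained in Lemma \ref{lm:embeddo}, so only the reverse containment needs work. Given $\varphi$ in the intersection, I would approximate it by $C^\infty_0(\Omega)$ functions in the $X^{1,2}$ norm in two steps. \emph{Step A (cutting off near $\partial\Omega$).} With $d(x)=\mathrm{dist}(x,\partial\Omega)$ and a smooth $\xi$ vanishing on $[0,1]$ and equal to $1$ on $[2,\infty)$, set $\varphi_\tau=\xi(d/\tau)\,\varphi$; then $\mathrm{supp}\,\varphi_\tau\Subset\Omega$ and I claim $\varphi_\tau\to\varphi$ in the $X^{1,2}$ norm as $\tau\to0$. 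Indeed $\varphi_\tau\equiv\varphi$ on $\Omega\setminus\Omega_\delta$ once $2\tau<\delta$, so by the equivalence just recorded it suffices to check $\varphi_\tau\to\varphi$ in $W^{1,2}(\Omega_\delta)$; now $\varphi\in W^{1,2}(\Omega_\delta)$ by the first part and has vanishing trace on $\partial\Omega$ because $\varphi\in W^{1,1}_0(\Omega)$ and $\partial\Omega$ is Lipschitz, so Hardy's inequality (valid since $\partial\Omega\in C^{1,\alpha}$) gives $\int_{\Omega_\delta}|\varphi|^2 d^{-2}\,dx<\infty$. Writing $\nabla\varphi_\tau-\nabla\varphi=(\xi(d/\tau)-1)\nabla\varphi+\varphi\,\nabla\xi(d/\tau)$, the first summand tends to $0$ in $L^2(\Omega_\delta)$ by dominated convergence, while $\|\varphi\,\nabla\xi(d/\tau)\|_{L^2(\Omega_\delta)}^2\le C\tau^{-2}\int_{\{\tau\le d\le2\tau\}}|\varphi|^2\,dx\le C\int_{\{\tau\le d\le2\tau\}}|\varphi|^2 d^{-2}\,dx\to0$ as $\tau\to0$. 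Since $X^{1,2}_0(\Omega;|\nabla u|^{p-2})$ is closed in $X^{1,2}(\Omega;|\nabla u|^{p-2})$, we are reduced to showing that every $\psi\in X^{1,2}(\Omega;|\nabla u|^{p-2})$ with $\mathrm{supp}\,\psi\Subset\Omega$ belongs to $X^{1,2}_0(\Omega;|\nabla u|^{p-2})$.

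\emph{Step B (interior density), the main obstacle.} This last point is a Meyers--Serrin-type density statement for the weighted space, and it is where the real work lies. If $|\nabla u|^{p-2}$ were bounded below on $\mathrm{supp}\,\psi$ it would follow at once from the classical $H=W$ argument (partition of unity plus mollification), using that $|\nabla u|^{p-2}$ is continuous on $\overline\Omega$ and bounded by $L^{p-2}$. The genuine difficulty is the interior critical set $Z=\{\nabla u=0\}$, which is non-empty (it contains a maximum point of $u$): by Theorem \ref{teo:uniformi} it is a compact subset of $\Omega\setminus\Omega_\delta$, and by estimate \eqref{DS} of Theorem \ref{teo:sweden} it has measure zero and $|\nabla u|^{-r}\in L^1(\Omega)$ for every $r<p-1$. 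The plan is to split $\psi=(1-\zeta)\psi+\zeta\psi$, where $\zeta\in C^\infty_0(\Omega)$ equals $1$ near $Z$ and is supported in a small neighbourhood of $Z$: on $\mathrm{supp}(1-\zeta)$ the weight is bounded below, so $(1-\zeta)\psi\in W^{1,2}$ has compact support and its mollifications converge to it in $W^{1,2}$, hence in $X^{1,2}$; and one must show that $\|\zeta\psi\|_{X^{1,2}(\Omega;|\nabla u|^{p-2})}$ can be made arbitrarily small by shrinking the neighbourhood of $Z$. The $L^2$ part and the term $\int_\Omega|\nabla u|^{p-2}\zeta^2|\nabla\psi|^2\,dx$ are handled by absolute continuity of the integral (using $|Z|=0$), but the term $\int_\Omega|\nabla u|^{p-2}|\psi|^2|\nabla\zeta|^2\,dx$ — in which the weight is small and $|\nabla\zeta|$ is large — is the crux: to control it one would combine the Hölder decay $|\nabla u(x)|^{p-2}\le C\,\mathrm{dist}(x,Z)^{\chi(p-2)}$ coming from the uniform $C^{1,\chi}$ bound, a logarithmic cut-off around $Z$, and the measure estimate $|\{\mathrm{dist}(\cdot,Z)\le t\}|\le C_r\,t^{r\chi}$ for $r<p-1$, which follows from $|\nabla u|^{-r}\in L^1(\Omega)$ via Chebyshev's inequality. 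This balance between the degeneracy of the weight and the size of $Z$ is the delicate point of the argument; everything else is routine. Once it is settled, $\psi\in X^{1,2}_0(\Omega;|\nabla u|^{p-2})$, hence $\varphi\in X^{1,2}_0(\Omega;|\nabla u|^{p-2})$, and the identity is proved.
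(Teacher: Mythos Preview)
Your argument for the continuous inclusion is correct and matches the paper. Step~A (cutting off near $\partial\Omega$ via Hardy's inequality) is also correct and is a clean alternative to the paper's route, which instead appeals to trace theory: from $\varphi\in W^{1,2}(\Omega_\delta)$ with vanishing trace on $\partial\Omega$ it produces a sequence $\{\psi_n\}\subset C^\infty(\overline{\Omega_\delta})$ vanishing near $\partial\Omega$ and converging to $\varphi$ in $W^{1,2}(\Omega_\delta)$.

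The genuine gap is in Step~B. Your plan is to make $\|\zeta\psi\|_{X^{1,2}}$ small by shrinking a cutoff $\zeta$ around $Z$, but the advertised balance does not close. With $|\nabla u|^{p-2}\le C\,d_Z^{\chi(p-2)}$, a logarithmic cutoff $|\nabla\zeta_k|\le C/(k\,d_Z)$ supported in $\{e^{-2k}\le d_Z\le e^{-k}\}$, and the measure bound $|\{d_Z\le t\}|\le C\,t^{r\chi}$ (all of which are correct), the cross term $\int |\nabla u|^{p-2}\,|\psi|^2\,|\nabla\zeta_k|^2$ is controlled---after H\"older with $\psi\in L^\sigma$---only when, roughly, $\chi\big(2(p-2)+r(1-2/\sigma)\big)\ge 4$. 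Since $r<p-1$ and $1-2/\sigma<1/((2p-3)N)$, this forces $\chi>2/(p-2)$, which is impossible for $2<p\le 4$ and in any case need not hold. The H\"older decay of the weight near $Z$ is simply too weak to absorb the $d_Z^{-2}$ coming from $|\nabla\zeta|^2$, so the ``delicate balance'' cannot be achieved with the available estimates.

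The paper sidesteps this by \emph{not} attempting to excise $Z$. On the interior region $\Omega\setminus\Omega_{\delta/4}$ it approximates $\varphi$ directly by mollification, obtaining $f_n\in C^\infty(\Omega)$ with
\[
\int_{\Omega\setminus\Omega_{\delta/4}}|\nabla u|^{p-2}\,|\nabla(f_n-\varphi)|^2\,dx\longrightarrow 0,
\]
relying only on the continuity and boundedness of the weight $|\nabla u|^{p-2}$ on $\overline\Omega$; then it glues $f_n$ with the boundary approximant $\psi_n$ via a smooth partition of unity $\zeta_\delta+\eta_\delta(1-\zeta_\delta)\equiv 1$ supported in $\Omega\setminus\Omega_{\delta/4}$ and $\Omega_\delta$ respectively. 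The point is that mollifying \emph{through} $Z$ avoids manufacturing the singular term $\psi\,\nabla\zeta$ that your approach creates and then cannot control.
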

\begin{proof}
The first statement easily follows from Theorem \ref{teo:uniformi}. Indeed, we have existence of $\delta>0$ and $\mu_0>0$ such that
\[
|\nabla u|\ge \mu_0,\qquad \mbox{ in }\Omega_\delta.
\]
This entails that 
\[
\int_{\Omega_\delta} |\nabla \varphi|^2\,dx\le \mu_0^{2-p}\,\int_\Omega |\nabla u|^{p-2}\,|\nabla \varphi|^2\,dx,\qquad \mbox{ for every }\varphi\in X^{1,2}(\Omega;|\nabla u|^{p-2}),
\]
which proves the desired inclusion.
\vskip.2cm\noindent
In order to characterize the space $X^{1,2}_0(\Omega;|\nabla u|^{p-2})$, we first observe that 
\[
X^{1,2}(\Omega;|\nabla u|^{p-2})\cap W^{1,1}_0(\Omega)\supset X^{1,2}_0(\Omega;|\nabla u|^{p-2})\cap W^{1,1}_0(\Omega)=X^{1,2}_0(\Omega;|\nabla u|^{p-2}),
\]
thanks to Lemma \ref{lm:embeddo}.
\par
To prove the reverse inclusion, we now assume that $\varphi\in X^{1,2}(\Omega;|\nabla u|^{p-2})\cap W^{1,1}_0(\Omega)$. 
We need to show that there exists a sequence $\{\varphi_n\}_{n\in\mathbb{N}}\subset C^\infty_0(\Omega)$ such that 
\[
\lim_{n\to\infty}\left[\int_\Omega |\varphi-\varphi_n|^2\,dx+\int_\Omega |\nabla u|^{p-2}\,|\nabla \varphi-\nabla \varphi_n|^2\,dx\right]=0.
\]
By \cite[Theorem 18.7]{Le}, we know that 
\[
W^{1,1}_0(\Omega)=\Big\{\psi\in W^{1,1}(\Omega)\, :\, \mathrm{Tr}_{\partial\Omega}(\psi)=0\Big\},
\]
where we denote by $\mathrm{Tr}_{\partial\Omega}$ the trace of a function.
Thus $\varphi$ has a vanishing trace, as an element of $W^{1,1}(\Omega)$. On the other hand, from the first part of the proof, we know that $\varphi\in W^{1,2}(\Omega_\delta)$ and observe that we have a well-defined trace operator $\mathrm{Tr}_{\partial\Omega}:W^{1,2}(\Omega_\delta)\to L^2(\partial\Omega)$, see again \cite[Chapter 18]{Le}. By uniqueness of the trace, we thus get that
\[
\varphi\in \Big\{\psi\in W^{1,2}(\Omega_\delta)\, :\, \mathrm{Tr}_{\partial\Omega}(\psi)=0\Big\},
\]
and the latter coincides with the closure in $W^{1,2}(\Omega_\delta)$ of functions in $C^\infty(\overline{\Omega_\delta})$ that vanish in a neighborhood of $\partial\Omega$, thanks to the usual construction of the trace operator on Lipschitz sets (it is sufficient to adapt the proof of the aforementioned \cite[Theorem 18.7]{Le}). Then there exists a sequence $\{\psi_n\}_{n\in\mathbb{N}}\subset C^\infty(\overline{\Omega_\delta})$ such that 
\[
\lim_{n\to\infty} \|\psi_n-\varphi\|_{W^{1,2}(\Omega_\delta)}=0,
\]
with each $\psi_n$ vanishing in a neighborhood of $\partial\Omega$. Moreover, by means of standard convolution techniques, we can construct a sequence $\{f_n\}_{n\in\mathbb{N}}\subset C^\infty(\Omega)$ such that 
\[
\lim_{n\to\infty}\left[\int_{\Omega\setminus\Omega_\frac{\delta}{4}} |\varphi-f_n|^2\,dx\int_{\Omega\setminus\Omega_\frac{\delta}{4}} |\nabla u|^{p-2}\,|\nabla \varphi-\nabla f_n|^2\,dx\right]=0.
\]
We now take two smooth cut-off functions $\eta_\delta$ and $\zeta_\delta$ such that
\begin{itemize}
\item $\eta_\delta\in C^\infty(\overline\Omega)$ and 
\[
\eta_\delta\equiv 1 \mbox{ on }\overline{\Omega_\frac{\delta}{2}},\quad 0\le \eta_\delta\le 1,\quad \eta_\delta\equiv 0 \mbox{ on } \Omega\setminus \Omega_{\delta}; 
\]
\item $\zeta_\delta\in C^\infty_0(\Omega)$ and
\[
\zeta_\delta\equiv 1 \mbox{ on }\Omega\setminus\Omega_\frac{\delta}{2},\quad 0\le \zeta_\delta\le 1,\quad \zeta_\delta\equiv 0 \mbox{ on } \Omega_\frac{\delta}{4}.
\]
\end{itemize}
Finally, we set
\[
\varphi_n=\zeta_\delta\,f_n+\eta_\delta\,(1-\zeta_\delta)\,\psi_n.
\]
Observe that there is no mystery in such a choice: this function has the crucial feature 
\[
\zeta_\delta+\eta_\delta\,(1-\zeta_\delta)\equiv 1,\qquad \mbox{ on }\Omega,
\]
i.e. $\zeta_\delta$ and $\eta_\delta\,(1-\zeta_\delta)$ form a (very simple) partition of unity. This in particular implies
\begin{equation}
\label{gradi}
\nabla \zeta_\delta=-\nabla(\eta_\delta\,(1-\zeta_\delta)),\qquad \mbox{ on }\Omega.
\end{equation}
By construction we have $\varphi_n\in C^\infty_0(\Omega)$ and it is easily seen that
\[
\begin{split}
\lim_{n\to\infty}\int_\Omega |\varphi_n-\varphi|^2\,dx=0.
\end{split}
\]
As for the gradients, we have 
\[
\begin{split}
\int_\Omega |\nabla u|^{p-2}\,|\nabla \varphi_n-\nabla \varphi|^2\,dx&\le 2\,\int_\Omega |\nabla u|^{p-2}\,|\nabla \zeta_\delta\,f_n+\nabla (\eta_\delta\,(1-\zeta_\delta))\, \psi_n|^2\,dx\\
&+2\,\int_\Omega|\nabla u|^{p-2}\, |\zeta_\delta\,\nabla (f_n-\varphi)+\eta_\delta\,(1-\zeta_\delta)\,\nabla (\psi_n-\varphi)|^2\,dx\\
&\le C_\delta\,\int_{\Omega_\frac{\delta}{2}\setminus\Omega_\frac{\delta}{4}}|\nabla u|^{p-2}\, |f_n-\psi_n|^2\,dx\\
&+ C\,\int_{\Omega\setminus \Omega_\frac{\delta}{4}}|\nabla u|^{p-2}\, |\nabla (f_n-\varphi)|^2\,dx+C\,\int_{\Omega_\delta}|\nabla u|^{p-2}\, |\nabla (\psi_n-\varphi)|^2\,dx
\end{split}
\]
where we used \eqref{gradi} and the properties of $\eta_\delta$ and $\zeta_\delta$. By recalling the properties of $f_n$ and $\psi_n$ and using that $|\nabla u|\in L^\infty(\Omega)$, we can obtain
\[
\lim_{n\to\infty} \int_\Omega |\nabla u|^{p-2}\,|\nabla \varphi_n-\nabla \varphi|^2\,dx=0.
\]
This concludes the proof.
\end{proof}

\section{Uniform negative integrability for the gradient}
\label{sec:C}

The goal of this section is to a provide a suitable Riesz--type estimate on some {\it negative} powers of $|\nabla u_q|$ (see Theorem \ref{teo:sweden} below), where $u_q$ denotes a positive extremal of \eqref{eq:pq}. This is the cornerstone on which Theorem \ref{teo:WSI}, Corollary \ref{coro:nakata} and Corollary \ref{lem:compact}
 are built. 
As we have seen, these in turn are crucial tools for the proof of our main result. 
 The proofs are taken directly from \cite{DS}, but as explained in the introduction we need a uniform control of the a priori estimates. Occasionally, we will use the abbreviated notation $u$ in place of $u_q$, when clear from the context.
\subsection{The linearized equation}
We first observe that $u\in C^{1,\chi}(\overline\Omega)$ and the critical set
\[
Z:=\Big\{x\in\Omega\, :\, |\nabla u|=0\Big\},
\]
is a compact set contained in $\Omega$, thanks to Theorem \ref{teo:uniformi}. Thus $\Omega\setminus Z$ is an open set. Moreover, on this set the equation \eqref{eq:eq} is not degenerate, thus by classical Elliptic Regularity we can infer that $u\in C^2(\Omega\setminus Z)$. We then take $\psi\in C^\infty_0(\Omega\setminus Z)$ and test the weak formulation of \eqref{eq:eq} against a partial derivative $\psi_{x_i}$. The regularity of $u$ on $\Omega\setminus Z$ permits to integrate by parts, so to obtain 
\begin{equation}
\label{eq:lineqweak}
\begin{split}
\int_\Omega  \langle |\nabla u|^{p-2}\,\nabla u_{x_i},\nabla \psi\rangle\,dx&+(p-2)\,\int_\Omega |\nabla u|^{p-4}\,\langle \nabla u, \nabla u_{x_i}\rangle\, \langle\nabla u,\nabla \psi \rangle\, dx\\
& =(q-1)\, \lambda_{p,q}\, \int_\Omega u^{q-2}\,u_{x_i}\,\psi \,dx, \qquad \mbox{ for every } \psi\in C^\infty_0(\Omega\setminus Z).
\end{split}
\end{equation}
Here we used the more compact notation $\lambda_{p,q}$ for $\lambda_{p,q}(\Omega)$.
By density, we can even admit test functions 
$\psi\in W^{1,2}(\Omega)$ with compact support in the open set $\Omega\setminus Z$ in \eqref{eq:lineqweak}, thanks to \cite[Lemma 9.5]{Brezis}.
\vskip.2cm\noindent
\begin{oss}[Hessian terms]
\label{oss:hessian}
We seize the opportunity to mention that, since $u\in L^\infty(\Omega)$, the right-hand side of \eqref{eq:eq} is bounded. Then we have
\begin{equation}
\label{CM}
|\nabla u|^{p-2}\,\nabla u\in W^{1,2}_{\rm loc}(\Omega), 
\end{equation}
thanks to \cite[Theorem 2.1]{CM} (see also \cite[Theorem 1.1]{ACF} and \cite[Theorem 1.2]{GM} for some generalizations). In addition, as noted in \cite[Remark 2.3]{DS}, the weak gradient of $|\nabla u|^{p-2}\,\nabla u$ coincides with the classical gradient in $\Omega\setminus Z$ (where $u$ is in fact smooth), while  
\[
\nabla (|\nabla u|^{p-2}\,\nabla u)=0,\qquad \mbox{ a.\,e. on } Z,
\]
since by definition $Z$ coincides with the zero level set of $|\nabla u|^{p-2}\,\nabla u$, thus it is sufficient to use a standard property of Sobolev functions (see for example \cite[Theorem 6.19]{LL}). By further observing that $|Z|=0$ (by virtue of \cite[Theorem 2.3]{DS} or directly from \eqref{DS} below), we can actually say that the weak gradient of $|\nabla u|^{p-2}\,\nabla u$ coincides with the classical gradient almost everywhere in $\Omega$. In light of this discussion, we will keep on writing the formula
\[
(|\nabla u|^{p-2}\,\nabla u)_{x_i}=|\nabla u|^{p-2}\,\nabla u_{x_i}+(p-2)\,\langle \nabla u,\nabla u_{x_i}\rangle\,\nabla u,
\]
and observe that this expression makes sense almost everywhere on $\Omega$. 
\par
The same remarks apply whenever we deal with functions of the form $f(|\nabla u|^{p-1})$, where $f$ is a locally Lipschitz function. 
Indeed, since $u$ is globally Lipschitz by Theorem \ref{teo:uniformi}, any function of the form $f(|\nabla u|^{p-1})$ with $f$ locally Lipschitz lies in $W^{1,2}_\text{loc}(\Omega)$, thanks to \eqref{CM}. This justifies the admissibility of test functions used in the proof of Proposition \ref{prop:singest} below, as well as the use of the relevant Chain Rule formula.
\par
In the same spirit, we will use the notation $D^2 u$ for the matrix function defined by
\[
\left\{\begin{array}{rl} 0, & \mbox{ on } Z,\\
D^2 u, & \mbox{ elsewhere},
\end{array}
\right.
\]
and note that this coincides almost everywhere with the classical Hessian.
\end{oss}
\subsection{Intermediate estimates}
In what follows we will only treat the case $N\ge3$ in detail. The case $N=2$ can be treated with very minor modifications. The proposition below together with Corollary \ref{cor:dg2} correspond to \cite[Theorem 2.2]{DS}.
\begin{prop}
\label{prop:firstsingest} 
Let $2<p<q_0<p^*$ and let $\Omega\subset\mathbb{R}^N$ be an open bounded connected set, with boundary of class $C^{1,\alpha}$, for some $0<\alpha<1$. For every $p\le q\le q_0$, let $u\in W^{1,p}_0(\Omega)$ be a positive minimizer of \eqref{eq:pq}. Let $\beta\in [0,1)$ and 
\[
\left\{\begin{array}{cc}
\gamma<N-2,& \mbox{ if }N\ge 3,\\
\gamma\leq 0,& \mbox{ if } N=2.
\end{array}
\right.
\] 
Then for every $i\in\{1,\dots,N\}$ and every open set $E\Subset \Omega$ we have
\[
|u_{x_i}|^\frac{p-2-\beta}{2}\,u_{x_i}\in W^{1,2}(E).
\]
Moreover, if we set $Z_i=\{y\in \Omega\,:\, u_{x_i}(y)=0\}$, we have the estimate
\begin{equation}
\label{erik1}
\sup_{x\in\Omega}\int_{E\setminus Z_i} \frac{|\nabla u(y)|^{p-2}\,|u_{x_i}(y)|^{-\beta}\, |\nabla u_{x_i}(y)|^2}{|x-y|^\gamma}\,dy\leq C_1,
\end{equation}
for some $C_1=C_1(N,p,q_0,\alpha,\beta,\gamma,\Omega,\mathrm{dist}(E,\partial \Omega))>0$. 
\end{prop}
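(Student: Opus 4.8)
\emph{Approach.} By Theorem~\ref{teo:uniformi}, $u=u_q$ belongs to $C^{1,\chi}(\overline\Omega)$ with a bound $\|u\|_{C^{1,\chi}(\overline\Omega)}\le L$ depending only on $N,p,q_0,\alpha,\Omega$; consequently $Z=\{|\nabla u|=0\}$ is a compact subset of $\Omega$, $|Z|=0$, $u\in C^2(\Omega\setminus Z)$, and $|\nabla u|^{p-2}\nabla u\in W^{1,2}_{\rm loc}(\Omega)$ by \cite{CM} with a local bound that is again uniform in $q$, since the right-hand side $\lambda_{p,q}\,u^{q-1}$ of \eqref{eq:eq} is bounded uniformly in $q$ thanks to \eqref{lowerlambda} and Proposition~\ref{prop:Linfty}. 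Differentiating \eqref{eq:eq} gives the linearized identity \eqref{eq:lineqweak} for $v_i:=u_{x_i}$, valid for test functions that are $W^{1,2}$ with compact support in $\Omega\setminus Z$. We treat $N\ge 3$ (the case $N=2$ is analogous, with the easier restriction $\gamma\le 0$). Fix $i$, open sets $E\Subset E'\Subset\Omega$, a cut-off $\eta\in C^\infty_0(E')$ with $\eta\equiv1$ on $E$ and $\|\nabla\eta\|_{L^\infty}\le C(\mathrm{dist}(E,\partial\Omega))$, and a point $x\in\Omega$. The plan is to test \eqref{eq:lineqweak} with
\[
\psi=\zeta_k\,g_\varepsilon(v_i)\,\eta^2\,K^\delta_x,
\]
where $g_\varepsilon(s)=s\,(\varepsilon+s^2)^{-\beta/2}$ is a $C^1$ odd regularization of $s\,|s|^{-\beta}$ with $g_\varepsilon'\ge0$, $|g_\varepsilon(s)|\le|s|^{1-\beta}$, $g_\varepsilon'(s)\to(1-\beta)\,|s|^{-\beta}$ and $g_\varepsilon'(s)^{-1}|s|^{2(1-\beta)}\le C(\beta,L)$ for $|s|\le L$, $\varepsilon\le1$ (here $\beta<1$ is essential); $K^\delta_x(y)=(\delta^2+|x-y|^2)^{-\gamma/2}$ is a smooth truncation of the Riesz kernel, with $K^\delta_x\nearrow|x-y|^{-\gamma}$ as $\delta\to0$, $|\nabla K^\delta_x|\le\gamma\,(K^\delta_x)^{(\gamma+1)/\gamma}$, and $\int_{E'}|x-y|^{-\gamma}\,dy$, $\int_{E'}|x-y|^{-\gamma-2}\,dy\le C(N,\gamma,\mathrm{diam}\,\Omega)$ uniformly in $x$, the second bound being available precisely because $\gamma<N-2$; and $\zeta_k\in C^\infty_0(\Omega\setminus Z)$, $0\le\zeta_k\le1$, $\zeta_k\equiv1$ on $\{\mathrm{dist}(\cdot,Z)\ge1/k\}$, $|\nabla\zeta_k|\le Ck$. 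With these choices $\psi$ is $C^1$ with compact support in $\Omega\setminus Z$, hence admissible.

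Expanding the left-hand side of \eqref{eq:lineqweak}, one isolates the nonnegative leading term
\[
T_{\varepsilon,\delta,k}:=\int_\Omega|\nabla u|^{p-2}\,g_\varepsilon'(v_i)\,|\nabla v_i|^2\,\zeta_k\,\eta^2\,K^\delta_x\,dy
\]
(the companion contribution, which involves $|\nabla u|^{p-4}\langle\nabla u,\nabla v_i\rangle^2\,g_\varepsilon'(v_i)\ge0$, is simply discarded), plus error terms each carrying a factor $\nabla\eta$, $\nabla K^\delta_x$, or $\nabla\zeta_k$. The right-hand side of \eqref{eq:lineqweak} equals $(q-1)\,\lambda_{p,q}\int_\Omega u^{q-2}\,v_i\,g_\varepsilon(v_i)\,\zeta_k\,\eta^2\,K^\delta_x\,dy$ and is bounded, using $|v_i\,g_\varepsilon(v_i)|\le|\nabla u|^{2-\beta}\le L^{2-\beta}$ together with the uniform $L^\infty$ bound on $u$ and \eqref{lowerlambda}, by a constant of the required form, uniformly in $x,\varepsilon,\delta,k$. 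The $\nabla\eta$- and $\nabla K^\delta_x$-errors are controlled by Cauchy--Schwarz, splitting $|\nabla v_i|=g_\varepsilon'(v_i)^{1/2}|\nabla v_i|\cdot g_\varepsilon'(v_i)^{-1/2}$ and, in the $\nabla K^\delta_x$ term, the kernel factor as $(K^\delta_x)^{1/2}\cdot(K^\delta_x)^{(\gamma+2)/(2\gamma)}$; each error is then bounded by $\tfrac12\,T_{\varepsilon,\delta,k}$ plus
\[
C\int_{E'}|\nabla u|^{p-2}\,g_\varepsilon'(v_i)^{-1}\,|v_i|^{2(1-\beta)}\,\big(|\nabla\eta|^2\,|x-y|^{-\gamma}+|x-y|^{-\gamma-2}\big)\,dy,
\]
which is finite and uniform, again thanks to $\gamma<N-2$ and the uniform $C^1$ bound. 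The $\nabla\zeta_k$-error, supported in $\{\mathrm{dist}(\cdot,Z)\le1/k\}$ where $|\nabla u|^{p-2}$ and $|v_i|^{1-\beta}$ are uniformly small by the $\chi$-Hölder continuity of $\nabla u$ and $\nabla u\equiv0$ on $Z$, is dealt with exactly as in \cite{DS}, using $|Z|=0$, so that it disappears in the limit $k\to\infty$.

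Absorbing the $\tfrac12\,T_{\varepsilon,\delta,k}$-terms into the left-hand side yields $T_{\varepsilon,\delta,k}\le C_1$ with $C_1=C_1(N,p,q_0,\alpha,\beta,\gamma,\Omega,\mathrm{dist}(E,\partial\Omega))$, uniformly in $x\in\Omega$, $\varepsilon,\delta\in(0,1)$, $k\in\mathbb{N}$. Letting $\varepsilon\to0$ (Fatou, $g_\varepsilon'(v_i)\to(1-\beta)\,|v_i|^{-\beta}$), then $k\to\infty$ (monotone convergence, $\zeta_k\nearrow1$ on $\Omega\setminus Z\supseteq E\setminus Z_i$), then $\delta\to0$ (monotone convergence, $K^\delta_x\nearrow|x-y|^{-\gamma}$), and recalling $\eta\equiv1$ on $E$, we obtain \eqref{erik1}. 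Finally, taking $\gamma=0$ (permissible since $0<N-2$) gives $\int_{E\setminus Z_i}|\nabla u|^{p-2}|v_i|^{-\beta}|\nabla v_i|^2\,dy\le C_1$; since a.e.\ on $\Omega\setminus Z_i$
\[
\big|\nabla\big(|v_i|^{\frac{p-2-\beta}{2}}v_i\big)\big|^2=\frac{(p-\beta)^2}{4}\,|v_i|^{p-2-\beta}\,|\nabla v_i|^2\le\frac{(p-\beta)^2}{4}\,|\nabla u|^{p-2}\,|v_i|^{-\beta}\,|\nabla v_i|^2
\]
(using $|v_i|\le|\nabla u|$ and $p>2$), while the weak gradient of $|v_i|^{\frac{p-2-\beta}{2}}v_i$ vanishes a.e.\ on $Z_i$ (a standard chain-rule and approximation argument, in the spirit of Remark~\ref{oss:hessian}), we conclude $|u_{x_i}|^{\frac{p-2-\beta}{2}}u_{x_i}\in W^{1,2}(E)$.

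The two genuinely delicate points are the bookkeeping and the cut-off near $Z$. On the one hand, every constant used above — most notably the local $W^{1,2}$ bound for $|\nabla u|^{p-2}\nabla u$ borrowed from \cite{CM}, and the smallness of the integrals near $Z$ — must be checked to depend only on the listed data, not on $q$ individually; this is exactly what Theorem~\ref{teo:uniformi}, \eqref{lowerlambda} and Proposition~\ref{prop:Linfty} provide. On the other hand, the limit $k\to\infty$ removing the cut-off near the critical set $Z$ is the most technical step; it is carried out following \cite{DS} and genuinely uses $|Z|=0$ together with the (uniform) Hölder continuity of $\nabla u$.
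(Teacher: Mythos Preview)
Your overall strategy is sound and close in spirit to the paper, but there is a real gap in the step you yourself flag as ``the most technical'': the removal of the spatial cut-off $\zeta_k$ near the critical set $Z$. The error term coming from $\nabla\zeta_k$ is of the form
\[
E_{\zeta_k}\;\lesssim\;\int_{\{\mathrm{dist}(\cdot,Z)\le 1/k\}} |\nabla u|^{p-2}\,|\nabla v_i|\,|g_\varepsilon(v_i)|\,|\nabla\zeta_k|\,\eta^2\,K^\delta_x\,dy,
\]
and you have $|\nabla\zeta_k|\sim k$ while the only smallness at your disposal is $|\nabla u|\le L\,k^{-\chi}$ on this set. The remaining factor $|\nabla v_i|=|\nabla u_{x_i}|$ is \emph{not} controlled near $Z$: from \cite{CM} you only know $|\nabla u|^{p-2}|D^2u|\in L^2_{\rm loc}$, so after Cauchy--Schwarz you are left with a term of order $k^{1-\chi}\,|A_k|^{1/2}$ (or a variant thereof), and nothing forces $|A_k|$, the measure of the $1/k$-neighbourhood of $Z$, to decay fast enough to kill the growing factor $k^{1-\chi}$. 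The fact that $|Z|=0$ gives no quantitative rate. Moreover, your appeal to \cite{DS} is misplaced: Damascelli--Sciunzi do \emph{not} use a separate spatial cut-off near $Z$.

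The fix---and this is exactly what the paper does---is to \emph{merge} the regularisation of $s\mapsto s|s|^{-\beta}$ with the cut-off near $\{v_i=0\}$ into a single Lipschitz function of $v_i$. Concretely, replace $g_\varepsilon(v_i)\,\zeta_k$ by $G_\varepsilon(v_i)\,|v_i|^{-\beta}$, where $G_\varepsilon(s)=\mathrm{sign}(s)\max\{|s|-\varepsilon,0\}$. This composite is Lipschitz in $s$ and vanishes on $\{|v_i|<\varepsilon\}\supset Z$, so the test function is automatically compactly supported in $\Omega\setminus Z$ and no $\zeta_k$ is needed. Crucially, the ``cut-off derivative'' now contributes the factor $G_\varepsilon'(v_i)-\beta\,G_\varepsilon(v_i)/v_i\ge 0$ to the leading nonnegative term, rather than producing a stray error. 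After this modification the rest of your argument (Young's inequality to absorb the $\nabla\eta$ and $\nabla K^\delta_x$ errors, uniform bound on the right-hand side, Fatou in $\varepsilon$) goes through essentially as you wrote it, and the dependence of all constants on $q$ is indeed eliminated by Theorem~\ref{teo:uniformi}, Proposition~\ref{prop:Linfty} and \eqref{lowerlambda}, as you correctly noted.
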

\begin{proof} Without loss of generality, we prove the result for $\gamma\geq 0$. The heuristic idea is to test the linearized equation \eqref{eq:lineqweak} with $u_{x_i}\,|u_{x_i}|^{-\beta} \,|x-y|^{-\gamma}\,\phi^2$, where $\phi$ is a smooth cut-off function. In order to do this rigorously, we fix $x\in \Omega$ and for $0<\varepsilon<1$ use the test function
$$
\psi(y)=G_\varepsilon(u_{x_i}(y))\,|u_{x_i}(y)|^{-\beta}\,\big(|x-y|+\varepsilon\big)^{-\gamma}\,\phi(y)^2,
$$
where $\phi\in C_0^\infty(\Omega)$ is such that 
$$
\phi\equiv 1 \mbox{ on }E,\qquad 0\le \phi\le 1,\qquad\|\nabla \phi\|_{L^\infty(\Omega)}\le \frac{C}{\mathrm{dist}(E,\partial \Omega)},
$$
and the odd Lipschitz function $G_\varepsilon$ is given by
\[
G_\varepsilon(t)=\max\Big\{t-\varepsilon,0\Big\},\ \mbox{ for }t\ge 0,\qquad \mbox{ and }\qquad G_\varepsilon(t)=-G_\varepsilon(-t),\ \mbox{ for }t\le 0.
\]
The function $\psi$ is a product of a Lipschitz function of $u_{x_i}$, which vanishes in a neighborhood of the critical set $Z$, and a smooth function with compact support in $\Omega$. 
Therefore $\psi$ is an admissible   test function for \eqref{eq:lineqweak}. This gives
\[
\begin{split}
&\int_\Omega \frac{|\nabla u|^{p-2}\,|\nabla u_{x_i}|^2\, |u_{x_i}|^{-\beta}}{\big(|x-y|+\varepsilon\big)^{\gamma}}\left[G_\varepsilon'(u_{x_i})-\beta\, \frac{G_\varepsilon(u_{x_i})}{u_{x_i}}\right]\,\phi^2\,dy \\
&+(p-2)\int_\Omega \frac{|\nabla u|^{p-4}\,\big(\langle \nabla u,\nabla u_{x_i}\rangle\big)^2\, |u_{x_i}|^{-\beta}}{\big(|x-y|+\varepsilon\big)^{\gamma}}\,\left[G_\varepsilon'(u_{x_i})-\beta \frac{G_\varepsilon(u_{x_i})}{u_{x_i}}\right]\phi^2\,dy \\
&+2\,\int_\Omega \frac{|\nabla u|^{p-2}\,\langle\nabla u_{x_i}, \nabla \phi\rangle \, |u_{x_i}|^{-\beta}}{\big(|x-y|+\varepsilon\big)^\gamma}\,\phi\,G_\varepsilon(u_{x_i})\,dy\\
& +2\,(p-2)\int_\Omega \frac{|\nabla u|^{p-4}\,\langle \nabla u,\nabla u_{x_i}\rangle\, \langle \nabla u, \nabla \phi \rangle\,|u_{x_i}|^{-\beta}}{\big(|x-y|+\varepsilon\big)^\gamma}\,\phi \,G_\varepsilon(u_{x_i})\,dy\\
&+\int_\Omega |\nabla u|^{p-2}\,\left\langle \nabla u_{x_i}, \nabla \left((|x-y|+\varepsilon)^{-\gamma}\right)\right\rangle   \,G_\varepsilon(u_{x_i})|u_{x_i}|^{-\beta}\phi^2\,dy\\
& +(p-2)\,\int_\Omega |\nabla u|^{p-4}\,\langle \nabla u, \nabla u_{x_i}\rangle\,\left \langle \nabla u,\nabla \left(\big(|x-y|+\varepsilon\big)^{-\gamma}\right)\right\rangle\, G_\varepsilon(u_{x_i})\,|u_{x_i}|^{-\beta}\,\phi^2\,dy\\
&=(q-1)\,\lambda_{p,q}\,\int_\Omega \frac{u^{q-2}\,G_\varepsilon(u_{x_i})\,|u_{x_i}|^{-\beta}}{\big(|x-y|+\varepsilon\big)^\gamma}\,\phi^2\,dy.
\end{split}
\]
Note that\footnote{Observe that the function is even, thus it is sufficient to check the inequality for $t\ge 0$.}
\[
G_\varepsilon'(t)-\beta\, \frac{G_\varepsilon(t)}{t}\geq 0,\qquad \mbox{ for every } |t|\not=\varepsilon,
\]
therefore, by dropping the second term on the left-hand side and using the Cauchy-Schwarz inequality, we get
\begin{equation}
\label{esta}
\begin{split}
\int_\Omega \frac{|\nabla u|^{p-2}|\nabla u_{x_i}|^{2}| u_{x_i}|^{-\beta}}{\big(|x-y\big|+\varepsilon\big)^\gamma}\, &\left[G_\varepsilon'(u_{x_i})-\beta \frac{G_\varepsilon(u_{x_i})}{u_{x_i}}\right]\,\phi^2\,dy\\
&\leq 2\,(p-1)\,\int_\Omega \frac{|\nabla u|^{p-2}\,|\nabla u_{x_i}|\,|u_{x_i}|^{-\beta}}{\big(|x-y|+\varepsilon\big)^\gamma}\,  |G_\varepsilon(u_{x_i})|\,\phi\,|\nabla \phi|\,dy\\
&+ \gamma\,(p-1)\,\int_\Omega \frac{|\nabla u|^{p-2}\,|\nabla u_{x_i}|\,|u_{x_i}|^{-\beta}}{\big(|x-y|+\varepsilon\big)^{\gamma+1}}\, |G_\varepsilon(u_{x_i})|\,\phi^2\,dy\\
&+(q-1)\, \lambda_{p,q}\,\int_\Omega \frac{u^{q-2}\,|G_\varepsilon(u_{x_i})|\,|u_{x_i}|^{-\beta}}{\big(|x-y|+\varepsilon\big)^\gamma}\,\phi^2\,dy=:I_1+I_2+I_3.
\end{split}
\end{equation}
By using Proposition \ref{prop:Linfty}, Theorem \ref{teo:uniformi}, the properties of the cut-off function $\phi$ and recalling  that $\lambda_{p,q}\leq \Lambda_0=\Lambda_0(N,p,q_0,\Omega)$ (see the proof of Theorem \ref{teo:uniformi}), the third term can be estimated as
\[
\begin{split}
I_3\leq C\, \int_\Omega\frac{1}{|x-y|^\gamma}\,dy,
\end{split}
\]
for a constant depending $C$ on $N,p,q_0,\alpha,\beta$ and $\Omega$, only. In turn, the last integral is easily estimated as follows
\begin{equation}
\label{kernel}
\begin{split}
\int_\Omega\frac{1}{|x-y|^\gamma}\,dy&\le \int_{\{y\in\mathbb{R}^N\, :\, |y-x|\le \mathrm{diam}(\Omega)\}} \frac{1}{|y-x|^\gamma}\,dy\\
&=N\,\omega_N\, \int_0^{\mathrm{diam}(\Omega)} \varrho^{N-1-\gamma}\,d\varrho=\frac{N\,\omega_N}{(N-\gamma)}\, \Big(\mathrm{diam}(\Omega)\Big)^{N-\gamma}.
\end{split}
\end{equation}
In the last integral we used that $\gamma<N$, thanks to the stronger assumption $\gamma<N-2$. 
\par
As for the terms $I_1$ and $I_2$, we first observe that by using 
\begin{equation}
\label{kernelg}
(|x-y|+\varepsilon)^\gamma\ge \frac{(|x-y|+\varepsilon)^{\gamma+1}}{\mathrm{diam}(\Omega)+1}, \qquad \mbox{ for every } x,y\in\Omega,
\end{equation}
we have
\[
I_1+I_2\le C\,\int_\Omega \frac{|\nabla u|^{p-2}\,|\nabla u_{x_i}|\,|u_{x_i}|^{-\beta}}{\big(|x-y|+\varepsilon\big)^{\gamma+1}}\, |G_\varepsilon(u_{x_i})|\,\phi\,[\phi+|\nabla \phi|]\,dy,
\]
for some $C=C(p,\gamma,\mathrm{diam}(\Omega))>0$. Thus, from \eqref{esta}, we have obtained
\begin{equation}
\label{esta2}
\begin{split}
\int_\Omega \frac{|\nabla u|^{p-2}\,|\nabla u_{x_i}|^{2}\,| u_{x_i}|^{-\beta}}{\big(|x-y\big|+\varepsilon\big)^\gamma}\, &\left[G_\varepsilon'(u_{x_i})-\beta \,\frac{G_\varepsilon(u_{x_i})}{u_{x_i}}\right]\,\phi^2\,dy\\
&\le C+C\,\int_\Omega \frac{|\nabla u|^{p-2}\,|\nabla u_{x_i}|\,|u_{x_i}|^{-\beta}}{\big(|x-y|+\varepsilon\big)^{\gamma+1}}\, |G_\varepsilon(u_{x_i})|\,\phi\,[\phi+|\nabla \phi|]\,dy.
\end{split}
\end{equation}
In order to estimate the last integral, we use Young's inequality. For every $\delta>0$, we have 
\begin{equation}
\label{estb}
\begin{split}
C\,\int_\Omega \frac{|\nabla u|^{p-2}\,|\nabla u_{x_i}|\,|u_{x_i}|^{-\beta}}{\big(|x-y|+\varepsilon\big)^{\gamma+1}}\, &|G_\varepsilon(u_{x_i})|\,\phi\,[\phi+|\nabla \phi|]\,dy\\
&\leq \frac{\delta}{2}\, \int_\Omega \frac{|\nabla u|^{{p-2}}\,|\nabla u_{x_i}|^2\,|u_{x_i}|^{-\beta}}{\big(|x-y|+\varepsilon\big)^\gamma}\,\frac{|G_\varepsilon(u_{x_i})|}{|u_{x_i}|}\, \phi^2\,dy\\
&+ \frac{C^2}{2\,\delta}\, \int_\Omega \frac{|\nabla u|^{p-2}\,|u_{x_i}|^{1-\beta}}{\big(|x-y|+\varepsilon\big)^{\gamma+2}}\,|G_\varepsilon(u_{x_i})| \, [\phi+|\nabla \phi|]^2\,dy.
\end{split}
\end{equation}
We make the choice $\delta=1-\beta$, which is feasible since $\beta<1$. Then observe that
\[
(1-\beta)\,\frac{|G_\varepsilon(t)|}{|t|}\leq  G_\varepsilon'(t)-\beta \frac{G_\varepsilon(t)}{t},\qquad \mbox{ for every } |t|\not=\varepsilon.
\]
Then the first term in the right-hand side of \eqref{estb} can now be estimated by
$$
\frac12 \int_\Omega \frac{|\nabla u|^{p-2}\,|\nabla u_{x_i}|^2\,|u_{x_i}|^{-\beta}}{\big(|x-y|+\varepsilon\big)^\gamma}\,\left[G_\varepsilon'(u_{x_i})-\beta\, \frac{G_\varepsilon(u_{x_i})}{u_{x_i}}\right]\,\phi^2\,dy,
$$
which can be absorbed into the right-hand side of \eqref{esta2}. Thus, up to now, we obtained 
\begin{equation}
\label{esta3}
\begin{split}
\int_\Omega \frac{|\nabla u|^{p-2}\,|\nabla u_{x_i}|^{2}\,| u_{x_i}|^{-\beta}}{\big(|x-y\big|+\varepsilon\big)^\gamma}\, &\left[G_\varepsilon'(u_{x_i})-\beta \frac{G_\varepsilon(u_{x_i})}{u_{x_i}}\right]\,\phi^2\,dy\\
&\le C+C\,\int_\Omega \frac{|\nabla u|^{p-2}\,|u_{x_i}|^{1-\beta}}{\big(|x-y|+\varepsilon\big)^{\gamma+2}}\,|G_\varepsilon(u_{x_i})| \, [\phi+|\nabla \phi|]^2\,dy,
\end{split}
\end{equation}
possibly for a different constant $C$, independent of $x\in\Omega,\varepsilon\in(0,1)$ and $q\in[p,q_0]$.
Using that $|G_\varepsilon(t)|\leq |t|$, together with the Lipschitz bound of Theorem \ref{teo:uniformi} and the properties of $\phi$, the last integral of \eqref{esta3} can be estimated by
\[
\int_\Omega\frac{|\nabla u|^{p-2}\,|u_{x_i}|^{2-\beta}}{\big(|x-y|+\varepsilon\big)^{\gamma+2}}\, [\phi+|\nabla \phi|]^2\,dy\leq C\,\int_\Omega \frac{1}{|x-y|^{\gamma+2}}\,dy,
\]
and the last integral is uniformly (in $x\in\Omega$) bounded by a constant depending only on $N$ and $\mathrm{diam}(\Omega)$, exactly as in \eqref{kernel} (here we crucially use that $\gamma<N-2$).
\par
From \eqref{esta3} and using that $\phi\equiv 1$ on $E$, we thus finally obtain
\begin{equation}
\label{erikfinal}
\int_E \frac{|\nabla u|^{p-2}\,| u_{x_i}|^{-\beta}\,|\nabla u_{x_i}|^{2}}{\big(|x-y\big|+\varepsilon\big)^\gamma}\,\left[G_\varepsilon'(u_{x_i})-\beta \frac{G_\varepsilon(u_{x_i})}{u_{x_i}}\right]\,dy\le C,
\end{equation}
for some $C=C(N,p,q_0,\gamma,\alpha,\beta,\Omega,\mathrm{dist}(E,\partial\Omega))>0$. If we introduce the function
\[
F_\varepsilon(t)=\int_0^t |\tau|^\frac{p-2-\beta}{2}\,\sqrt{G_\varepsilon'(\tau)-\beta \frac{G_\varepsilon(\tau)}{\tau}}\,d\tau,
\]
from \eqref{erikfinal} we can infer in particular that
\[
\int_E |\nabla F_\varepsilon(u_{x_i})|^2\,dx\le C.
\]
Observe that we also used that $|\nabla u|^{p-2}\ge |u_{x_i}|^{p-2}$.
Moreover, by construction of $F_\varepsilon$, it is not difficult to see that 
\[
\int_E |F_\varepsilon(u_{x_i})|^2\,dx\le C_\beta\, \int_E |u_{x_i}|^{p-\beta}.
\]
This shows that, up to extracting an infinitesimal sequence $\{\varepsilon_n\}_{n\in\mathbb{N}}\subset (0,1)$, we have that $F_\varepsilon(u_{x_i})$ converges weakly in $W^{1,2}(E)$ to a function $F\in W^{1,2}(E)$. By using that 
\begin{equation}
\label{limitG}
\lim_{\varepsilon\to 0} \left[G_\varepsilon'(\tau)-\beta \frac{G_\varepsilon(\tau)}{\tau}\right]=1-\beta,\qquad \mbox{ for } \tau\not =0,
\end{equation}
we get that
\[
\lim_{\varepsilon\to 0}F_\varepsilon(t)=(1-\beta)\,\int_0^t |\tau|^\frac{p-2-\beta}{2}\,d\tau=\frac{2\,(1-\beta)}{p-\beta}\,|t|^\frac{p-2-\beta}{2}\,t,
\]
and thus
\[
\lim_{\varepsilon\to 0} F_\varepsilon(u_{x_i})=\frac{2\,(1-\beta)}{p-\beta}\,|u_{x_i}|^\frac{p-2-\beta}{2}\,u_{x_i},\qquad \mbox{ a.\,e. in }E.
\]
This permits to identify the limit function $F$, thanks to \cite[Lemme 4.8]{ka}, which is then given by
\[
\frac{2\,(1-\beta)}{p-\beta}\,|u_{x_i}|^\frac{p-2-\beta}{2}\,u_{x_i}=F\in W^{1,2}(E).
\]  
This already shows the first part of the statement. 
\par
Finally, by using Fatou's Lemma together with \eqref{limitG}, 
we may take the limit as $\varepsilon$ goes to $0$ in \eqref{erikfinal} and obtain 
$$
\int_{E\setminus Z_i} \frac{|\nabla u|^{p-2}\,| u_{x_i}|^{-\beta}\,|\nabla u_{x_i}|^2}{|x-y|^\gamma}\,dy\leq C.
$$
The previous bound holds uniformly with respect to $x\in\Omega$, thus we get the desired conclusion.
\end{proof}
We then have the following immediate consequence of Proposition \ref{prop:firstsingest}.
\begin{coro}
\label{cor:dg2}
Under the assumptions of Proposition \ref{prop:firstsingest}, for every $\beta\in(-\infty,1)$ we have 
\[
\sup_{x\in\Omega}\int_{E} \frac{|\nabla u(y)|^{p-2-\beta }\, |D^2 u(y)|^2}{|x-y|^\gamma}\,dy\leq  C_2,
\]
for some $C_2=C_2(N,p,q_0,\alpha,\beta,\gamma,\Omega,\mathrm{dist}(E,\partial \Omega))>0$.
\end{coro}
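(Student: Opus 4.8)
The plan is to reduce everything to Proposition \ref{prop:firstsingest} via two elementary observations: the pointwise identity $|D^2 u|^2=\sum_{i=1}^N|\nabla u_{x_i}|^2$ (valid almost everywhere in $\Omega$, with the convention of Remark \ref{oss:hessian} that $D^2 u=0$ on $Z$), and the comparison between the weight $|\nabla u|^{p-2-\beta}$ and the weights $|\nabla u|^{p-2}\,|u_{x_i}|^{-\beta}$ appearing in \eqref{erik1}.

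First I would dispose of the critical set. Since $D^2 u=0$ on $Z$ by convention and $|Z|=0$ (by \eqref{DS} from Theorem \ref{teo:sweden}), the integral over $E$ coincides with the one over $E\setminus Z$, where $u\in C^2$ and $|\nabla u|>0$. Using $|D^2 u|^2=\sum_i|\nabla u_{x_i}|^2$, it suffices to bound, uniformly in $x\in\Omega$ and $q\in[p,q_0]$, each term $\int_{E\setminus Z}|\nabla u|^{p-2-\beta}\,|\nabla u_{x_i}|^2\,|x-y|^{-\gamma}\,dy$. Writing $E\setminus Z=(E\setminus Z_i)\cup\big((E\cap Z_i)\setminus Z\big)$ and recalling that $u_{x_i}\in C^1(\Omega\setminus Z)$, the standard fact that the gradient of a Sobolev function vanishes almost everywhere on its zero set gives $\nabla u_{x_i}=0$ a.e. on $Z_i\setminus Z$; hence the contribution of $(E\cap Z_i)\setminus Z$ vanishes, and we are reduced to estimating $\int_{E\setminus Z_i}|\nabla u|^{p-2-\beta}\,|\nabla u_{x_i}|^2\,|x-y|^{-\gamma}\,dy$.

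On $E\setminus Z_i$ one has $0<|u_{x_i}|\le|\nabla u|\le L$, the last bound coming from the uniform $C^{1,\chi}$ estimate of Theorem \ref{teo:uniformi}. I would then distinguish two cases. If $0\le\beta<1$, then $|\nabla u|^{-\beta}\le|u_{x_i}|^{-\beta}$, so $|\nabla u|^{p-2-\beta}\le|\nabla u|^{p-2}\,|u_{x_i}|^{-\beta}$ and Proposition \ref{prop:firstsingest} applies with this very $\beta$. If $\beta<0$, then $|\nabla u|^{-\beta}\le L^{-\beta}$, so $|\nabla u|^{p-2-\beta}\le L^{-\beta}\,|\nabla u|^{p-2}$, and we invoke Proposition \ref{prop:firstsingest} with $\beta$ replaced by $0$. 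In both cases the resulting bound is uniform in $x\in\Omega$ and $q\in[p,q_0]$; summing over $i=1,\dots,N$ and collecting the constants (which only involve $N,p,q_0,\alpha,\beta,\gamma,\Omega$ and $\mathrm{dist}(E,\partial\Omega)$, since $L$ depends only on $\alpha,N,p,\Omega,q_0$) gives the claim with $C_2=N\,\max\{1,L^{-\beta}\}\,C_1$.

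There is essentially no serious obstacle here; the only points requiring a little care are the treatment of negative $\beta$ — which is precisely where the global Lipschitz bound of Theorem \ref{teo:uniformi} enters — and the justification that one may discard the sets $Z$ and $Z_i$, which rests on $|Z|=0$ and on the vanishing of $\nabla u_{x_i}$ on $Z_i\setminus Z$.
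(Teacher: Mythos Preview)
Your proof is correct and follows essentially the same route as the paper: for $0\le\beta<1$ you use the pointwise inequality $|\nabla u|^{-\beta}\le |u_{x_i}|^{-\beta}$ to reduce to \eqref{erik1}, for $\beta<0$ you reduce to $\beta=0$ via the uniform Lipschitz bound of Theorem \ref{teo:uniformi}, and you handle the sets $Z$ and $Z_i$ exactly as the paper does (via $|Z|=0$ and the vanishing of $\nabla u_{x_i}$ a.e.\ on $Z_i\setminus Z$, invoking that $u_{x_i}\in C^1(\Omega\setminus Z)$). The only cosmetic difference is the order of operations---you first discard $Z$ and then split off $Z_i$, whereas the paper first establishes the bound on $E\setminus Z_i$ and then sums---but the content is identical.
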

\begin{proof}
We first suppose that $0\le \beta<1$. From \eqref{erik1}, by using that $|u_{x_i}|^{-\beta}\ge |\nabla u|^{-\beta}$, we immediately get
\[
\sup_{x\in\Omega}\int_{E\setminus Z_i} \frac{|\nabla u(y)|^{p-2-\beta}\,|\nabla u_{x_i}(y)|^2}{|x-y|^\gamma}\,dy\leq C_1, \qquad \mbox{ for } i=1,\dots,N.
\]
We then observe that $u_{x_i}\in C^1(\Omega\setminus Z)$ and thus it belongs to $W^{1,1}_{\rm loc}(\Omega\setminus Z)$. By appealing to \cite[Theorem 6.19]{LL}, we have 
\[
\nabla u_{x_i}=0,\qquad \mbox{ a.\,e. in } Z_i\cap (\Omega\setminus Z).
\]
By using this fact and summing over $i=1,\dots,N$, we get the claimed inequality, by recalling that $|Z|=0$, see Remark \ref{oss:hessian}.
\par
The case $\beta< 0$ can now be reduced to the case $\beta=0$: it is sufficient to use that $\|\nabla u\|_{L^\infty(\Omega)}\le L<+\infty$ by Theorem \ref{teo:uniformi}, thus we get 
\[
\sup_{x\in\Omega}\int_{E} \frac{|\nabla u(y)|^{p-2-\beta }\, |D^2 u(y)|^2}{|x-y|^\gamma}\,dy\le L^{-\beta}\,\sup_{x\in\Omega}\int_{E} \frac{|\nabla u(y)|^{p-2}\, |D^2 u(y)|^2}{|x-y|^\gamma}\,dy.
\]
This concludes the proof.
\end{proof}
We also need the following a priori estimate, which corresponds to \cite[Theorem 2.3]{DS}.
\begin{prop} 
\label{prop:singest} 
Under the assumptions of Proposition \ref{prop:firstsingest}, for every $K\Subset E\Subset \Omega$ and every $b<1$ we have
\[
\sup_{x\in\Omega}\int_{K} \frac{1}{|\nabla u(y)|^{(p-1)\,b}|x-y|^\gamma}\,dy\leq C_3\,\left(\inf_E u\right)^{1-q}\,\left(1+\left(\inf_E u\right)^{1-q}\right),
\]
where $C_3=C_3(N,p,q_0,\alpha,b,\gamma,\Omega,\mathrm{dist}(K,\partial E),\mathrm{dist}(E,\partial\Omega))>0$.
\end{prop}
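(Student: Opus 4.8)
The plan is to deduce the estimate from Corollary \ref{cor:dg2} by a Caccioppoli--type test function argument that exploits the sign of the right--hand side of \eqref{eq:eq}. First I would dispose of the trivial cases. If $b\le 0$, then $(p-1)\,b\le 0$ and $|\nabla u|^{-(p-1)b}\le \|\nabla u\|_{L^\infty(\Omega)}^{(p-1)|b|}$ is bounded by Theorem \ref{teo:uniformi}, while $\int_K |x-y|^{-\gamma}\,dy$ is uniformly bounded since $\gamma<N$; as $\inf_E u\le \|u\|_{L^\infty(\Omega)}\le C$ forces $(\inf_E u)^{1-q}$ to stay bounded away from $0$, the claim follows. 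Likewise, if $\gamma\le 0$ the kernel is bounded on $\Omega$ and one reduces to $\gamma=0$ (so $N=2$ is no different). Hence I may assume $0<b<1$ and $0\le\gamma<N-2$. Now fix $x\in\Omega$, choose $\phi\in C^\infty_0(E)$ with $\phi\equiv 1$ on $K$, $0\le\phi\le1$, $\|\nabla\phi\|_{L^\infty(\Omega)}\le C/\mathrm{dist}(K,\partial E)$, and for $0<\varepsilon<1$ insert into the weak formulation of \eqref{eq:eq} the test function
\[
\Psi_\varepsilon(y)=\frac{\phi(y)^2}{\big(|\nabla u(y)|^{p-1}+\varepsilon\big)^{b}\,\big(|x-y|+\varepsilon\big)^\gamma}.
\]
This is admissible: by \eqref{CM} one has $|\nabla u|^{p-1}\in W^{1,2}_{\rm loc}(\Omega)$, the map $t\mapsto(t+\varepsilon)^{-b}$ is Lipschitz on $[0,\infty)$, so $\Psi_\varepsilon\in W^{1,2}(\Omega)$ with compact support in $\Omega$ (cf.\ Remark \ref{oss:hessian}), and the weak formulation extends to such $\Psi_\varepsilon$ by mollification, using $|\nabla u|^{p-1}\in L^\infty(\Omega)$.

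Since $\Psi_\varepsilon\ge0$ is supported in $E$ where $u\ge\inf_E u$, the term $\lambda_{p,q}\int_\Omega u^{q-1}\Psi_\varepsilon$ is bounded below by $\lambda_{p,q}\,(\inf_E u)^{q-1}\,J_\varepsilon(x)$, with
\[
J_\varepsilon(x):=\int_E \frac{\phi^2\,dy}{\big(|\nabla u|^{p-1}+\varepsilon\big)^{b}\,\big(|x-y|+\varepsilon\big)^\gamma}.
\]
On the other side of the equation, $\int_\Omega |\nabla u|^{p-2}\langle\nabla u,\nabla\Psi_\varepsilon\rangle\,dy$ splits, according to whether the derivative falls on $\phi^2$, on $(|\nabla u|^{p-1}+\varepsilon)^{-b}$, or on $(|x-y|+\varepsilon)^{-\gamma}$, into three terms $R_1+R_2+R_3$. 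The terms $R_1$ and $R_3$ are harmless: using $b<1$ and Theorem \ref{teo:uniformi} one has $(|\nabla u|^{p-1}+\varepsilon)^{-b}|\nabla u|^{p-1}\le (\|\nabla u\|_{L^\infty(\Omega)}^{p-1}+1)^{1-b}$, whence $|R_1|\le C\,\|\nabla\phi\|_{L^\infty(\Omega)}\int_E|x-y|^{-\gamma}\,dy$ and $|R_3|\le C\int_E|x-y|^{-\gamma-1}\,dy$, both finite and uniform in $x$ since $\gamma<N-2$ (here one argues exactly as in \eqref{kernel}).

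The decisive term is $R_2$. Writing $\nabla(|\nabla u|^{p-1})=(p-1)\,|\nabla u|^{p-3}\,D^2u\,\nabla u$ a.e.\ in $\Omega$ (cf.\ Remark \ref{oss:hessian}), one gets
\[
|R_2|\le b\,(p-1)\int_E \frac{\phi^2\,|\nabla u|^{2p-3}\,|D^2u|}{\big(|\nabla u|^{p-1}+\varepsilon\big)^{b+1}\,\big(|x-y|+\varepsilon\big)^{\gamma}}\,dy.
\]
I would split $|\nabla u|^{2p-3}=|\nabla u|^{\frac{p-2-\beta}{2}}\cdot|\nabla u|^{2p-3-\frac{p-2-\beta}{2}}$ with
\[
\beta=(p-1)\,b-(p-2),
\]
which satisfies $\beta<1$ precisely because $b<1$, and apply the Cauchy--Schwarz inequality. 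The first resulting factor equals $\big(\int_E\phi^2\,|\nabla u|^{p-2-\beta}\,|D^2u|^2\,(|x-y|+\varepsilon)^{-\gamma}\,dy\big)^{1/2}\le\sqrt{C_2}$ by Corollary \ref{cor:dg2} (applicable since $\beta<1$ and $\gamma<N-2$), uniformly in $x$ and $\varepsilon$. For the second factor, the algebraic identity $3p-4+\beta=(p-1)(b+2)$ gives $|\nabla u|^{2(2p-3)-(p-2-\beta)}=|\nabla u|^{(p-1)(b+2)}\le(|\nabla u|^{p-1}+\varepsilon)^{b+2}$, so that the second factor is bounded by $\sqrt{J_\varepsilon(x)}$. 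Altogether $|R_2|\le b\,(p-1)\,\sqrt{C_2\,J_\varepsilon(x)}$.

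Combining everything, $\lambda_{p,q}\,(\inf_E u)^{q-1}\,J_\varepsilon(x)\le C + b\,(p-1)\,\sqrt{C_2\,J_\varepsilon(x)}$; solving this quadratic inequality in $\sqrt{J_\varepsilon(x)}$ and using $\lambda_0\le\lambda_{p,q}\le\Lambda_0$ from \eqref{lowerlambda} produces
\[
J_\varepsilon(x)\le C_3\,(\inf_E u)^{1-q}\big(1+(\inf_E u)^{1-q}\big),
\]
with $C_3$ of the stated dependence and, crucially, independent of $x\in\Omega$ and $\varepsilon\in(0,1)$. Letting $\varepsilon\to0$ and using Fatou's lemma together with $\phi\equiv1$ on $K$ yields the claim after taking the supremum over $x$. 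The main obstacle is the simultaneous calibration of the splitting exponent $\beta$: it must be $<1$ for Corollary \ref{cor:dg2} to apply, and at the same time must make the companion factor in Cauchy--Schwarz reproduce \emph{exactly} the quantity $J_\varepsilon(x)$ being estimated, so that the quadratic absorption closes; both constraints pin $\beta$ down to $(p-1)b-(p-2)$ and are compatible precisely under the hypothesis $b<1$. The remaining subtlety, the admissibility of the singular test function, is dealt with by the regularization in $\varepsilon$ and the $W^{1,2}_{\rm loc}$ regularity of $|\nabla u|^{p-1}$.
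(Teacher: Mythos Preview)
Your proof is correct and follows essentially the same route as the paper's: test \eqref{eq:eq} with a regularized version of $(|\nabla u|^{p-1})^{-b}\,|x-y|^{-\gamma}$ times a cut-off, isolate the favourable sign of the right-hand side on $E$, control the derivative-of-kernel and derivative-of-cut-off terms by \eqref{kernel}, and close the estimate on the Hessian term via Corollary~\ref{cor:dg2} with exactly the same exponent $\beta=(p-1)b-(p-2)$. The only cosmetic difference is that you use Cauchy--Schwarz and solve the resulting quadratic in $\sqrt{J_\varepsilon}$, whereas the paper applies Young's inequality with weight $(\inf_E u)^{q-1}/2$ and absorbs directly; these are two phrasings of the same absorption step.
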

\begin{proof} Without loss of generality, we prove the result for $\gamma\geq 0$. The heuristic idea is to test equation \eqref{eq:eq} with $|\nabla u|^{-(p-1)\,b}\,|x-y|^{-\gamma}\phi$, where $\phi\in C_0^\infty(E)$ is a cut-off function such that 
\[
\phi \equiv 1 \mbox { in } K,\qquad 0\le \phi\le 1,\qquad \|\nabla \phi\|_{L^\infty(E)}\le \frac{C}{\mathrm{dist}(K,\partial E)}.
\]
To make this precise, we fix $x\in \Omega$ and use for every $\varepsilon>0$ the test function 
\begin{equation}
\label{testerik}
\psi(y)= (|\nabla u(y)|^{p-1}+\varepsilon)^{-b}\,\big(|x-y|+\varepsilon\big)^{-\gamma}\,\phi(y).
\end{equation}
This is a product of a Lipschitz function of $|\nabla u|^{p-1}$ and a smooth function with compact support in $\Omega$. In light of \eqref{CM}, we have that this function belongs to $W^{1,2}_0(\Omega)$. If we now use that $u\in L^\infty(\Omega)$ and $\nabla u\in L^\infty(\Omega)$, we see that in the weak formulation of \eqref{eq:eq}
we can in particular admit test functions $\psi\in W^{1,2}_0(\Omega)$.
Therefore the test function in \eqref{testerik} is feasible. 
\par
This gives
\[
\begin{split}
\lambda_{p,q}\,\int_\Omega \frac{u^{q-1}}{(|\nabla u|^{p-1}+\varepsilon)^b}&\,\frac{\phi}{\big(|x-y|+\varepsilon\big)^\gamma}\,  dy\\
&=\int_\Omega \frac{\langle|\nabla u|^{p-2}\,\nabla u, \nabla \phi \rangle}{(|\nabla u|^{p-1}+\varepsilon)^b}\,\frac{1}{\big(|x-y|+\varepsilon\big)^\gamma}\,dy\\
&-b\,(p-1)\,\int_\Omega \frac{|\nabla u|^{2\,p-5}}{(|\nabla u|^{p-1}+\varepsilon)^{b+1}}\,\langle D^2 u\,\nabla u,\nabla u \rangle\, \frac{\phi}{\big(|x-y|+\varepsilon\big)^\gamma}\,dy\\
&+\int_\Omega \frac{|\nabla u|^{p-2}}{(|\nabla u|^{p-1}+\varepsilon)^b}\,\langle \nabla u,\nabla \big(|x-y|+\varepsilon\big)^{-\gamma}\rangle\, \phi\, dy.
\end{split}
\]
Observe that we used Remark \ref{oss:hessian}, to compute the gradient of $(|\nabla u(y)|^{p-1}+\varepsilon)^{-b}$.
Using that 
\[
u\ge \inf_E u>0,
\]
and the lower bound on $\lambda_{p,q}$ given by \eqref{lowerlambda}, we obtain 
\begin{equation}
\label{tomte}
\begin{split}
\left(\inf_{E} u\right)^{q-1}\,\int_\Omega \frac{\phi}{(|\nabla u|^{p-1}+\varepsilon)^b}\, \frac{1}{\big(|x-y|+\varepsilon\big)^\gamma}\, dy&\leq C\,\int_\Omega \frac{|\nabla u|^{p-1}\,|\nabla \phi| }{(|\nabla u|^{p-1}+\varepsilon)^b}\, \frac{dy}{\big(|x-y|+\varepsilon\big)^\gamma}\\
&+ C\,b\,\int_\Omega \frac{|\nabla u|^{2\,p-5}\,\phi}{(|\nabla u|^{p-1}+\varepsilon)^{b+1}}\,\frac{\langle D^2 u\,\nabla u,\nabla u\rangle}{\big(|x-y|+\varepsilon\big)^\gamma}\,dy\\
&+ C\,\gamma\,\int_\Omega \frac{|\nabla u|^{p-1}\,\phi}{(|\nabla u|^{p-1}+\varepsilon)^b}\,\frac{dy}{\big(|x-y|+\varepsilon\big)^{\gamma+1}}\\
&=:J_1+J_2+J_3,
\end{split}
\end{equation}
where $C=C(N,p,q_0,\Omega)>0 $. We observe that, by using \eqref{kernelg}, we have
\[
J_1+J_3\leq C\,\int_\Omega \frac{|\nabla u|^{p-1}}{(|\nabla u|^{p-1}+\varepsilon)^b}\, \frac{\phi+|\nabla \phi|}{\big(|x-y|+\varepsilon\big)^{\gamma+1}}\,dy,
\]
for a constant $C$ depending on $N,p,q_0, \Omega$ and $\gamma$. We can then go on by observing that $|\nabla u|^{p-1}\le |\nabla u|^{p-1}+\varepsilon$ and $b<1$, using the Lipschitz estimate of Theorem \ref{teo:uniformi} and the properties of $\phi$. This gives
\[
J_1+J_3\leq C\,\int_\Omega \frac{1}{\big(|x-y|+\varepsilon\big)^{\gamma+1}}\,dy,
\]
for a constant $C=C(N,p,q_0,\alpha,b,\gamma,\Omega, \mathrm{dist}(K,\partial E))>0$.
Then we can estimate the last integral as in \eqref{kernel}.
For $J_2$ we have
\[
\begin{split}
J_2&\leq C\,\int_\Omega \frac{|\nabla u|^{2\,p-3}}{(|\nabla u|^{p-1}+\varepsilon)^{b+1}}\,\frac{|D^2 u|}{\big(|x-y|+\varepsilon\big)^\gamma}\,\phi \,dy.
\end{split}
\]
In the last integral above, we use Young's inequality as follows 
\[
\begin{split}
C\,\int_\Omega \frac{|\nabla u|^{2\,p-3}}{(|\nabla u|^{p-1}+\varepsilon)^{b+1}}&\,\frac{|D^2 u|}{\big(|x-y|+\varepsilon\big)^\gamma}\,\phi \,dy \\
&\leq \frac{\left(\inf\limits_E u\right)^{q-1}}{2}\,\int_\Omega \frac{1}{(|\nabla u|^{p-1}+\varepsilon)^b}\, \frac{\phi}{\big(|x-y|+\varepsilon\big)^\gamma}\,dy\\
&  +\frac{C^2}{2}\,\left(\inf\limits_E u\right)^{1-q}\int_\Omega \frac{|\nabla u|^{4\,p-6}}{(|\nabla u|^{p-1}+\varepsilon)^{b+2}}\,\frac{|D^2 u|^2\,\phi}{\big(|x-y|+\varepsilon\big)^\gamma}\,dy\\
&\leq \frac{(\inf_E u)^{q-1}}{2}\,\int_\Omega \frac{1}{(|\nabla u|^{p-1}+\varepsilon)^b}\, \frac{\phi}{\big(|x-y|+\varepsilon\big)^\gamma}\,dy\\
&  +\frac{C^2}{2}\,\left(\inf\limits_E u\right)^{1-q}\,\int_\Omega |\nabla u|^{(2-b)\,(p-1)-2}\,\frac{|D^2 u|^2\,\phi}{\big(|x-y|+\varepsilon\big)^\gamma}\,dy.
\end{split}
\]
Note that we have 
\[
(2-b)\,(p-1)-2>p-3,
\]
and thus the last integral is finite and uniformly bounded, thanks to Corollary \ref{cor:dg2}: it is sufficient to choose in the latter
\[
\beta=p-(2-b)\,(p-1),
\]
which is feasible. We then obtain from \eqref{tomte}
\[
\begin{split}
\left(\inf_{E} u\right)^{q-1}\,\int_\Omega \frac{\phi}{(|\nabla u|^{p-1}+\varepsilon)^b}\, \frac{1}{\big(|x-y|+\varepsilon\big)^\gamma}\, dy&\leq C\,\left(1+\left(\inf\limits_E u\right)^{1-q}\right)\\
&+\frac{\left(\inf\limits_E u\right)^{q-1}}{2}\,\int_\Omega \frac{1}{(|\nabla u|^{p-1}+\varepsilon)^b}\, \frac{\phi}{\big(|x-y|+\varepsilon\big)^\gamma}\,dy,
\end{split}
\]
upon renaming the constant $C=C(N,p,q_0,\alpha,b,\gamma,\Omega,\mathrm{dist}(E,\partial\Omega))>0$.
The term on the right-hand side can now be absorbed in the left-hand side.  
Since $\phi=1$ on $K$, this implies the desired result upon letting $\varepsilon$ go to $0$ and using Fatou's Lemma.
\end{proof}
\subsection{Negative integrability} We are finally ready for the main result of this section. Again, we will only treat the case $N\ge3$ in detail.
\begin{teo}
\label{teo:sweden}
Let $2<p<q_0<p^*$ and let $\Omega\subset\mathbb{R}^N$ be an open bounded connected set, with boundary of class $C^{1,\alpha}$, for some $0<\alpha<1$. For every $p\le q\le q_0$, let $u_q\in W^{1,p}_0(\Omega)$ be a positive minimizer of \eqref{eq:pq}. Then for 
\[
\left\{\begin{array}{cc}
\gamma<N-2,& \mbox{ if }N\ge 3,\\
\gamma\leq 0,& \mbox{ if } N=2,
\end{array}
\right.
\] 
and every $r<p-1$, there exists $\mathcal{S}=\mathcal{S}(\alpha,N,p,q_0,\Omega,r,\gamma)>0$ such that
$$
\sup_{x\in\Omega}\int_\Omega \frac{1}{|\nabla u_q(y)|^r\,|y-x|^\gamma} \,dy \le \mathcal{S}.
$$
In particular, we also have
\begin{equation}
\label{DS}
\int_\Omega \frac{1}{|\nabla u_q(y)|^r} \,dy \le \widetilde{\mathcal{S}},
\end{equation}
for some $\widetilde{\mathcal{S}}=\widetilde{\mathcal{S}}(\alpha,N,p,q_0,\Omega,r)>0$.
\end{teo}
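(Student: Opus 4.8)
The plan is to prove the pointwise (in $x$) estimate by splitting $\Omega$ into the boundary collar $\Omega_\delta$, where the uniform lower bound on $|\nabla u_q|$ renders the integrand harmless, and the interior region $\Omega\setminus\Omega_\delta$, where $u_q$ is bounded away from $0$ so that Proposition \ref{prop:singest} applies. First I would dispose of the easy cases $\gamma<0$ and $r\le 0$: if $\gamma<0$ the kernel $|x-y|^{-\gamma}$ is bounded on $\Omega$, and if $r\le 0$ then $|\nabla u_q|^{-r}\le L^{-r}$ by the uniform $C^1$ bound of Theorem \ref{teo:uniformi}; in either situation the claim follows from the elementary bound $\int_\Omega |x-y|^{-\gamma}\,dy\le \frac{N\,\omega_N}{N-\gamma}\,(\mathrm{diam}\,\Omega)^{N-\gamma}$, valid since $\gamma<N$ (which holds because $\gamma<N-2$ for $N\ge 3$ and $\gamma\le 0$ for $N=2$). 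So one may assume $0\le r<p-1$ and $0\le\gamma<N$.

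Next, for the collar: let $\delta=\delta(\alpha,N,p,q_0,\Omega)>0$ and $\mu_0>0$ be the constants of Theorem \ref{teo:uniformi}, so that $|\nabla u_q|\ge\mu_0$ on $\Omega_\delta$ for every $q\in[p,q_0]$. Then
\[
\sup_{x\in\Omega}\int_{\Omega_\delta}\frac{dy}{|\nabla u_q(y)|^r\,|x-y|^\gamma}\le \mu_0^{-r}\,\sup_{x\in\Omega}\int_\Omega\frac{dy}{|x-y|^\gamma}\le \mu_0^{-r}\,\frac{N\,\omega_N}{N-\gamma}\,\big(\mathrm{diam}(\Omega)\big)^{N-\gamma},
\]
a constant of the desired type. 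For the interior I would set $K=\Omega\setminus\Omega_\delta$ and $E=\Omega\setminus\Omega_{\delta/2}$, which satisfy $K\Subset E\Subset\Omega$ with $\mathrm{dist}(K,\partial E)\ge\delta/2$ and $\mathrm{dist}(E,\partial\Omega)\ge\delta/2$. By Theorem \ref{teo:uniformi} together with the Remark following it, there is $\widetilde\mu_1=\widetilde\mu_1(\alpha,N,p,q_0,\Omega)>0$ with $\inf_E u_q\ge\widetilde\mu_1$ for all $q\in[p,q_0]$. Applying Proposition \ref{prop:singest} with $b=r/(p-1)<1$ (so that $(p-1)\,b=r$) gives
\[
\sup_{x\in\Omega}\int_{\Omega\setminus\Omega_\delta}\frac{dy}{|\nabla u_q(y)|^r\,|x-y|^\gamma}\le C_3\,\widetilde\mu_1^{\,1-q}\,\big(1+\widetilde\mu_1^{\,1-q}\big).
\]
Since $1-q<0$ for $q\in[p,q_0]$, the factor $\widetilde\mu_1^{\,1-q}$ is monotone in $q$, hence bounded by $\max\{\widetilde\mu_1^{\,1-p},\widetilde\mu_1^{\,1-q_0}\}$, while $C_3$ depends only on $N,p,q_0,\alpha,b,\gamma,\Omega$ and on $\delta$, that is, only on $\alpha,N,p,q_0,\Omega,r,\gamma$. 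Adding the two contributions yields the uniform bound $\mathcal S$. The ``in particular'' assertion \eqref{DS} then follows by choosing $\gamma=0$, an admissible value in every dimension $N\ge 2$, since $\int_\Omega|\nabla u_q|^{-r}\,dy\le\sup_{x\in\Omega}\int_\Omega|\nabla u_q(y)|^{-r}\,dy\le\mathcal S(\alpha,N,p,q_0,\Omega,r,0)=:\widetilde{\mathcal S}$.

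The hard part is not really here: all the genuinely analytic content — the Caccioppoli-type estimates for the linearized equation leading to the negative integrability of $|\nabla u_q|$ — is already encapsulated in Proposition \ref{prop:singest} (and in Corollary \ref{cor:dg2}, Proposition \ref{prop:firstsingest} behind it) and in the uniform $C^{1,\chi}$ and lower-bound estimates of Theorem \ref{teo:uniformi}. The only point demanding care is the uniformity bookkeeping: one must use precisely the collar width $\delta$ delivered by Theorem \ref{teo:uniformi}, so that both the gradient lower bound on $\Omega_\delta$ and the lower bound on $u_q$ on $\Omega\setminus\Omega_{\delta/2}$ hold with $q$-independent constants, and one must check that $(\inf_E u_q)^{1-q}$ remains bounded as $q$ ranges over $[p,q_0]$ — both of which are immediate once the right objects are invoked.
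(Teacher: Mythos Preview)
Your proposal is correct and follows essentially the same approach as the paper: split $\Omega$ into the collar $\Omega_\delta$ (handled via the uniform gradient lower bound from Theorem \ref{teo:uniformi} and the elementary kernel estimate) and the interior $\Omega\setminus\Omega_\delta$ (handled via Proposition \ref{prop:singest} with $K=\Omega\setminus\Omega_\delta$, $E=\Omega\setminus\Omega_{\delta/2}$, $b=r/(p-1)$, and the uniform lower bound on $u_q$ over $E$), then take $\gamma=0$ for \eqref{DS}. Your explicit reduction to $r\ge 0$, $\gamma\ge 0$ and your invocation of the Remark after Theorem \ref{teo:uniformi} to secure $\inf_E u_q\ge\widetilde\mu_1$ on the enlarged set $E=\Omega\setminus\Omega_{\delta/2}$ are, if anything, slightly more careful than the paper's own write-up.
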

\begin{proof}
This is \cite[Theorem 2.3]{DS}: as claimed, we just want to pay attention to the dependence of the constant $\mathcal{S}$ on the data. 
We start by observing that by Theorem \ref{teo:uniformi} and \eqref{kernel}, we have
\begin{equation}
\label{minusgamma}
\begin{split}
\int_{\Omega_\delta}\frac{1}{|\nabla u_q(y)|^r\,|y-x|^\gamma} \,dy&\le \frac{1}{\mu_0^r}\,\int_{\Omega_\delta} \frac{1}{|y-x|^\gamma}\,dy\le \frac{N\,\omega_N}{(N-\gamma)\,\mu_0^r}\, \Big(\mathrm{diam}(\Omega)\Big)^{N-\gamma}.
\end{split}
\end{equation}
Here $\delta$ and $\mu_0$ are as in the statement of Theorem \ref{teo:uniformi}.
Thus we have a uniform estimate, at least when integrating in a fixed neighborhood of the boundary.
\par
In order to prove a uniform estimate on 
$$
\int_{\Omega\setminus \Omega_\delta} \frac{1}{|\nabla u_q(y)|^r\,|y-x|^\gamma} \,dy ,\qquad \mbox{ for every } x\in\Omega,
$$
we apply Proposition \ref{prop:singest} with 
\[
E=\Omega\setminus \Omega_{\delta/2},\qquad  K=\Omega\setminus \Omega_{\delta},\qquad b=\frac{r}{p-1},
\] 
and with the constant $\mu_1$ provided by Theorem \ref{teo:uniformi}. This yields
$$
\int_{\Omega\setminus \Omega_\delta} \frac{1}{|\nabla u_q(y)|^r\,|y-x|^\gamma} \,dy\leq C_2\,\mu_1^{1-q}\left(1+\mu_1^{1-q}\right).
$$
By using that 
\[
\mu_1^{1-q}\le \max\left\{\mu_1^{1-p},\mu_1^{1-q_0}\right\},\qquad \mbox{ for } p\le q\le q_0,
\]
we get the desired estimate.
\par 
Finally, the estimate \eqref{DS} is an easy consequence of the previous one, it is sufficient to take $\gamma=0$.
\end{proof}


\begin{thebibliography}{100}

\bibitem{AY} Adimurthi, S. L. Yadava, An elementary proof of the uniqueness of positive radial solutions of a quasilinear Dirichlet problem, Arch. Rational Mech. Anal., {\bf 127} (1994), 219-229.

\bibitem{AH} W. Allegretto, Y. X. Huang, A Picone's identity for the $p-$Laplacian and applications, Nonlinear Anal. {\bf 32} (1998), 819--830. 

\bibitem{AFI} G. Anello, F. Faraci, A. Iannizzotto, On a problem of Huang concerning best constants in Sobolev embeddings, Ann. Mat. Pura Appl. (4), {\bf 194} (2015), 767--779.


\bibitem{ACF} C. A. Antonini, G. Ciraolo, A. Farina, Interior regularity results for inhomogeneous anisotropic quasilinear equations, preprint (2021), available at {\tt https://arxiv.org/abs/2112.09087}

\bibitem{BF} L. Brasco, G. Franzina, An overview on constrained critical points of Dirichlet integrals, Rend. Semin. Mat. Univ. Politec. Torino, {\bf 78} (2020), 7--50.

\bibitem{Brezis} H. Brezis, {\it Functional Analysis, Sobolev Spaces and Partial Differential Equations.}  Universitext, Springer, New York (2011).

\bibitem{BO} H. Brezis, L. Oswald, Remarks on sublinear elliptic equations, Nonlinear Anal., {\bf 10} (1986), 55--64.

\bibitem{CM} A. Cianchi, V. Maz'ya, Second-order two-sided estimates in nonlinear elliptic problems,
Arch. Ration. Mech. Anal., {\bf 229} (2018), 569--599.


\bibitem{CES} D. Castorina, P. Esposito, B. Sciunzi, Degenerate elliptic equations with singular nonlinearities. Calc. Var. Partial Differential Equations, {\bf 34} (2009), 279--306.


\bibitem{DGP} L. Damascelli, M. Grossi, F. Pacella, Qualitative properties of positive solutions of semilinear elliptic equations in symmetric domains via the maximum principle, Ann. Inst. H. Poincar\'e Anal. Non Lin\'eaire, {\bf 16} (1999), 631--652.

\bibitem{DS} L. Damascelli, B. Sciunzi, Regularity, monotonicity and symmetry of positive solutions of $m-$Laplace equations, J. Differential Equations, {\bf 206} (2004), 483--515.

\bibitem{Da1} E. N. Dancer, On the influence of domain shape on the existence of large solutions of some superlinear problems, Math. Ann., {\bf 285} (1989), 647--669. 

\bibitem{Da2} E. N. Dancer, The effect of domain shape on the number of positive solutions of certain nonlinear equations, J. Diff. Eq., {\bf 74} (1988), 120--156. 

\bibitem{DiSa}  J. I. D\'iaz, J. E. Sa\'a, Existence et unicit\'e de solutions positives pour certaines \'equations elliptiques quasilin\'eaires, C. R. Acad. Sci. Paris S\'er. I Math., {\bf 305} (1987), 521--524.

\bibitem{Er2} G. Ercole, Regularity results for the best-Sobolev-constant function, Ann. Mat. Pura Appl. (4), {\bf 194} (2015), 1381--1392.

\bibitem{Er2014} G. Ercole, Sign-definiteness of $q-$eigenfunctions for a super-linear $p-$Laplacian eigenvalue problem, Arch. Math., {\bf 103} (2014), 189--194. 

\bibitem{Er} G. Ercole, Absolute continuity of the best Sobolev constant of a bounded domain, J. Math. Anal. Appl., {\bf 404} (2013), 420--428.

\bibitem{GT} D. Gilbarg, N. Trudinger, {\it Elliptic partial differential equations of second order.
Second edition.} Grundlehren der Mathematischen Wissenschaften [Fundamental Principles of Mathematical Sciences], {\bf 224}. Springer-Verlag, Berlin, 1983. 

\bibitem{GM} U. Guarnotta, S. Mosconi, A general notion of uniform ellipticity and the regularity of the stress field for elliptic equations in divergence form, to appear on Anal. PDE (2022), available at {\tt https://arxiv.org/abs/2105.12546v3}

\bibitem{Kawohl} B. Kawohl, Symmetry results for functions yielding best constants in Sobolev-type inequalities, Discrete Contin. Dynam. Systems, {\bf 6} (2000), 683--690.

\bibitem{ka}
O.\ Kavian, Introduction \`a la th\'eorie des points critiques et applications aux probl\`emes elliptiques. (French) [Introduction to critical point theory and applications to elliptic problems] Mathématiques \& Applications (Berlin) [Mathematics \& Applications], {\bf 13}. Springer-Verlag, Paris, 1993.

\bibitem{KLP} B. Kawohl, M. Lucia, S. Prashanth, Simplicity of the principal eigenvalue for indefinite quasilinear problems, 
Adv. Differential Equations, {\bf 12} (2007), 407--434.


\bibitem{KL} M. K. Kwong, Y. Li, Uniqueness of radial solutions of semilinear elliptic equations, Trans. Amer. Math. Soc., {\bf 333} (1992), 339--363. 

\bibitem{He} A. Henrot, {\it Extremum problems for eigenvalues of elliptic operators}. Frontiers in Mathematics. Birkhauser Verlag, Basel, 2006.

\bibitem{HL} R. Hynd, E. Lindgren, Extremal functions for Morrey's inequality in convex domains, Math. Ann., {\bf 375} (2019),  1721--1743.

\bibitem{IO} T. Idogawa, M. Otani, The first eigenvalues of some abstract elliptic operators,  Funkcial. Ekvac., {\bf 38} (1995), 1--9.

\bibitem{Le}  G. Leoni, {\it A first course in Sobolev spaces. Second edition.} Graduate Studies in Mathematics, {\bf 181}. American Mathematical Society, Providence, RI, 2017

\bibitem{LL} E. H. Lieb, M. Loss, {\it Analysis. Second Edition.} Graduate Studies in Mathematics, {\bf 14}. American Mathematical Society, Providence, RI, 2001.

\bibitem{Lie88} G. Lieberman, Boundary regularity for solutions of degenerate elliptic equations, 
Nonlinear Anal., {\bf 12} (1988), 1203--1219.


\bibitem{Lin} C.-S. Lin, Uniqueness of least energy solutions to a semilinear elliptic equation in $\mathbb{R}^
2$, Manuscripta Math., {\bf 84} (1994), 13--19.

\bibitem{Lindqvist} P. Lindqvist, On the equation $\operatorname{div}(|\nabla u|^{p-2}\nabla u)+\lambda |u|^{p-2}u = 0$, Proc. Amer. Math. Soc., {\bf 109}
(1990), 157--164.

\bibitem{Maz} V. Maz'ya, {\it Sobolev spaces with applications to elliptic partial differential equations}. Second, revised and augmented edition. Grundlehren der Mathematischen Wissenschaften [Fundamental Principles of Mathematical Sciences], {\bf 342}. Springer, Heidelberg, 2011. 

\bibitem{MS} H. Mikaelyan, H. Shahgholian, Hopf's lemma for a class of singular/degenerate PDE-s,  Ann. Acad. Sci. Fenn. Math., {\bf 40} (2015), 475--484.

\bibitem{Naz} A. I. Nazarov, The one-dimensional character of an extremum point of the Friedrichs inequality in
spherical and plane layers, J. Math. Sci., {\bf 102} (2000), 4473--4486.

\bibitem{Ta} 
P. Tak\'a\v{c}, On the Fredholm Alternative for the $p-$Laplacian at the First Eigenvalue, Indiana Univ. Math. J., {\bf 51} (2002), 187-237.

\end{thebibliography}
\end{document}